\documentclass[dvipsnames,11pt,reqno,twoside,final]{amsart}
\usepackage[a4paper,left=3cm,right=3cm,top=3cm,bottom=3cm]{geometry}
\usepackage[T1]{fontenc}
\usepackage[utf8]{inputenc}
\usepackage{xcolor}
\usepackage{amssymb,mathtools}  
\usepackage{dsfont}
\usepackage{enumitem}
\usepackage{subcaption}
\usepackage{booktabs}

\usepackage{preamble/mhequ}

\usepackage{amsthm}

\newcounter{counterEnvMain}
\newcounter{counterEnvDefault}
\numberwithin{counterEnvDefault}{section}

\theoremstyle{plain}

\newtheorem{lemma}[counterEnvDefault]{Lemma}
\newtheorem*{lemma*}{Lemma}

\newtheorem{theorem}[counterEnvDefault]{Theorem}
\newtheorem*{theorem*}{Theorem}

\newtheorem{proposition}[counterEnvDefault]{Proposition}
\newtheorem*{proposition*}{Proposition}

\newtheorem{corollary}[counterEnvDefault]{Corollary}
\newtheorem*{corollary*}{Corollary}

\newtheorem*{assumption*}{Assumption}

\theoremstyle{definition}

\newtheorem*{exercise*}{Exercise}

\newtheorem{definition}[counterEnvDefault]{Definition}
\newtheorem*{definition*}{Definition}

\newtheorem{notation}[counterEnvDefault]{Notation}
\newtheorem*{notation*}{Definition}

\newtheorem{remark}[counterEnvDefault]{Remark}
\newtheorem*{remark*}{Remark}

\newtheorem*{claim*}{Claim}

\newtheorem*{assertion*}{Assertion}

\usepackage{microtype}
\renewcommand\epsilon\varepsilon

\usepackage{hyperref}
\definecolor{colorlinks}{RGB}{0, 24, 168}
\definecolor{colorcites}{RGB}{0, 138, 118}
\hypersetup{
    colorlinks=true,
    linkcolor=colorlinks,
    citecolor=colorcites,
    urlcolor=colorlinks,
    pdfborder={0 0 0}
}

\usepackage{xargs}
\usepackage[colorinlistoftodos,prependcaption,textsize=tiny]{todonotes}

\newcommandx\work[2][1=]{\todo[linecolor=RoyalBlue,backgroundcolor=RoyalBlue!25,bordercolor=RoyalBlue,#1]{\textsc{todo} #2}}
\newcommandx\comment[2][1=]{\todo[linecolor=OliveGreen,backgroundcolor=OliveGreen!25,bordercolor=OliveGreen,#1]{\textsc{comment} #2}}
\newcommandx\mistake[2][1=]{\todo[linecolor=red,backgroundcolor=red!25,bordercolor=red,#1]{\textsc{mistake} #2}}
\newcommandx\improve[2][1=]{\todo[linecolor=orange,backgroundcolor=orange!25,bordercolor=orange,#1]{\textsc{improve} #2}}
\newcommandx\change[2][1=]{\todo[linecolor=yellow,backgroundcolor=yellow!25,bordercolor=yellow,#1]{\textsc{change} #2}}
\newcommandx\mem[2][1=]{\todo[linecolor=orange,backgroundcolor=orange!25,bordercolor=orange,#1]{\textsc{mem} #2}}
\newcommandx\status[2][1=]{\todo[linecolor=Blue,backgroundcolor=Blue!25,bordercolor=Blue,#1]{\textsc{Status} #2}}

\newcommand{\eps}{\epsilon}

\newcommand{\sfw}{\mathsf{w}}
\newcommand{\e}{\mathrm{e}}
\newcommand{\om}{\omega}

\newcommand\ind[1]{\mathds{1}_{\{#1\}}}

\newcommand{\bbLN}{N\mathbb{L}_{\scriptscriptstyle\boxtimes}^d}

\newcommand\C{\mathbb C}
\newcommand\D{\mathbb D}

\renewcommand\P{\mathbb P}

\newcommand\R{\mathbb R}

\newcommand\T{\mathbb T}

\newcommand\Z{\mathbb Z}

\newcommand\bbC{\mathbb C}

\newcommand\bbN{\mathbb N}

\newcommand\bbR{\mathbb R}

\newcommand\bbT{\mathbb T}

\newcommand\bbZ{\mathbb Z}

\newcommand\calC{\mathcal C}

\newcommand\calG{\mathcal G}

\newcommand{\sfP}{\mathsf{P}}

\mathtoolsset{showonlyrefs}

\makeatletter
\@namedef{subjclassname@2020}{\textup{2020} Mathematics Subject Classification}
\makeatother
\microtypesetup{expansion=false}

\title[Analyticity in FK-percolation]{Uniform analyticity of local observables in FK-percolation and analyticity of the Ising spontaneous magnetisation}

\author{Lucas D'Alimonte}
\address{LPSM, Sorbonne Université}
\email{dalimonte@lpsm.paris}

\author{Loïc Gassmann}
\address{Université de Fribourg}
\email{loic.gassmann@unifr.ch}

\date{\today}
\keywords{%
    analyticity,
    FK-percolation, 
    cluster expansion, 
    Ising model,
    spontaneous magnetisation.
}

\begin{document}

\maketitle

\begin{abstract}
We prove that, in the FK-percolation model, the probabilities of local events are uniformly analytic in the percolation parameter $p$ under suitable mixing assumptions on the measure, and satisfy a uniform exponential growth bound. 
This result allows us to prove that the magnetisation of the Potts model is analytic in a suitable range of parameters, including the Ising case in all dimensions $d \geq 3$ in the whole supercritical regime. 
We also provide a proof of the analyticity of the susceptibility of the Potts model with $q$ colours, for any $q \geq 2$ in the whole subcritical interval.
Finally, we prove the analyticity of various quantities in the FK-percolation measure, including the multi-point and truncated multi-point connectivity probabilities.  
\end{abstract}

\setcounter{tocdepth}{1}
\tableofcontents

\section{Introduction}
\noindent
The question of the regularity of thermodynamic quantities in lattice spin models is considered to be fundamental, and has attracted a lot of interest since the systematic study of these models. 
Indeed, in the physics literature, these regularity properties are used to characterize the nature of the phase transition exhibited by these models, if any.
A parameter at which the analyticity of a thermodynamic quantity is broken is considered by the physicists to be a phase transition point, and further properties of the phase transition can be derived by the way in which the analyticity is broken (to the best knowledge of the authors, these ideas were systematically introduced by Paul Ehrenfest~\cite{ehrenfest}). 
It is thus of deep interest to understand those regularity properties, both away and at the transition points. 

\medskip
\noindent
Another more specific reason for considering the question of the analyticity properties of thermodynamic quantities away from transition points is the discovery by Griffiths~\cite{Griffiths_singularities} of an intriguing phenomenon, now known as \emph{Griffiths singularities}.
In this seminal work, a phenomenon leading to a breaking of analyticity of the magnetisation and the free energy in models \emph{away from the critical point} was identified, in the setting of Ising models with random Hamiltonians, or \emph{diluted magnets} models. 
In \emph{pure} models, however, one generally expects the absence of Griffiths singularities, and a strict equivalence between analyticity breaking and point of phase transition.

 \medskip
 \noindent
This article is concerned with establishing analyticity properties of some thermodynamic observables of the \emph{Potts model with $q$ colours} in $\Z^d$. 
It is one the most studied lattice spin models, and includes as a special case when $q=2$ the celebrated \emph{Ising model}. 
As mentioned previously, the question of the off-critical analyticity has attracted a lot of attention during the past century. 
We shall however try to review the most significant contributions to this problem, some of them being of very different nature. 

\medskip
\noindent
Let us first discuss results concerned with analyticity of the \emph{free energy} (sometimes referred to as the \emph{pressure}, up to a $\beta$ multiplicative factor).
The first range of results concerns \emph{perturbative arguments}.  
Convergent cluster expansions have been used to establish analyticity properties in perturbative regimes (that is, very high or very low temperature Ising models) in~\cite{analyticity_pressure_ising_perturbative_1,analyticity_pressure_ising_perturbative_2}.
These type of arguments have been refined to general Gibbsian fields (including the Potts model) at very high temperatures in~\cite{analyticity_pressure_general}. 
General arguments regarding \emph{complete analyticity} of lattice spin models were provided in~\cite{complete_analyticity_Dobrushin} to argue that, under a set of strong assumptions including a strong mixing property, the free energy is analytic in the temperature. 
In two dimensions, perturbative arguments have been used in~\cite{complete_analyticity_2D_Potts} to establish \emph{complete analyticity} of the Potts model with large $q$ in the whole subcritical regime.
Moreover, the Ising model in two dimensions is exactly solvable, and its free energy has been computed by Onsager~\cite{Onsager}. In particular, it is indeed an analytic function except at the critical point. 
On the side of non-perturbative arguments, general Lee-Yang techniques~\cite{Lee_yang_1,Lee_yang_2} have been successful in proving the analyticity of the free energy in the magnetic parameter $h$ in the region $\Re(h) > 0$. 
The picture has finally been completed in the case of the Ising model by Ott~\cite{Ott_Ising,Ott_RCM} who showed that the free energy of the Ising model can be analytically continued around each non-critical inverse temperature $\beta$.  
Finally, we also mention the recent~\cite{lucas_piet_XY} which establishes the analyticity of the free energy of the XY model in its disordered phase, as well as the analyticity of the free energy of the 2D Coulomb gas in the complement of the BKT phase of the Villain model in 2D. 

\medskip
\noindent 
One may also consider different thermodynamic quantities, such as the \emph{spontaneous magnetisation} or the \emph{mass} of the model. 
In that case, the picture is much less complete and almost no non-perturbative argument is known, except for the two-dimensional Ising model, for which the mass was computed in~\cite{2D_Ising_mass} and the magnetisation in~\cite{2D_Ising_mag}, again defining analytic functions away from criticality. 
Actually, the article~\cite{Araki_analyticity_corr_Ising} shows that all the averages of local functions under the extremal Ising measures in two dimensions are analytic away from the critical point, using algebraic methods related to the $C^*$-algebras theory. 

However, it has been increasingly realised that \emph{graphical representations} of spin models~\cite{DC_graphical} are of great interest to pinpoint finer properties of these models, and this is the approach that we shall use in this article. 
In that direction, the first results were obtained in the influential~\cite{kesten_analyticity}, in which Kesten proved the analyticity of the susceptibility $\chi$ and of the number of open clusters per vertex $\kappa$ in the full subcritical regime of Bernoulli percolation. 
Later on, analyticity of the \emph{mass} (or \emph{inverse correlation length}) of subcritical Bernoulli percolation was established in~\cite{Chayes_analyticity_corr_Bernoulli}.
We also mention the proof of the analyticity of the mass of the self-avoiding walk in the subcritical regime of~\cite{Ioffe_analyticity_SAW}.

In the Edwards--Sokal coupling of the Potts model, the magnetisation is related to the probability in the FK-percolation that the origin is contained in an infinite cluster. 
This probability is often called $\theta(p)$, when $p$ is the percolation parameter, and is non-zero in the \emph{supercritical} regime of percolation. 
The question of the analyticity of $\theta$ is thus of deep interest, and we note that it was actually raised by Kesten in~\cite{kesten_analyticity}. 
This question was answered in the case of Bernoulli percolation on $\Z^d$ in the work~\cite{PG_analyticity}, in which the analyticity of the $\theta$ function of Bernoulli percolation in the full supercritical regime is proved.
For Bernoulli percolation beyond the graph $\Z^d$, we may cite the very recent~\cite{martineau_panagiotis_locality}, that extends the precedent result to Bernoulli percolation on a much more general class of graphs of polynomial growth.
Finally we may mention the closely related work~\cite{panagiotis_severo_analyticity_GFF}, which proves analyticity of the $\theta$ function of yet a different model called the \emph{Gaussian free field percolation}.

\medskip
\noindent
In this article, we investigate these questions for a classical graphical representation of the Potts model called \emph{FK-percolation}, that generalizes Bernoulli percolation and the edges of which are no longer independent. 
The analyticity of some quantities of FK-percolation naturally implies similar results in the case of the Potts model by the so-called \emph{Edwards--Sokal coupling}.
In particular, we rigorously prove that:
\begin{enumerate}
\item The susceptibility of the Potts model is analytic in the whole subcritical regime, in any dimension, for any $q \geq 2$. 
\item The spontaneous magnetisation of the Ising model is analytic in dimension $d \geq 3$ in the whole supercritical regime, answering Kesten's question~\cite{kesten_analyticity} for the FK-percolation with $q=2$ (also known as the \emph{FK-Ising} model),
\item The multi-point connectivity probabilities of the FK-Ising measure are analytic in dimension $d \geq 3$ when $p \neq p_c$. 
\item Local observables of FK-percolation are uniformly analytic and have \emph{exponentially bounded growth} in a suitable range of parameters, that includes the subcritical regime for any $q\geq 1$, and the whole off-critical regime for $q=2$. 
\end{enumerate}
The method consists in refining the cluster expansion method developed in~\cite{Ott_Ising,Ott_RCM} to prove item (4). 
The proof of items (1), (2) and (3) is then done using a straightforward adaptation of~\cite{PG_analyticity}.

\subsection{Definition of the models}

In this work, we shall work on the graph $\Z^d$, with $d \geq 2$, endowed with its nearest-neighbour graph structure. 
The nearest-neighbour connectivity will be denoted by the symbol $\sim$.
For $n \in \Z_{\geq 1}$, we define the box $\Lambda_n \coloneqq [-n, n]^d \cap \Z^d$.

\subsubsection{The Potts model}

The Potts model is a celebrated example of lattice spin model (we refer to~\cite{HDC_lecture_notes} for an extensive review). 
In $\Z^d$, with $d\geq 1$, a formal definition is given below. First set an integer $q \geq 2$ (the number of colours).
The spin space will be the set $\{1, \dots, q\}$. Given an inverse temperature $\beta > 0$,  a finite set $\Lambda \subset \Z^d$, and a \emph{boundary condition} $\tau \in \{0,1, \dots, q\}^{\bbZ^d}$, define the probability measure:
\[
\mu^\tau_{\Lambda, \beta}[\sigma] = \frac{1}{\mathbf{Z}^{\tau}_{\Lambda, \beta}} \exp\big( \beta \sum_{x\sim y \atop x,y \in\Lambda} \ind{\sigma_x = \sigma_y} + \beta \sum_{x\sim y \atop x \in \Lambda, y \notin\Lambda} \ind{\sigma_x = \tau_y}\big).
\]
The constant $\mathbf{Z}^{\tau}_{\Lambda, \beta}$ is the unique number normalizing the measure to 1, and is called the \emph{partition function}. 
We will mostly be interested in \emph{monochromatic} measures. 
For $j\in \{0, 1, \dots, q\}$, abbreviate $\mathbf{j}$ for the boundary condition constant equal to $j$ on $\Z^d$. 
The measure $\mu^{\mathbf{0}}_{\Lambda, \beta}$ will be referred to as \emph{free} in the volume $\Lambda$, while $\mu^{\mathbf{j}}_{\Lambda, \beta}$ with $j \neq 0$ will be referred to as \emph{monochromatic} in volume $\Lambda$. 
It is classical (see ~\cite{HDC_lecture_notes}) that all the measures $\mu^{\mathbf{j}}_{\Lambda, \beta}$ admit limits when $\Lambda \rightarrow \Z^d$, provided a suitable growth condition on the volumes $\Lambda$ (the sequence of boxes $(\Lambda_n)_{n \geq 1}$ satisfies this growth condition). 
Those limits will be referred to as $\mu^{\mathbf{j}}_{\beta}$.

When $d \geq 2$, the Potts model in $\Z^d$ notoriously exhibits a \emph{phase transition} phenomenon. 
Define the \emph{spontaneous magnetisation}\footnote{The choice of $\mathbf{1}$ for the boundary condition matching the event $\{\sigma_0 = 1\}$ is conventional: by spin symmetry, this quantity is the same if one replaces the boundary condition by $\mathbf{j}$ and the considered event by $\{\sigma_0 = j\}$.} as
\[
m^*(\beta) \coloneqq \mu^{\mathbf{1}}_\beta[ \sigma_0 = 1 ] - \frac 1 q. 
\]
The \emph{critical parameter} is defined as $\beta_c \coloneqq \sup\{ \beta > 0, m^*(\beta) = 0 \}$.
Then, when $d \geq 2$, a celebrated and classical argument due to Peierls implies that for any $q \geq 2$, one has $\beta_c \in (0, +\infty)$. 

We will also be interested in the so called \emph{susceptibility function}, defined for any $\beta < \beta_c$ as follows:
\[
\chi_{\mathsf{Potts}}(\beta) \coloneqq \sum_{x \in \Z^d} \{ \mu^{\mathbf{1}}_\beta[ \sigma_0 = \sigma_x] - \tfrac 1 q \}.
\] 
When $\beta < \beta_c$, the susceptibility is finite in any dimension $d \geq 2$; this is a consequence of the deep results of~\cite{DCRT_sharpness}.

\subsubsection{The random-cluster model}

The Potts model is classically related to a percolation model called \emph{FK-percolation} (sometimes referred to as \emph{random-cluster model}), that we now define. 
Slightly abuse notation by writing $\Z^d \coloneqq (V(\Z^d), E(\Z^d))$, where $V(\Z^d)$ denotes the vertex set of $\Z^d$ and $E(\Z^d)$ the set of nearest-neighbour unoriented edges of $\Z^d$. 
A \emph{percolation configuration} is a configuration of $\{0,1\}^{E(\Z^d)}$. 
As previously, we start by defining the measure in finite volumes. Let $G= (V(G), E(G))$ be a finite subgraph of $\Z^d$.
For any two percolation configurations $\om, \xi \in \{0,1\}^{E(\Z^d)}$, define $k^\xi_G(\om)$ to be the number of vertex connected components (hereafter referred to as \emph{clusters}) of $\om_{|E(G)} \cup \xi_{|E(\Z^d)\setminus E(G)}$ intersecting $V(G)$. 
For any two parameters $p\in (0,1), q \geq 1$, define the random cluster measure in $G$ with boundary condition $\xi$ as follows: 
\begin{equation}\label{equ:def_RCM}
\phi^{\xi}_{G,p ,q}[\om] = \frac{1}{\mathbf{Z}^\xi_{G,p,q}} \left(\frac{p}{1-p} \right)^{|\om|} q^{k^\xi_G(\om)},
\end{equation}
where $\mathbf{Z}^\xi_{G,p,q}$ is a renormalisation function. Note that this function is a polynomial in $\frac{p}{1-p}$. As previously, when the boundary conditions are either \emph{free} (\emph{i.e.} $\xi \equiv 0$ on $\Z^d$) or \emph{wired} (\emph{i.e.} $\xi \equiv 1$ on $\Z^d$), the measures can be extended to the full lattice by taking appropriate limits $G \nearrow \Z^d$. 
Those measures will respectively be referred to as $\phi^0_{p,q}$ and $\phi^1_{p,q}$. 

Another type of boundary conditions will also be considered in this article. 
Define $\T_N$ to be the torus of radius $N$.
For a percolation configuration $\om \in \{0,1\}^{E(\T_N)}$, define $k_{\T_N}(\om)$ to be the number of open clusters of $\om$, where the connectivity notion is inherited from the torus connectivity.
Then, we define the FK-percolation measure with \emph{periodic} boundary conditions $\phi_{\T_N, p, q}$ by~\eqref{equ:def_RCM}, with $k_{\T_N}$ in place of $k^\xi_G$.

This model also undergoes a phase transition for the existence of an infinite cluster: indeed, first define 
\[
\theta(p) \coloneqq \phi_{p,q}^1[ 0 \leftrightarrow \infty],
\] 
where $\{0 \leftrightarrow \infty\}$ denotes the event that the cluster of 0 has infinite cardinality. 
Further define $p_c \coloneqq \sup\{p \in [0,1], \theta(p) = 0\}$. Then, when $d \geq 2$, the above-mentioned \emph{Peierls' argument} allows to prove that $p_c \in (0,1)$.

In this setting, the susceptibility is defined for $p<p_c$ as 
\[
\chi_{\mathsf{FK}}(p) \coloneqq \sum_{x\in\Z^d} \phi_{p,q}^1[0 \leftrightarrow x] = \phi^1_{p,q}[|\calC_0|],
\]
where $\calC_0$ denotes the connected component of 0.
Again, when $p < p_c$, the susceptibility is finite in any dimension $d \geq 2$ due to~\cite{DCRT_sharpness}.

Moreover, the \emph{multi-point connectivity functions} are defined as follows, for $p \in (0,1)$, $q \geq 1, k \in \Z_{\geq 2}$, and $x_1, \dots, x_k \in \Z^d$:
\begin{itemize}
\item $\tau_{p,q}(x_1, \dots, x_k) = \phi^1_{p,q}[x_1, \dots, x_k \text{ lie in the same cluster}]$,
\item $\tau_{p,q}^\mathsf{f}(x_1, \dots, x_k) = \phi^1_{p,q}[x_1, \dots, x_k \text{ lie in the same \emph{finite} cluster}]$.
\end{itemize} 

\subsubsection{Edwards--Sokal coupling}

It is well-known that the Potts model with $q$ colours at inverse temperature $\beta > 0$ and the random-cluster model with parameters $p(\beta) \coloneqq 1 - \e^{-\beta}$ and $q$ can be coupled through the so-called \emph{Edwards--Sokal coupling} (again we refer to~\cite{HDC_lecture_notes} for a more extensive background). 
In particular, it follows from the properties of the coupling that for any $G \subset \Z^d$ finite, 
\[
\mu^{\mathbf{1}}_{G, \beta, q}[ \sigma_0 = 1] = \frac 1 q + \frac{q-1}{q}\phi^1_{G,p(\beta),q}[0 \leftrightarrow \partial G ].
\]
Letting $G \nearrow \Z^d$, we obtain the following identity that relates the magnetisation of the Potts model and the function $\theta$ in the random-cluster model:
\begin{equation}\label{equ:edwards_sokal}
m^*(\beta) = \frac{q-1}{q}\theta(p(\beta)).
\end{equation}

\subsection{Exposition of the results}

As our result concerns different regimes simultaneously, it will be convenient to introduce the set of ``good'' parameters to which the results apply.
The reader might note that the following definition is extracted from~\cite{Ott_RCM}. 

\begin{definition}\label{def:good_parameters}
Define $\calG_{\mathsf{FK}}$ to be the set of triplets $(d,p,q)$ such that the FK measures in finite volumes $\phi_{\Lambda_n, p, q}$ satisfy the following hypotheses. 
\begin{enumerate}
\item \emph{Exponential weak mixing property}: for any $\alpha > 1$, there exists $c > 0$ such that for any $n \geq 0$ sufficiently large\footnote{$\mathrm{d}_{\mathsf{TV}}$ denotes the classical total variation distance between probability measures, and for any measure $\mu$, $\mu( \cdot_{|\Lambda_n})$ denotes the restriction of the measure $\mu$ to the box $\Lambda_n$.}, 
\[
\mathrm{d}_{\mathsf{TV}}( \phi^1_{\Lambda_{\alpha n}, p,q}( \cdot_{|\Lambda_n} ), \phi^0_{\Lambda_{\alpha n}, p,q}( \cdot_{|\Lambda_n} ) ) < \exp(-cn). 
\]
\item One of the two following conditions holds:
\begin{enumerate}
\item \textit{Exponential decay}: for any $\alpha > 1$, there exists $c>0$ such that for any $n \geq 0$ large enough, 
\[
\phi^1_{\Lambda_{\alpha n, p ,q}}[A_n] < \exp(-cn),
\]
where $A_n$ is the event that $\Lambda_n$ contains a cluster of diameter greater than $n/100$.
\item \textit{Local uniqueness}: for any $\alpha > 1$, there exists $c>0$ such that for any $n \geq 0$ large enough, 
\[
\sup_\eta \phi^\eta_{\Lambda_{\alpha n, p ,q}}[U_n^c] < \exp(-cn),
\]
where the local uniqueness event $U_n$ is the event that $\Lambda_n $ contains a \emph{crossing} cluster and all the other clusters have diameter less than $n/100$, where a crossing cluster is a cluster that connects all the sides of the box $\Lambda_n$. The event $U_n^c$ is the complement of $U_n$.
In the last property, the supremum is taken over all the boundary conditions on the box $\Lambda_{\alpha n}$. 
\end{enumerate}
\end{enumerate}
\end{definition}

The exact form of the events considered for hypothesis $2.$ is not so important, they are chosen in order to implement a classical renormalization scheme.

We are now ready to state our main results.
The key result concerns analyticity of local events in the FK measure. 
In the next statement, the notation $\phi_{\bbT_N, p+z, q}$ denotes the analytic extension of the finite-volume measure $\phi_{\bbT_N, p, q}$.
\emph{A priori}, this is an ill-defined object, as the complex partition function used to renormalise this measure could vanish and induce a singularity on the finite-volume probabilities\footnote{This is a key difference with the case of Bernoulli percolation, treated in \cite{PG_analyticity}. Indeed, in Bernoulli percolation, the probability of a local event is a polynomial in $p$, which directly defines an \emph{entire} function in $\C$. In the FK measure, the situation is more complex due to the presence of the partition function, and even defining those complex extensions in finite-volume requires some work.}. 
The statement has to be understood as ruling out this possibility in the considered $\delta$-neighbourhood of $p$. 

In the statement, we say that a function $F : \{0,1\}^{E(\bbZ^d)} \rightarrow \bbR$ is \emph{local} if, for any $\om \in \{0,1\}^{E(\bbZ^d)}$, the value of $F(\om)$ is determined by the restriction of $\om$ to a finite subset of $E(\bbZ^d)$ that we refer to as the \emph{support} of $F$. Observe that by definition, a local function is always bounded. 
For $\delta > 0$, let $\D_\delta(0) \coloneqq \{z \in \C, |z| < \delta \}$, and denote by $\vert \cdot \vert$ the cardinality of a set. 
\begin{theorem}\label{thm:local_analyticity}
Assume that $(d,p,q) \in \calG_{\mathsf{FK}}$. Then, there exists $\delta > 0$ such that:

\begin{enumerate}
\item For any \emph{local} function $F$, for any $z \in \C$ satisfying $|z|<\delta$, the following limit exists:
\[
\phi_{p + z, q}[F(\om)] \coloneqq \lim_{N\rightarrow \infty} \phi_{\T_N, p + z, q}[F(\om)] \in \C.
\]
Furthermore, the function 
\[ z \in \D_\delta(0) \mapsto \phi_{p + z, q}[F(\om)]\in \C \]
 is complex analytic.
\item 

There exists a sequence $c_\eps$, going to 0 when $\eps$ goes to 0, such that for any $|z| < \eps < \delta$ and any non-negative \emph{local} function $F$ which is not identically 0, 
\begin{equation}\label{equ:exp_bound_theorem}\tag{$\star$}
\left| \frac{\phi_{p + z, q}[F(\om)]}{\phi_{p, q}[F(\om)]} \right| \leq 2\exp(c_\eps |\mathsf{Supp}_F|),
\end{equation}
where $\mathsf{Supp}_F$ is the support of $F$.

 \end{enumerate}
\end{theorem}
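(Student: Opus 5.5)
The approach follows Ott's coarse-grained cluster expansion for the random-cluster model~\cite{Ott_RCM}, made uniform in the observable. We work on the torus $\T_N$, which has no boundary, so that every block of a renormalisation lattice plays the same role.

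\textbf{Step 1: coarse-graining.} Fix a scale $n$ large and tile $\T_N$ by boxes of side $n$. Call a box \emph{good} if the relevant event holds in the enlarged box $\Lambda_{\alpha n}$ around it. Under hypothesis 2(a) this event is the absence of large clusters; under 2(b) it is local uniqueness. By Definition~\ref{def:good_parameters}, a box is bad with probability at most $e^{-cn}$, uniformly in the boundary condition.

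Combining this with the exponential weak mixing property (1), we rewrite the weight $(\tfrac{p}{1-p})^{|\om|}q^{k(\om)}$ of $\phi_{\T_N,p,q}$ as a polymer gas, following~\cite{Ott_RCM}:
\[
\mathbf Z_{\T_N,p,q}=\Big(\prod_{\text{blocks}} \text{(good-block weight)}\Big)\sum_{\{\gamma_i\}\ \text{compatible}}\prod_i w_p(\gamma_i).
\]
Here the polymers $\gamma$ are connected sets of bad blocks, together with the correction terms coming from the mixing decoupling, and they satisfy $|w_p(\gamma)|\le e^{-c' n|\gamma|}$.

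\textbf{Step 2: complex extension.} Every weight is a ratio of finite polynomials in $\tfrac{p}{1-p}$ over a single block of side $\alpha n$. Choosing $\delta$ small depending on $n$, a perturbation $p\to p+z$ with $|z|<\delta$ changes each block weight by a factor $e^{O(\delta n^d)}$. Hence $|w_{p+z}(\gamma)|\le e^{-c''n|\gamma|}$ still holds once $\delta n^d\ll c'n$.

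The Kotecký–Preiss criterion then gives a convergent cluster expansion of $\log \mathbf Z_{\T_N,p+z,q}$. In particular the partition function does not vanish, so $\phi_{\T_N,p+z,q}$ is well defined and analytic on $\D_\delta(0)$. This step is the main obstacle. The delicate point is to make the polymer representation itself holomorphic in $z$: the good-block normalisations involve the finite-volume measures $\phi^\xi_{\Lambda_{\alpha n}}$, whose partition functions must be shown not to vanish for small $|z|$. This is what forces $\delta$ to depend on $n$. I would first fix $n$ from the real estimates, and then take $\delta$ small enough, by continuity, that all finitely many block-level partition functions stay in a disk around their real values.

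\textbf{Step 3: local observables.} For a local $F$ write
\[
\phi_{\T_N,p+z,q}[F]=\frac{\mathbf Z_{\T_N,p+z,q}[F]}{\mathbf Z_{\T_N,p+z,q}}.
\]
The numerator admits the same polymer representation, except that the blocks meeting $\mathsf{Supp}_F$ are modified. Taking the ratio of the two cluster expansions, all clusters not touching those blocks cancel. The log-ratio is then a sum over clusters hitting $\mathsf{Supp}_F$, and these sums converge absolutely and uniformly in $N$ by the exponential tails of the cluster expansion. Clusters that wrap around the torus contribute $O(e^{-cN})$, so the limit $N\to\infty$ exists. Since the convergence is uniform on $\D_\delta(0)$, Weierstrass' theorem gives analyticity, which proves item (1).

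For item (2), take $F\ge0$ and compare with the real point $p$. The polymer expansion yields
\[
\log\frac{\phi_{p+z}[F]}{\phi_p[F]} = \log\frac{\mathbf Z_{p+z}[F]}{\mathbf Z_p[F]}-\log\frac{\mathbf Z_{p+z}}{\mathbf Z_p}.
\]
Each term splits into contributions localised near $\mathsf{Supp}_F$ and cancelling bulk terms. Two effects control the localised part. First, each edge in the support has a direct weight ratio $\big|\tfrac{p+z}{p}\big|^{\cdot}$ or $\big|\tfrac{1-p-z}{1-p}\big|^{\cdot}$, which is at most $e^{O(\eps)}$ per edge. Second, the cluster-expansion differences between $z$ and $0$ are at most $O(\eps)$ per block, by Cauchy estimates on the analytic cluster weights. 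Positivity of $F$ is used so that $\mathbf Z_p[F]>0$. Since $F$ is a positive combination of indicators, the complex numerator is dominated edge-by-edge by $e^{c_\eps|\mathsf{Supp}_F|}$ times the real one. This gives $c_\eps\to0$. The factor $2$ absorbs the bounded number of boundary blocks and the finite-$N$ errors.
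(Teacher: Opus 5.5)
Your Step~3 has a genuine gap, and it is exactly the obstruction the paper is built to circumvent. For item~(2) you cluster-expand the numerator $\mathbf Z_{p+z}[F]$ as a single polymer gas, with activities modified near $\mathsf{Supp}_F$. You then claim every localised contribution to $\log(\phi_{p+z}[F]/\phi_p[F])$ is $O(\eps)$ per edge or block. That would give $|\log(\phi_{p+z}[F]/\phi_p[F])|\le c_\eps|\mathsf{Supp}_F|$, hence $\phi_{p+z}[F]\neq 0$ on a disk independent of $F$, which is false.

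A counterexample already exists at $q=1$, $p<p_c$, which lies in $\calG_{\mathsf{FK}}$. Put $u_0=p/(1-p)<1$. Let $F=\ind{\om\in B}$ on $m$ edges, where $B$ is the all-closed configuration together with $c_2=\lceil u_0^{-j}\rceil$ configurations having exactly $j$ open edges; $m=O(j)$ edges suffice. With $u=(p+z)/(1-p-z)$,
\[
\phi_{p+z}[F]=(1-p-z)^m\bigl(1+c_2u^j\bigr).
\]
This vanishes at $u=c_2^{-1/j}e^{i\pi/j}$, i.e.\ at some $|z|=O(1/j)=O(1/|\mathsf{Supp}_F|)$.

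So $\log \mathbf Z_{p+z}[F]$ has no expansion convergent on an $F$-uniform disk. Concretely, after normalising by $\phi_p[F]$, the activities of polymers of size $n$ touching the support are only bounded by something like $e^{-c_L n+C|\mathsf{Supp}_F|}$. This is not small for $n\lesssim|\mathsf{Supp}_F|$. The theorem asserts only an upper bound on the modulus, and that is all that holds. Your argument for item~(1) goes through the same log-ratio, so it cannot give a $\delta$ uniform in $F$ either.

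Your fallback, that the complex numerator is dominated edge-by-edge by $e^{c_\eps|\mathsf{Supp}_F|}$ times the real one, is the trivial Bernoulli argument. For $q>1$ the factor $\alpha_z^{|\om|}$ acts on every edge of $\T_N$, so edge-by-edge domination costs a factor exponential in the volume of $\T_N$. The bulk must be cancelled against $\mathbf Z_{p+z}$ before any domination can be done.

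The missing idea is the paper's weighted-sum representation (Lemma~\ref{lem:plus_minus_one_expansion}),
\[
\phi_{\T_N,p+z}[F]=\sum_{\{\bar\gamma\}\in\mathfrak I\cap I(\Delta)}G_z(\{\bar\gamma\})\,\frac{\Xi(\sfw_z^{\{\bar\gamma\}})}{\Xi(\sfw_z)}.
\]
It is obtained from the dependency-encoding measure $\P$ of Theorem~\ref{thm:dependency_encoding} by a $\pm1$ expansion of $\alpha_z^{|\om|}$ over $L$-blocks. Only the $F$-independent ratios $\Xi(\sfw_z^{\{\bar\gamma\}})/\Xi(\sfw_z)$ are cluster-expanded, and they are bounded by $e^{c_\eps|\mathsf{Tr}(\{\bar\gamma\})|}$. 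The $F$-dependent weights $G_z$ are never logged.

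Instead, $|G_z|$ is dominated termwise by non-negative weights $G^{\mathbf 1}_{p+\eps}$, obtained by replacing each $f_{z,x}(\om)$ with $f_{p+\eps,x}(\mathbf 1)$. Running the same expansion with the constant factor $\alpha_\eps^{|E(\T_N)|}$ in place of $\alpha_z^{|\om|}$ gives a representation of $\phi_{\T_N,p}[F]$ itself with these non-negative terms. There the real partition-function ratios are bounded below by $e^{-c_\eps|\mathsf{Tr}|}$. This yields $\sum|G_z|e^{-c_\eps|\mathsf{Tr}|}\le\phi_{\T_N,p}[F]$ (Lemma~\ref{lem:bound_G}).

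Traces with $|\mathsf{Tr}|>K|\Delta|$ are handled separately: the exponential tails of $\calC_{A\cup\Delta}$ are played against the finite-energy bound $\phi_{\T_N,p}[F]\ge e^{-c_{\mathsf{FE}}|\Delta|}$. The two pieces together produce the factor $2$.

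In this framework $z$ enters only through $\alpha_z$ under a fixed real measure. Hence the block-level non-vanishing problem you flag in Step~2 never arises, and your Step~1 is not how Ott's representation works. Finally, the limit $N\to\infty$ comes from Vitali's theorem, using convergence on the real axis plus the uniform bound, rather than from controlling wrapping clusters.
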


\begin{remark}[Example of ``good'' parameters]\label{rem:good_parameters}
The set $\calG_{\mathsf{FK}}$ includes the following triplets. 
\begin{itemize}
\item $d\geq 2$, $q \geq 1$ and $p < p_c$,
\item $d = 2$, $q \geq 1$ and $p > p_c$,
\item $d \geq 3$, $q=2$ and $p > p_c$,
\end{itemize}
The first point follows from the subcritical sharpness of FK-percolation, proved in~\cite{DCRT_sharpness}. The second point follows from the first point and duality. The third point is more subtle: the local uniqueness (item (2.b)) follows by combining the celebrated argument from Bodineau~\cite{bodineau} with the renormalisation tools developed by Pisztora \cite{Pisztora_CG} (see also~\cite{severo_slab_perco} for a more direct argument) whereas the mixing property (item (1)) is the main result of~\cite{DC_sup_sharpness_ising}. It is worth pointing out that one of the main ingredients of~\cite{DC_sup_sharpness_ising} is a renormalisation scheme that heavily relies on \cite{bodineau}. 

We note that it is expected that $\calG_{\mathsf{FK}}$ coincides with the set of \emph{off-critical} parameters for the FK measure, with property (2.a) (resp. (2.b)) of Definition~\ref{def:good_parameters} corresponding to subcritical (resp. supercritical) parameters. 
This deep conjecture still remains to be proved when $d \geq 3$ and $p > p_c$. Let us mention that proving item (2.b) corresponds to proving \emph{supercritical sharpness} and also that item (1) is not expected to follow directly from item (2.b) (see the open problem discussion in~\cite{DC_sup_sharpness_ising}).
Let us conclude this remark by mentioning that supercritical sharpness has been proved to hold in a handful of related models, such as Voronoi percolation~\cite{dembin_severo_sup_sharpness_voronoi} and level-set percolation of the discrete Gaussian free field~\cite{duminil_sup_sharpness_GFF}. 
Maybe more related to this work is the case of the discrete $\varphi^4$ model, treated in the recent~\cite{panis_supercritical_sharpness_phi_4}.
\end{remark}

On the side of the Potts model, we define $\calG_{\mathsf{Potts}}$ to be the set of parameters $(d,\beta, q)$ such that $(d, p(\beta), q) \in \calG_{\mathsf{FK}}$, where we recall that $p(\beta) = 1 - \e^{-\beta}$.
The previous theorem allows us to derive the following result, concerning the analyticity of the magnetisation of the Potts model and of the function $\theta$ in the FK measure.
We note that in the case $q=2$, it totally determines the domain of analyticity of $\theta$. As mentioned earlier, the question was originally raised by Kesten~\cite{kesten_analyticity} in the context of Bernoulli percolation. 

\begin{theorem}\label{thm:analyticity_magnetisation}
The following are true.
\begin{itemize}
\item Let $(d,p_0,q) \in \calG_{\mathsf{FK}}$. 
Then, the function $p \in (0,1) \mapsto \theta(p)$ admits an analytic extension around $p_0$. 
\item Let $(d,\beta_0 ,q) \in \calG_{\mathsf{Potts}}$. 
Then, the function $\beta > 0 \mapsto m^*(\beta)$ admits an analytic extension around $\beta_0$. 
\end{itemize}
\end{theorem}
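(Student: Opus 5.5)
Since $m^*(\beta)=\frac{q-1}{q}\theta(p(\beta))$ by~\eqref{equ:edwards_sokal}, and $\beta\mapsto p(\beta)=1-\e^{-\beta}$ is entire with $(d,\beta_0,q)\in\calG_{\mathsf{Potts}}$ meaning exactly $(d,p(\beta_0),q)\in\calG_{\mathsf{FK}}$, the second item follows from the first by composition. We therefore concentrate on $\theta$, following the strategy of~\cite{PG_analyticity}. In the subcritical case (2.a), $\theta\equiv 0$ on a neighbourhood of $p_0$, since $\calG_{\mathsf{FK}}$ is open around $p_0$ by the remark (or directly, $p_c$ is defined as a supremum and exponential decay forces $\theta(p_0)=0$ together with sharpness). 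Hence the claim is trivial there, and we assume local uniqueness (2.b). We write
\[
\theta(p)=1-\phi_{p,q}[|\calC_0|<\infty]=1-\sum_{n\geq 1}\phi_{p,q}[|\calC_0|=n],
\]
where the series converges for real $p$ since the cluster is a.s.\ finite or infinite. Each term $f_n(p)\coloneqq\phi_{p,q}[|\calC_0|=n]$ is the expectation of a local indicator, supported on the edges within distance $n$ of the origin, or more economically on the at most $C_d n$ edges touching a lattice animal of size $n$. By Theorem~\ref{thm:local_analyticity}(1), each $f_n$ extends analytically to $p_0+\D_\delta(0)$; we need to identify the limit $\phi_{p,q}$ with $\phi^1_{p,q}$ for real $p$, which holds since uniqueness of the infinite-volume measure follows from the mixing assumption (1).

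The plan is then to show that $\sum_n f_n$ converges uniformly on a small disc, whence the limit is analytic by Weierstrass' theorem. We decompose $\{|\calC_0|=n\}$ as a disjoint union over lattice animals $A\ni 0$ of size $n$ of the events $E_A=\{\calC_0=A\}$. Each $E_A$ is local with support the edge set $\overline{E}(A)$ of edges with an endpoint in $A$, and $|\overline E(A)|\leq 2d n$. By~\eqref{equ:exp_bound_theorem}, for $|z|<\eps$,
\[
|f_n(p_0+z)|\leq \sum_{A}|\phi_{p_0+z,q}[E_A]|\leq 2\e^{2d c_\eps n}\sum_A\phi_{p_0,q}[E_A]=2\e^{2dc_\eps n}\,\phi_{p_0,q}[|\calC_0|=n].
\]
Using \eqref{equ:exp_bound_theorem} separately for each $E_A$, rather than for the union, is what keeps the support of size $O(n)$ instead of $O(n^d)$.

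The key real input, and the main obstacle, is an exponential bound on finite clusters in the supercritical regime. Because $\partial A$ has size at least $c n^{(d-1)/d}$, we only get a stretched-exponential tail
\[
\phi_{p_0,q}[n\leq|\calC_0|<\infty]\leq \exp(-c n^{(d-1)/d}),
\]
which is derived from local uniqueness (2.b) via a Pisztora-type coarse-graining: a finite cluster of size $n$ must be surrounded by a surface of bad boxes. Such a tail is too weak against $\e^{2dc_\eps n}$. We therefore follow~\cite{PG_analyticity} more closely. Instead of expanding by the cluster itself, we expand by the \emph{outer boundary}: we write $\phi[|\calC_0|<\infty]$ as a sum over "interfaces" $\Gamma$ separating $0$ from $\infty$, chosen as the closed edges of the exterior boundary of $\calC_0$ together with a minimal coarse-grained witness. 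Each event $\{\Gamma\text{ is the interface}\}$ is local with support of size $O(|\Gamma|)$, and local uniqueness gives
\[
\phi_{p_0,q}[\Gamma\text{ is the interface}]\leq \e^{-c|\Gamma|}.
\]
This is an exponential decay in the support size, which the Peierls count $C^{|\Gamma|}$ and the factor $\e^{Cc_\eps|\Gamma|}$ can beat once $\eps$ is small, since $c_\eps\to 0$.

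Implementing this requires the renormalised version of the argument of~\cite{PG_analyticity}. There the interface is the boundary of the connected component of "good" blocks at scale $L$, and the probability that a given coarse surface of $k$ blocks is entirely bad is at most $\e^{-c(L)k}$ with $c(L)\to\infty$, by (2.b) and a domain Markov/finite-energy argument. The corresponding events are local with support $O(L^d k)$. Choosing first $L$ large, then $\eps$ small so that $L^d c_\eps\ll c(L)$, makes the series converge uniformly on $p_0+\D_\eps(0)$. Once this is done, $\theta$ is a uniform limit of analytic functions there, which is the desired extension.
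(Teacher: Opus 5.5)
Your overall architecture matches the paper's. You reduce $m^*$ to $\theta$ by \eqref{equ:edwards_sokal} and dispose of the subcritical case. In the local-uniqueness regime you then expand $1-\theta$ over coarse-grained separating objects of exponentially small probability, applying \eqref{equ:exp_bound_theorem} termwise and the Weierstrass M-test.

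The gap is a step you assert without proof: that $\phi_{p,q}[|\calC_0|<\infty]$ is a sum, over interfaces $\Gamma$, of probabilities of events that are both mutually exclusive and local with support $O(|\Gamma|)$. As stated, this fails in both directions.

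\emph{Canonical choices are not local.} Take $\Gamma$ to be the exterior boundary of $\calC_0$, the boundary of \emph{the} good-block component, or a ``minimal'' witness. Then the event $\{\Gamma\text{ is the interface}\}$ depends on everything enclosed by $\Gamma$. You must check that $\calC_0$ actually reaches each edge of $\Gamma$, or that every block of the component is good, or that no smaller witness sits inside. Its support is therefore of the order of the enclosed volume, which can be as large as $|\Gamma|^{d/(d-1)}$. Then \eqref{equ:exp_bound_theorem} only gives a factor $\exp(c_\eps|\Gamma|^{d/(d-1)})$, which $\e^{-c|\Gamma|}$ cannot absorb. This is exactly the obstruction you correctly identified for the events $\{\calC_0=A\}$.

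\emph{Local choices are not exclusive.} Events that genuinely are local with linear support, such as ``$S$ is a separating component'' in the sense of \cite{PG_analyticity}, can occur together: several nested separating components can occur in the same configuration. So their probabilities do not add up to $1-\theta$.

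The missing idea is the inclusion--exclusion device of \cite{PG_analyticity} that the paper imports.
\begin{itemize}
\item First isolate the local event $D_N$ that the cluster of $0$ has diameter less than $N/5$; no bad surface need surround such a cluster.
\item Express $1-\theta(p)$ through $\phi_{p,q}[D_N]$ and $\phi_{p,q}[\text{some separating component occurs},D_N^c]$ (Claim C).
\item Separating components that occur simultaneously are pairwise disjoint. Hence the second probability equals
\[
\sum_{n\ge 1}\sum_{S\in MS^N_n}(-1)^{c(S)+1}\phi_{p,q}[S\text{ occurs},D_N^c],
\]
a sum over collections of pairwise disjoint coarse-grained connected sets.
\item Each term is local with support $CN^d|S|$ (Claim A), so \eqref{equ:exp_bound_theorem} applies termwise.
\item Uniform absolute convergence for $|z|<\eps$ follows from the collection-level Peierls bound $\sum_{S\in MS^N_n}\phi_{p,q}[S\text{ occurs}]\le \e^{-tn}$ (Claim B). This comes from the uniformity in boundary conditions of (2.b), and is the form your single-surface estimate must take here.
\end{itemize}
Without this step, or another device reconciling an exact decomposition with linear locality, your series is not shown to represent $\theta$, so the proposal does not yet produce the analytic extension.
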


We highlight one important consequence of the result concerning the Ising model.

\begin{corollary}
For any $d \geq 2$, the spontaneous magnetisation of the Ising model $ m^*$ is real analytic on $\R_{> 0}\setminus\{\beta_c\}$.
\end{corollary}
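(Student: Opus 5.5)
We deduce the corollary from Theorem~\ref{thm:analyticity_magnetisation} with $q=2$, splitting $\R_{>0}\setminus\{\beta_c\}$ into the sub- and supercritical intervals and treating $d=2$ separately from $d\ge3$. Since real analyticity is local, it suffices to show that every $\beta_0\neq\beta_c$ has an analytic extension of $m^*$ in a neighbourhood.

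\textbf{Subcritical side.} For $\beta_0<\beta_c$ no appeal to the theorem is needed. By definition of $\beta_c$ as a supremum, and since $m^*$ is non-decreasing in $\beta$ (monotonicity of $\theta$ via the Edwards--Sokal identity~\eqref{equ:edwards_sokal}), we have $m^*\equiv 0$ on $(0,\beta_c)$. The zero function is trivially analytic there.

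\textbf{Supercritical side.} Let $\beta_0>\beta_c$ and set $p_0=p(\beta_0)=1-\e^{-\beta_0}$.
\begin{enumerate}
\item Since $\beta\mapsto p(\beta)$ is increasing with $p(\beta_c)=p_c$ for $q=2$ (via Edwards--Sokal), we get $p_0>p_c$.
\item By Remark~\ref{rem:good_parameters}, $(d,p_0,2)\in\calG_{\mathsf{FK}}$: for $d=2$ this is the second bullet (duality), and for $d\ge3$ it is the third bullet (Bodineau, Pisztora, and the mixing result of~\cite{DC_sup_sharpness_ising}).
\item Hence $(d,\beta_0,2)\in\calG_{\mathsf{Potts}}$, and the second item of Theorem~\ref{thm:analyticity_magnetisation} gives an analytic extension of $m^*$ around $\beta_0$.
\end{enumerate}
Alternatively, one can apply the first item to get analyticity of $\theta$ near $p_0$. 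Composing with the entire map $\beta\mapsto 1-\e^{-\beta}$ and using~\eqref{equ:edwards_sokal} then gives analyticity of $m^*$.

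Combining the two sides, the local analytic extensions show that $m^*$ is real analytic on $(0,\beta_c)\cup(\beta_c,\infty)$. The only potentially delicate point is the identification $p(\beta_c)=p_c$ together with the membership in $\calG_{\mathsf{FK}}$ throughout the whole supercritical regime for $d\ge3$. Both are taken from the cited literature as summarised in Remark~\ref{rem:good_parameters}, so no new obstacle arises.
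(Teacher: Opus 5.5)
Your proposal is correct and follows the paper's route. The corollary is an immediate consequence of Theorem~\ref{thm:analyticity_magnetisation} with $q=2$, since by Remark~\ref{rem:good_parameters} one has $(d,p,2)\in\calG_{\mathsf{FK}}$ for every $p\neq p_c$ and every $d\geq 2$. The subcritical side is trivial because $m^*\equiv 0$ there, exactly as noted at the start of the paper's proof of that theorem.
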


We emphasize that the result is new only for dimensions $d \geq 3$, since in two dimensions it is known since the work~\cite{2D_Ising_mag}.
Our next result regards the analyticity of the susceptibility function of the random-cluster model. 

\begin{theorem}\label{thm:analyticity_susceptibility}
The following are true.
\begin{itemize}
\item For any $d\geq 1$, any $q \in \Z_{>1}$, any $\beta_0 < \beta_c$, the function $\beta \geq 0 \mapsto \chi_{\mathsf{Potts}}(\beta)$ admits an analytic extension around $\beta_0$. 
\item For any $d\geq 1$, any $q \geq 1$, any $p_0 < p_c$, the function $p \in (0,1) \mapsto \chi_{\mathsf{FK}}(p)$ admits an analytic extension around $p_0$. 
\end{itemize} 
\end{theorem}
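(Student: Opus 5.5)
We need to prove Theorem~\ref{thm:analyticity_susceptibility}: analyticity of $\chi_{\mathsf{FK}}$ for $p_0<p_c$, and of $\chi_{\mathsf{Potts}}$ for $\beta_0<\beta_c$. The plan is to apply Theorem~\ref{thm:local_analyticity}, with Remark~\ref{rem:good_parameters} guaranteeing $(d,p_0,q)\in\calG_{\mathsf{FK}}$, to each term of $\chi_{\mathsf{FK}}(p)=\sum_x \phi_{p,q}[0\leftrightarrow x]$. The issue is that $\{0\leftrightarrow x\}$ is not local, so we work with truncations. For $L\geq 1$, let $E_L(x)$ be the event that $0$ is connected to $x$ inside $\Lambda_L$, which is local with support of size $O(L^d)$. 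The proof then has four steps.

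\textbf{Step 1: the truncated series.} Fix $|x|$ and pick $L=L(x)=C|x|$ with $C$ large. Set
\[
f_x(z)\coloneqq\phi_{p_0+z,q}[\ind{E_{L(x)}(x)}].
\]
By Theorem~\ref{thm:local_analyticity}(1), $f_x$ is analytic on $\D_\delta(0)$. By (2) applied to $F=\ind{E_L(x)}$, for $|z|<\eps$,
\[
|f_x(z)|\le 2e^{c_\eps C'|x|^d}\,\phi_{p_0,q}[E_L(x)]\le 2e^{c_\eps C'|x|^d}e^{-c|x|},
\]
using exponential decay of connectivities for $p_0<p_c$ from~\cite{DCRT_sharpness}.

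\textbf{Step 2: the main obstacle.} The support grows like $|x|^d$, while the decay is only like $|x|$, so the bound above does not sum. I would fix this with a better local event whose support is linear in $|x|$. Instead of $E_L(x)$, decompose $\{0\leftrightarrow x\}$ according to the cluster of $0$. Partition over finite connected edge sets $S\ni 0,x$, and take the local events
\[
\{\calC_0 \text{ has edge set } S\},
\]
whose support is $S$ together with its edge boundary, of size $O(|S|)$. Subcritically, $\phi_{p_0,q}[|\calC_0|\ge n]\le e^{-cn}$ (again by~\cite{DCRT_sharpness}, via sharpness and a standard argument). Then
\[
\sum_{x}\sum_{S}|\phi_{p_0+z,q}[\calC_0=S]|\,\ind{x\in S}\le \sum_{n} n\,2e^{c_\eps C n}\,\phi_{p_0,q}[|\calC_0|=n],
\]
which converges once $\eps$ is small enough that $c_\eps C<c$. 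Hence
\[
\chi_{\mathsf{FK}}(p_0+z)=\sum_S |S|\,\phi_{p_0+z,q}[\calC_0=S]
\]
is a uniformly convergent sum of analytic functions on $\D_\eps(0)$, and is therefore analytic. This $F$ is nonnegative, so (2) applies directly.

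\textbf{Step 3: identification on the real axis.} We must check that this series equals $\chi_{\mathsf{FK}}$ for real $p$ near $p_0$. Define $\phi_{p,q}$ for real $p$ as the torus limit. Since $(d,p,q)\in\calG_{\mathsf{FK}}$ for real $p$ near $p_0$, weak mixing plus uniqueness of the infinite-volume measure gives $\phi_{p,q}=\phi^1_{p,q}$. Moreover $\calC_0$ is almost surely finite, so the sum over $S$ recovers $\phi^1_{p,q}[|\calC_0|]$. If $\delta$ depends on $p_0$, we only need a neighbourhood, which is enough for local analyticity.

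\textbf{Step 4: the Potts case.} Since $\beta_0<\beta_c$, we have $p(\beta_0)<p_c$. By the Edwards--Sokal coupling,
\[
\mu^{\mathbf 1}_\beta[\sigma_0=\sigma_x]-\tfrac1q=\tfrac{q-1}{q}\,\phi_{p(\beta),q}[0\leftrightarrow x]
\]
in the uniqueness regime. So $\chi_{\mathsf{Potts}}(\beta)=\frac{q-1}{q}\chi_{\mathsf{FK}}(p(\beta))$, and composing with the entire map $\beta\mapsto 1-e^{-\beta}$ gives the result. The case $d=1$ is covered trivially or by the same argument, since subcritical exponential decay holds there too.
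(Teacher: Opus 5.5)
Your proof is correct and follows essentially the same route as the paper. The argument that matters is your Step 2: decompose $\chi_{\mathsf{FK}}$ over the realisation of $\calC_0$, apply item (2) of Theorem~\ref{thm:local_analyticity} to these local events with support at most $2dn$, use the exponential tail of $|\calC_0|$ and the Weierstrass M-test, then pass to Potts via Edwards--Sokal. Step 1 is a discarded false start, and Step 3 spells out the real-axis identification that the paper leaves implicit.

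Two small fixes are needed in Step 2. The paper indexes by vertex sets $C$; with your edge-set indexing, the weight in $\sum_S |S|\,\phi_{p_0+z,q}[\calC_0=S]$ must be the number of vertices of $S$, not edges. You also need to allow $S=\emptyset$ so that $\calC_0=\{0\}$ is counted.
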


Our last result concerns the analyticity of the multi-point connectivity probabilities in the FK measure.  

\begin{theorem}\label{thm:multi_point_correlations}
Let $(d,p_0,q) \in \calG_{\mathsf{FK}}$. 
For any $k \in \Z_{\geq 2}$, any points $x_1, \dots, x_k \in \Z^d$, the functions
\[ p \in (0,1) \mapsto \tau_{p,q}(x_1, \dots, x_k) \qquad \text{ and }\qquad p \in (0,1) \mapsto \tau^\mathsf{f}_{p,q}(x_1, \dots, x_k)  \] 
admit an analytic extension around $p_0$. 
\end{theorem}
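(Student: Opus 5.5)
We follow the strategy of~\cite{PG_analyticity} and write both connectivity functions as infinite series of local observables, controlled by Theorem~\ref{thm:local_analyticity}. For $\tau^{\mathsf f}$ we decompose according to the (finite) cluster containing $x_1,\dots,x_k$:
\[
\tau^{\mathsf f}_{p,q}(x_1,\dots,x_k)=\sum_{C}\phi_{p,q}\big[\calC_{x_1}=C\big],
\]
where $C$ ranges over finite connected sets of edges containing $x_1,\dots,x_k$ (or vertex sets, with the single-vertex convention when $k$ coincident points appear). Each event $\{\calC_{x_1}=C\}$ is local: it depends only on the edges of $C$ and its edge boundary $\Delta C$, a support of size $O(|C|)$. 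By Theorem~\ref{thm:local_analyticity}(1), each summand extends analytically to $z\in\D_\delta(0)$. Since $(d,p_0,q)\in\calG_{\mathsf{FK}}$, the periodic limit coincides with the relevant infinite-volume measure. In the supercritical regime this uses that uniqueness of the infinite-volume measure follows from the mixing property; we take this as known from~\cite{Ott_RCM} or as part of the setup. By (2) of the theorem, for $|z|<\eps$,
\[
\big|\phi_{p_0+z,q}[\calC_{x_1}=C]\big|\le 2\e^{c_\eps C_d|C|}\,\phi_{p_0,q}[\calC_{x_1}=C].
\]
We group the sum by $n=|C|$. The series converges uniformly on $\D_\eps(0)$, and hence defines an analytic function by Weierstrass, as soon as
\[
\phi_{p_0,q}\big[n\le|\calC_{x_1}|<\infty\big]\le \e^{-cn}
\]
for some $c>0$ independent of $\eps$, and $\eps$ is chosen so small that $c_\eps C_d<c$.

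The main obstacle is this exponential tail for finite clusters, and it is handled in two cases. In case (2.a) (subcritical regime) it is exponential decay of $|\calC_{x_1}|$, which follows from the sharpness~\cite{DCRT_sharpness}, or directly from iterating (2.a) together with the mixing. In case (2.b) the correct tail is of the form $\exp(-cn^{(d-1)/d})$, as in Bernoulli percolation, which does not beat $\e^{c_\eps n}$ directly. Following~\cite{PG_analyticity}, we therefore refine the decomposition. We sum not over clusters but over the outer boundary (the set of closed edges separating $\calC_{x_1}$ from infinity), or more precisely over a coarse-grained interface. The number of closed edges in $\Delta C$ is then comparable to the support, and by Pisztora's renormalisation~\cite{Pisztora_CG} fed by local uniqueness (2.b), the probability that the finite cluster has boundary of size $m$ decays like $\e^{-cm}$. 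We therefore rewrite
\[
\tau^{\mathsf f}=\sum_{\Gamma}\phi[\,\Gamma \text{ is the exterior boundary of } \calC_{x_1} \text{ and } x_1,\dots,x_k\leftrightarrow \text{inside}\,].
\]
Here the event inside $\Gamma$ is not local in a bounded-size sense, but it is supported on $\Gamma\cup\mathrm{int}(\Gamma)$. To bound it by $\e^{c_\eps|\Gamma|}$ rather than by $\e^{c_\eps|\mathrm{int}\Gamma|}$, we use the cancellation in~\cite{PG_analyticity}: we express the summand as a difference/ratio in which the interior contributions cancel, via the domain Markov property with wired/free data on $\Gamma$, and we apply the bound ($\star$) only to the boundary-determined part. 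I expect this adaptation, which requires the analytic extension to respect conditioning on $\Gamma$ (available from the cluster expansion in the proof of Theorem~\ref{thm:local_analyticity}), to be the delicate step.

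For $\tau$ we use
\[
\tau_{p,q}(x_1,\dots,x_k)=\phi_{p,q}[\text{all } x_i\leftrightarrow\infty]+\tau^{\mathsf f}_{p,q}(x_1,\dots,x_k).
\]
By inclusion--exclusion, the first term is a combination of $\phi[x_{i}\not\leftrightarrow\infty,\ i\in I]$ for subsets $I$. Each of these is a sum over disjoint families of finite clusters $(C_i)_{i\in I}$ (with equal clusters merged), and the family is again handled by the same boundary decomposition and tail bound. This yields an analytic extension of $\tau$ around $p_0$ as well. In the subcritical regime the first term vanishes identically and $\tau=\tau^{\mathsf f}$.
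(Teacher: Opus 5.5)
The regime (2.a) of your proposal is fine. There $\tau=\tau^{\mathsf f}$, each event $\{\calC_{x_1}=C\}$ is local with support $O(|C|)$, and the exponential tail of $|\calC_{x_1}|$ beats the factor $2\e^{c_\eps C_d|C|}$ from ($\star$), exactly as in the paper's proof of Theorem~\ref{thm:analyticity_susceptibility}.

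\textbf{The gap is in regime (2.b), at the step you flag as delicate.} Once you sum over exterior boundaries $\Gamma$, each summand is the event ``$\Gamma$ is the exterior boundary of $\calC_{x_1}$ and $x_1,\dots,x_k$ are connected inside''. This event is supported on $\Gamma\cup\mathrm{int}(\Gamma)$. So ($\star$) only gives a factor $\e^{c_\eps|\mathrm{int}(\Gamma)|}$, and $|\mathrm{int}(\Gamma)|$ can be of order $|\Gamma|^{d/(d-1)}$; no tail $\e^{-c|\Gamma|}$ compensates this.

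Your proposed cure, factorising through the domain Markov property and letting the interior contributions cancel, is not an available tool. Theorem~\ref{thm:local_analyticity} and its proof control only the torus-limit measure, and only through the size of the support. Nothing gives $|\phi^0_{\mathrm{int}(\Gamma),p_0+z}[E]|\le \e^{c_\eps|\Gamma|}\,\phi^0_{\mathrm{int}(\Gamma),p_0}[E]$ for an event $E$ whose support is the whole domain, such as ``the cluster of $x_1$ touches every edge of $\Gamma$ and contains the $x_i$''. A boundary-only bound of this kind would need a cluster expansion in arbitrary domains with free boundary conditions (the measure of Theorem~\ref{thm:dependency_encoding} lives on the torus), plus a genuine cancellation mechanism. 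That is a new theorem, not an adaptation.

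It is also not what \cite{PG_analyticity} does. Even for $q=1$, the interior factor is a polynomial of degree $|\mathrm{int}(\Gamma)|$, and its complex modulus is not controlled by $|\Gamma|$.

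Separately, the exponential tail you quote holds for the number of bad boxes of a coarse-grained interface (a Peierls bound fed by (2.b)). It does not hold for the number of fine-scale closed exterior edges, which the bad boxes do not control.

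\textbf{The missing idea is to arrange that no interior event ever appears.} Following \cite[Theorem 8.10]{PG_analyticity}, the paper coarse-grains at scale $N$ and works with \emph{separating components}: a $*$-connected set of bad boxes together with its boundary of good boxes.
\begin{itemize}
\item By construction, whether a given $S$ occurs is determined by the configuration inside the boxes of $S$. So it is a local event with support at most $CN^d|S|$ (Claim A).
\item Two distinct intersecting separating components cannot occur simultaneously. Inclusion--exclusion therefore writes the non-local quantities as local small-diameter terms plus alternating sums over families $\{S_1,\dots,S_k\}\in MS^N_n$ of pairwise disjoint components.
\item The Peierls estimate $\sum_{S\in MS^N_n}\phi_{p,q}[S\text{ occurs}]\le \e^{-tn}$ (Claim B) uses the uniformity of (2.b) in the boundary conditions in place of independence. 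It beats the factor $2\e^{c_\eps CN^d n}$ from ($\star$) once $\eps$ is small.
\item The Weierstrass M-test then gives the analytic extensions of $\tau$ and $\tau^{\mathsf f}$.
\end{itemize}
Your reduction of $\tau$ to the terms $\phi[x_i\not\leftrightarrow\infty,\ i\in I]$ is compatible with this. However, each of these terms, and the finite-cluster part, must be expanded over separating components rather than over finite clusters or fine-scale interfaces.
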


The identity $\mu^{\mathbf{1}}_{\beta, q}[\sigma_x = \sigma_y] = \phi^1_{p(\beta), q}[x \leftrightarrow y]$ given by the Edwards--Sokal coupling does not generalize easily for multi-point correlations of the Potts model. 
Nonetheless, in the case of the truncated two-points correlation function of the Ising model, we are able to get the following result\footnote{The slightly strange form of this observable is due to our definition of the Potts model. This observable corresponds to the classical two-points truncated Ising correlation $\langle \sigma_x;\sigma_y \rangle_\beta^+ = \langle \sigma_x\sigma_y \rangle_\beta^+ - \langle\sigma_x\rangle_\beta^+ \langle\sigma_y\rangle_\beta^+$.}: 
\begin{corollary}\label{cor:trunc_two_pointsl}
Let $d \geq 2$ and $\beta \neq \beta_c$. Then, for any $x,y \in \Z^d$, the Ising observable
\[ \beta > 0 \mapsto \langle \sigma_x;\sigma_y \rangle^+_\beta \coloneqq \mu^\mathbf{1}_{\beta, q=2} [ (2\sigma_x - 1)(2\sigma_y -1) ] -  \mu^\mathbf{1}_{\beta, q=2} [ (2\sigma_x - 1)]\mu^\mathbf{1}_{\beta, q=2} [ (2\sigma_y- 1)] \]
is analytic around $\beta$.
\end{corollary}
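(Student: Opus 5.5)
The plan is to express the truncated correlation entirely in terms of FK-percolation quantities already covered by Theorem~\ref{thm:analyticity_magnetisation} and Theorem~\ref{thm:multi_point_correlations}. We take $q=2$, so that $2\sigma_x-1\in\{\pm1\}$, and use the Edwards--Sokal coupling with $p=p(\beta)=1-\e^{-\beta}$.

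\textbf{Step 1: finite volume.} In a finite graph $G$ with monochromatic boundary condition $\mathbf{1}$, the coupling works as follows. Conditionally on the FK configuration $\om\sim\phi^1_{G,p,2}$, every cluster touching $\partial G$ receives colour $1$. Every other cluster receives an independent uniform colour. Write $s_x\coloneqq 2\sigma_x-1$. Conditionally on $\om$:
\begin{itemize}
\item $\mu[s_x s_y\mid\om]=1$ if $x\leftrightarrow y$, or if both $x\leftrightarrow\partial G$ and $y\leftrightarrow\partial G$ (in the wired graph these two events coincide);
\item $\mu[s_x s_y\mid\om]=0$ otherwise.
\end{itemize}
Similarly $\mu[s_x\mid\om]=\ind{x\leftrightarrow\partial G}$. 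Hence
\[
\mu^{\mathbf 1}_{G,\beta,2}[s_xs_y]=\phi^1_{G,p,2}[x\leftrightarrow y \text{ in } \om\cup\text{wired boundary}],\qquad \mu^{\mathbf 1}_{G,\beta,2}[s_x]=\phi^1_{G,p,2}[x\leftrightarrow\partial G].
\]

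\textbf{Step 2: infinite volume.} Let $G=\Lambda_n\nearrow\Z^d$. The spin side converges to $\mu^{\mathbf 1}_\beta$. For the one-point function, the decreasing events $\{x\leftrightarrow\partial\Lambda_n\}$ combined with weak convergence of $\phi^1_{\Lambda_n,p,2}$ give $\phi^1_{p,2}[x\leftrightarrow\infty]=\theta(p)$; this is the same standard argument as in~\eqref{equ:edwards_sokal}.

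For the two-point function, we split
\[
\{x\leftrightarrow y \text{ via the wired boundary}\}=\{x\leftrightarrow y \text{ inside } \Lambda_n\}\cup\{x\leftrightarrow\partial\Lambda_n,\ y\leftrightarrow\partial\Lambda_n\}.
\]
In the limit this becomes $\{x\leftrightarrow y\}\cup\{x\leftrightarrow\infty,\ y\leftrightarrow\infty\}$. Under $\phi^1_{p,2}$ the infinite cluster is almost surely unique, by the Burton--Keane argument (translation invariance and finite energy). Therefore the second event is contained in the first up to a null set, and the limit is $\tau_{p,2}(x,y)$. Using translation invariance of $\phi^1_{p,2}$ for the one-point functions, we conclude
\[
\langle\sigma_x;\sigma_y\rangle^+_\beta=\tau_{p(\beta),2}(x,y)-\theta(p(\beta))^2 .
\]
The only delicate point in this step is justifying the exchange of limits for the non-local events. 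This is routine, since it is exactly the same monotone-limit argument used for $\theta$.

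\textbf{Step 3: analyticity.} Since $p(\beta_c)=p_c$ and $\beta\mapsto p(\beta)$ is increasing, $\beta\neq\beta_c$ means $p(\beta)\neq p_c$. By Remark~\ref{rem:good_parameters}, with $q=2$, we then have $(d,p(\beta),2)\in\calG_{\mathsf{FK}}$ for every $d\geq2$:
\begin{itemize}
\item subcritical in any $d\geq 2$;
\item supercritical for $d=2$ by duality;
\item supercritical for $d\ge3$ by Bodineau together with Duminil-Copin's mixing result.
\end{itemize}
Theorem~\ref{thm:multi_point_correlations} gives analyticity of $p\mapsto\tau_{p,2}(x,y)$ around $p(\beta)$. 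Theorem~\ref{thm:analyticity_magnetisation} gives analyticity of $\theta$ around $p(\beta)$; in the subcritical case this is trivial, as $\theta\equiv0$ there. Products and differences of analytic functions are analytic. Composing with the entire map $\beta\mapsto1-\e^{-\beta}$ gives analyticity of $\langle\sigma_x;\sigma_y\rangle^+$ around $\beta$.
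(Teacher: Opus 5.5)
Your proposal is correct and follows the paper's proof. The paper likewise uses the Edwards--Sokal coupling to write $\langle\sigma_x;\sigma_y\rangle^+_\beta=\phi_{p(\beta),2}[x\leftrightarrow y]-\theta(p(\beta))^2$ and then invokes Theorems~\ref{thm:analyticity_magnetisation} and~\ref{thm:multi_point_correlations}; you additionally supply the finite-volume coupling computation and the limit argument (monotonicity plus uniqueness of the infinite cluster) that the paper leaves implicit.
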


Again, the result is new only for dimensions $d \geq 3$, as the case $d=2$ was solved in~\cite{Araki_analyticity_corr_Ising}.
It is remarkable that the method used in the above-mentioned article is somehow totally orthogonal to the one used in this work. 

\begin{proof}
Let $d \geq 2$, $q=2$, and $\beta \neq \beta_c(q)$, so that $(d,\beta, q)\in \mathcal G_{\mathsf{Potts}}$. 
The Edwards--Sokal coupling implies that:
\[ \langle \sigma_x;\sigma_y \rangle^+_\beta = \phi_{p(\beta), 2}[ x \leftrightarrow y ] -  \theta(p(\beta))^2. \]
Both of those terms are analytically extendable around $\beta$ by Theorems~\ref{thm:analyticity_magnetisation} and~\ref{thm:multi_point_correlations}
\end{proof}

\begin{remark}
As previously mentioned, the generalization of this result to truncated multi-point correlations, that would allow \emph{e.g.} the proof of the analyticity of the so-called \emph{Ursell functions} of the Ising model is complicated by the fact that the events appearing in the Edwards--Sokal become more convoluted than ``straightforward'' connectivity events such as described by the functions $\tau$ and $\tau^\mathsf{f}$. 
Nonetheless, one might expect that a generalization of the method of~\cite{PG_analyticity} that would allow to prove the analyticity of the probability of those events might still holds for the FK measure.
Another possible way to treat that problem would be to analyse the \emph{double random current measure} of the Ising model, which is much more tailored to encode the truncated correlation functions of the Ising model. 
In that case, very recent results on the mixing properties of this measure~\cite{ulrik_supercritical_RCurrM} might allow to successfully replicate our methods. 
In any case, we refrained to investigate in that direction to keep the paper relatively short. 
\end{remark}

\subsection{Discussion of the results and of the proof}\label{sec:remarks_statement}
\subsubsection{Comments on the statements}
We start with some comments concerning our main results. 

\begin{itemize}
\item The main innovation of this result is the control of the exponential growth of the probability of a local event under the complex measure given by~\eqref{equ:exp_bound_theorem}. 
Indeed let us mention that point (1) of Theorem~\ref{thm:local_analyticity} could be proved with a fairly simpler argument that the one we use in this paper. 
However, we will derive both items simultaneously from Proposition~\ref{prop:unif_bound_ratio_probas}. 
The bound~\eqref{equ:exp_bound_theorem} can be interpreted as a \emph{complex finite energy} property: when changing the percolation parameter by a small complex amount, the probability of a local event is only affected by a small complex amount per vertex present in its support.
In the Bernoulli case ($q=1$), this bound is actually trivial, as the probability of a local event is polynomial, and is the key ingredient in classical proofs of the analyticity of thermodynamic observables for this model (see~\cite{PG_analyticity}). 
Our result extends this locality property to the dependent case ($q > 1$) and may be seen as a generic tool to extend proofs of analyticity of thermodynamic observables from Bernoulli percolation to FK-percolation.

\item The reader might be worried about the specific choice of toroidal boundary conditions. 
However, this choice is arbitrary and convenient for the proofs.
As will be noted in the proof of Theorem~\ref{thm:local_analyticity}, there exists a unique infinite-volume measure around $p$ for this choice of parameters, and $\phi_{p+z, q}$ is the analytic extension of this infinite-volume measure. 
As such, one may \emph{a posteriori} replace the boundary conditions by free or wired in the box $\Lambda_N$, without affecting the value of $\phi_{p+z, q}$. 

\item While the Ising model is integrable in dimension 2 and the magnetisation explicitly computable in that setting~\cite{2D_Ising_mag}, this integrability property fails when $d \geq 3$.
In this sense, the analyticity property proved in this article is the best ``rigidity'' result one could expect in absence of an explicit computation.

\end{itemize}

\subsubsection{Overview of the proof and organisation}

As mentioned in the introduction, the proofs of Theorems~\ref{thm:analyticity_magnetisation},~\ref{thm:analyticity_susceptibility}, and~\ref{thm:multi_point_correlations} are fairly standard, once Theorem~\ref{thm:local_analyticity} is established. 
Indeed, the analyticity of the FK susceptibility follows from the fact that the cluster volume enjoys the \emph{Aizenman--Newman--Barsky} property (\emph{i.e.} exponential decay of the volume of the cluster of 0 in the subcritical phase).
The proof of the analyticity of $\theta$ in the supercritical regime is much more involved, as the Aizenman--Newman--Barsky property for finite clusters is no longer true in the supercritical regime.
However it was observed in~\cite{PG_analyticity} that a much more refined notion of~\emph{separating component} do enjoy this property, and that thanks to a combinatorially involved inclusion-exclusion principle, one may prove the analyticity of $\theta$, provided a good exponential control on the growth rate of the analytic extensions of the probabilities of local events. 
This strategy was then successfully implemented by these authors in the case of Bernoulli percolation (corresponding to $q=1$ in the random-cluster model). 
The conclusion of this discussion is that to extend these methods to FK-percolation, the crux is Theorem~\ref{thm:local_analyticity} and especially the bound given by~\eqref{equ:exp_bound_theorem}.
The results concerning the Potts model trivially follow from the Edwards--Sokal coupling. 
The proof of Theorem~\ref{thm:local_analyticity} is done in Section~\ref{sec:unif_analyticity}, while the standard proofs of Theorems~\ref{thm:analyticity_magnetisation},~\ref{thm:analyticity_susceptibility}, and~\ref{thm:multi_point_correlations} are sketched in Section~\ref{sec:mag_susc}.

For proving Theorem~\ref{thm:local_analyticity}, we rely on two main ingredients. 
The first one is the observation by Ott~\cite{Ott_RCM}, based on ideas coming from~\cite{sly_dependency} and~\cite{harel_spinka} that a construction based on the coupling from the past of the Glauber dynamics for FK-percolation allows to construct a~\emph{dependency percolation with exponentially small clusters}.
Following the strategy of~\cite{Ott_RCM}, we use this dependency percolation as a blackbox to apply cluster expansion methods. 

However, it turns out that one faces severe difficulties in implementing ``direct'' cluster expansion methods for local observables of the FK-percolation, and that any such attempt will produce a radius of analyticity that depends on the size of the support of the observable that one tries to expand (and that this radius will shrink to 0 when the size of the support increases). In particular, expanding observables as one single polymer partition function will lead to such a phenomenon, as illustrated in the proof of Theorem 2.4 in~\cite{Ott_RCM}, for instance.

Our main contribution is to overcome this difficulty. The method starts by expanding complex perturbations of local observables as \emph{weighted sums} of polymer partition functions instead of a single one. 
The main effort is then dedicated to control the complex modulus of the weight function to produce a global upper bound on the modulus of those complex perturbations.
The paper is organised as follows.

\begin{itemize}
\item Section~\ref{sec:unif_analyticity} is devoted to the proof of Theorem~\ref{thm:local_analyticity}.
\begin{itemize}
\item In Section~\ref{subsec:resumm_weighted}, the expansion as a weighted sum of polymer partition functions is performed. 
\item In Section~\ref{subsec:clust_exp_2}, we implement classical cluster expansion bounds on each one of those polymer partition functions.
\item In Section~\ref{subsec:exp_summability}, we analyse the weighting function and prove its ``exponentially resummability''. 
\item In Section~\ref{sec:proof_thm_local}, we use those tools to provide a proof of Theorem~\ref{thm:local_analyticity}.
\end{itemize}
\item Section~\ref{sec:mag_susc} makes use of Theorem~\ref{thm:local_analyticity} to prove both Theorems~\ref{thm:analyticity_magnetisation},~\ref{thm:analyticity_susceptibility}, and~\ref{thm:multi_point_correlations}.
\item Technical tools about the cluster expansion are deferred to the Appendix.
\end{itemize}

\section{Uniform analyticity of local observables of the FK measure}\label{sec:unif_analyticity}

The proof of Theorem~\ref{thm:local_analyticity} crucially relies on the concept of \emph{dependency encoding measure}, itself closely related to the notion of \emph{finite factor of i.i.d}~\cite{harel_spinka}. 
The existence of such a dependency encoding measure was proved in~\cite{Ott_Ising, Ott_RCM} respectively in the case of the Ising model and in the case of FK-percolation in a suitable range of parameters. 
In these articles, this dependency encoding measure is constructed based on the mixing properties of the model, combined with coupling from the past for its Glauber dynamics. 
In the next subsection, we directly import the main results of~\cite{Ott_RCM}, that will be relevant for our analysis. 

\subsection{Dependency encoding measure for FK-percolation: a recap of~\cite{Ott_RCM}}

Fix $N, L \in \Z_{> 0}$ such that $N$ is an integer multiple of $L$. 
As mentioned earlier, in the regime of parameters under consideration, all the infinite-volume limit measures are equal, and it will be convenient to work in the $d$-dimensional torus of radius $N$, denoted by $\T_N$. 
Since $q$ is a fixed parameter, we will often omit it in the notation.
Finally, we also introduce the $L$-\emph{coarse-grained torus} of size $N$, defined as $\T_N^L \coloneqq  ((2L)\Z)^d \cap \T_N$. 
This set inherits the graph structure of $\bbT_N$ by placing edges between vertices at distances $2L$ in $\T_N^L$.   
Finally define $\sfP^L_N \coloneqq  \{ \gamma \subset \T_N^L, \gamma \text{ connected and non-empty}\}$. 
The main result of~\cite{Ott_RCM} is then the following (in the statement, all the constants are uniform in $N$, once $L$ is chosen large enough and $L$ divides $N$). 

\begin{theorem}[Theorem 4.1,~\cite{Ott_RCM}]\label{thm:dependency_encoding}
Let $(d,p,q) \in \calG_{\mathsf{FK}}$. 
Then there exists $L_0 > 0$ such that for any $L \geq L_0$, there exists a probability measure $\P$ on $  \{0,1\}^{E(\T_N)} \times (\sfP^L_N)^{\T_N^L}$ satisfying the following properties.
When $(\om, (\calC_x)_{x \in \T_N^L})  \sim \P$, then:
\begin{enumerate}
\item $\om$ is distributed according to $\phi_{\bbT_N,p}$.
\item For any $x\in\T_N^L$, $\calC_x$ contains $x$. 
\item For any $\Delta_1, \Delta_2 \subset \T_N^L$, any $F,G$ functions of $\om$ supported on $\Delta_1$ and $\Delta_2$ respectively and any $C_1 \supset \Delta_1, C_2 \supset \Delta_2$ such that $C_1 \cap C_2 = \emptyset$, then
\[
\P[F(\om)G(\om) \ind{\calC_{\Delta_1} = C_1} \ind{\calC_{\Delta_2 = C_2}} ] = \P[F(\om)\ind{\calC_{\Delta_1} = C_1} ] \P[G(\om)\ind{\calC_{\Delta_2 = C_2}} ],
\]
where we call $\calC_\Delta \coloneqq  \bigcup_{x\in\Delta}\calC_x$. 
\item There exist two sequences $\tilde c_L, c_L > 0$ going to infinity when $L$ goes to infinity such that, for any $x \in \T_N^L$ and any $n \in \Z_{> 0}$, 
\[  \P[|\calC_x| \geq n] \leq \tilde c_L\exp(-c_L n). \]
\end{enumerate}
\end{theorem}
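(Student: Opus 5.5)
The plan is to build $\P$ as a \emph{finitary coding} of $\phi_{\T_N,p}$ by i.i.d.\ space-time randomness. The construction follows coupling from the past for the heat-bath Glauber dynamics of FK-percolation, as in~\cite{harel_spinka,sly_dependency}. We then take $\calC_x$ to be the set of coarse-grained blocks whose randomness is explored when the state of the block of $x$ is determined.

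\textbf{Set-up.} Attach to every edge $e\in E(\T_N)$ an independent Poisson clock on $(-\infty,0]$ with i.i.d.\ uniform marks. Group these variables into blocks indexed by $\T_N^L$ (the edges within distance $L$ of $y\in\T_N^L$), so that the randomness attached to disjoint sets of blocks is independent. The heat-bath dynamics is monotone for $q\geq 1$, so the grand coupling with maximal and minimal initial conditions sandwiches every other chain. By stationarity, the coalesced value at time $0$ is distributed as $\phi_{\T_N,p}$, which gives property (1).

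\textbf{Key local step.} Fix $y\in\T_N^L$. Run the maximal and minimal chains started at time $-T$ (with $T$ a large multiple of $L$) in the box $\Lambda_{\alpha L}(y)$, frozen to wired, respectively free, outside it. Call $y$ \emph{good} if the two chains agree on the block $\Lambda_L(y)$ at time $0$. The exponential weak mixing hypothesis (1) of Definition~\ref{def:good_parameters}, together with a standard mixing-time bound in a box of size $\alpha L$ (here (2.a) or (2.b) is used to control the time needed), gives
\[
\P[y \text{ is bad}]\leq \eps_L \quad\text{with } \eps_L\to 0 \text{ as } L\to\infty.
\]
The indicator that $y$ is good depends only on the randomness of the blocks within distance $\alpha$ of $y$, so goodness is a finite-range dependent field.

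\textbf{Definition of $\calC_x$ and properties (2)--(4).} Define $\calC_x$ as $\{x\}$ together with the $\alpha$-neighbourhood of the $*$-connected cluster of bad blocks touching the $\alpha$-neighbourhood of $x$. By monotonicity, whenever a closed surface of good blocks separates $x$ from the outside, the state on the block of $x$ is the common value of the sandwiching chains. That value is determined by the randomness inside the surface.

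The crucial point is that the event $\{\calC_\Delta=C\}$ is a \emph{stopping-set event}: it is determined by the randomness inside $C$, and on this event $F(\om)$ is a function of that randomness only. Given this, property (3) follows from the independence of the i.i.d.\ variables attached to the disjoint sets $C_1,C_2$. Property (2) holds by construction.

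For property (4), a Peierls-type count over lattice animals of bad blocks, combined with a Liggett--Schonmann--Stacey domination of the finite-range field, gives
\[
\P[|\calC_x|\geq n]\leq \tilde c_L\,(C_d\,\eps_L^{1/K})^{n}.
\]
So $c_L\approx K^{-1}\log(1/\eps_L)\to\infty$, uniformly in $N$.

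\textbf{Main obstacle.} I expect the hardest part to be reconciling two requirements. Coupling from the past needs information from arbitrarily far in the past and in space. Yet property (3) needs an \emph{exact} factorisation, which the stopping-set measurability has to deliver. I would first try to enforce this by defining $\calC_x$ through an exploration process that reveals the randomness block by block and stops at a good separating surface. The subtlety is to guarantee that the revealed state inside is independent of everything outside, and not just approximately so. This is where hypothesis (2) is needed: local uniqueness or exponential decay makes the boundary influence disappear exactly once the sandwiching chains coalesce on the separating surface.
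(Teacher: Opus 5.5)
The paper does not prove this theorem; it imports it from Ott, where $\P$ is built from coupling from the past for the FK Glauber dynamics together with the mixing hypotheses. Your overall plan (monotone CFTP, coarse-graining, a stopping-set property giving (3), a Peierls bound giving (4)) belongs to that family. The gap is the step everything else rests on: that a closed surface of good blocks around $x$ makes the state of the block of $x$ a function of the randomness inside the surface. With your notion of goodness this fails, for two independent reasons.

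\emph{Time.} Goodness of $y$ only says that the two extreme chains in $\Lambda_{\alpha L}(y)$ agree on $\Lambda_L(y)$ \emph{at time $0$}. It says nothing about the trajectory of block $y$ during $[-T,0]$, nor about the state of the bad interior blocks at time $-T$. The state of a bad block at time $0$ depends on both, and through them on the whole past. A single time window cannot capture this. The standard remedy is to coarse-grain in space-time: blocks are indexed by a spatial block and a time window, and goodness means coalescence of the local extreme chains on the whole window. You then trace the dependency cluster of the block of $x$ backwards in time, take $\calC_x$ to be a thickened spatial projection of it, and run the Peierls/Liggett--Schonmann--Stacey argument on that space-time field.

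\emph{Non-locality.} The FK heat-bath rule at an edge $e$ depends on whether the endpoints of $e$ are connected in $\om\setminus\{e\}$, which is a non-local function of $\om$. So even the full trajectory on a separating shell does not determine the interior evolution. Clusters crossing the shell may be joined outside it (for instance through the wired boundary in the top chain but not in the bottom one), and this changes the update probabilities inside.

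This second point is the natural role of hypothesis (2). Good blocks must also carry, throughout the window and for both extreme chains, a screening event: only small clusters under (2.a), local uniqueness under (2.b). Then connectivity questions for edges inside are decided locally. Your use of (2) to control a mixing time is misplaced. The bound $\P[y\text{ bad}]\le\eps_L$ already follows from the weak mixing hypothesis (1) alone, since on a finite state space $T=T(L)$ may be taken as large as needed. Likewise, (2) can make the boundary influence vanish ``exactly'' only on such an event, which has to be built into the definition of a good block.

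Even the bookkeeping for (3) fails as written. When no bad block is nearby your definition gives $\calC_x=\{x\}$. Yet the state of the block of $x$ depends on the randomness of the blocks within distance $\alpha$ of $x$, and the event $\{\calC_x=\{x\}\}$ depends on an even larger neighbourhood. For two adjacent such blocks, (3) would then assert exact independence of $\om$ on adjacent blocks. That is false for $q>1$: adjacent edges are positively correlated uniformly in $L$, while $\P[\calC_x\neq\{x\}]\to0$. So $\calC_x$ must contain all the randomness that is revealed, including the randomness certifying goodness of the boundary blocks.

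Since you leave the exact factorisation as the unresolved ``main obstacle'', the proposal does not establish properties (3) and (4).
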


We also import an easy combinatorial consequence of the fourth item of the latter result which will be useful for the proof of Theorem~\ref{thm:local_analyticity}.

\begin{lemma}[Corollary 4.2,~\cite{Ott_RCM}]\label{lem:exp_dec_cluster_sets}
Let the conditions of Theorem~\ref{thm:dependency_encoding} be satisfied. 
There exists a sequence $a_L > 0$ such that for any $\Delta \subset \T_N^L$ and any $n\in \Z_{> 0}$,
\[
\P[|\calC_\Delta| \geq n] \leq \exp(-c_L n + a_L|\Delta|)
\]
where $c_L$ was defined in Theorem \ref{thm:dependency_encoding}.
\end{lemma}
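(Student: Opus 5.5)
The plan is to reduce the tail of $|\calC_\Delta|$ to the single-site tails of item (4) of Theorem~\ref{thm:dependency_encoding}, paying only a factor exponential in $|\Delta|$ for combinatorics. The starting point is $\calC_\Delta = \bigcup_{x\in\Delta}\calC_x$, hence $|\calC_\Delta|\le \sum_{x\in\Delta}|\calC_x|$. So on $\{|\calC_\Delta|\ge n\}$ there is a vector $(n_x)_{x\in\Delta}$ of non-negative integers with $\sum_x n_x = n$ and $|\calC_x|\ge n_x$ for every $x$. The number of such vectors is $\binom{n+|\Delta|-1}{|\Delta|-1}\le 2^{n+|\Delta|}$. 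The factor $2^n$ is harmless: it can be absorbed by replacing $c_L$ by $c_L-\log 2$, and this still goes to infinity. Equivalently, we may use the slightly weaker $c_L$ coming from item (4) applied with a large enough $L$. The factor $2^{|\Delta|}$ goes into $a_L$. We are therefore left to show, for each fixed vector $(n_x)$,
\[
\P\Big[\,|\calC_x|\ge n_x\ \ \forall x\in\Delta\,\Big]\le \prod_{x\in\Delta}\tilde c_L\,\e^{-c_L n_x}= \tilde c_L^{|\Delta|}\e^{-c_L n}.
\]
The term $\tilde c_L^{|\Delta|}$ is also absorbed into $a_L$.

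To obtain this joint bound I would decompose according to the geometry of the clusters. Condition on the realisation of $\calC_\Delta$ and split it into its connected components $K_1,\dots,K_m$ in $\T_N^L$, writing $\Delta_i\coloneqq\Delta\cap K_i$. These components are pairwise disjoint, and $\{\calC_{\Delta_i}=K_i\}$ is measurable with respect to the clusters of $\Delta_i$. Applying item (3) of Theorem~\ref{thm:dependency_encoding} with $F=G=1$ iteratively (induction on $m$) factorises the probability into $\prod_i \P[\calC_{\Delta_i}=K_i]$. This reduces the problem to a single connected component $K$ that is a connected union of the sets $\calC_x$, $x\in\Delta_i$, each containing its own base point. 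Summing over the possible $K$ introduces the usual lattice-animal entropy $\e^{O(|K|)}$ per component. This is again absorbed by taking $L$ large, since $c_L\to\infty$.

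The main obstacle is the within-component estimate. On a single component the sets $\calC_x$ may overlap, so item (3) no longer gives independence. Moreover, naively bounding by the largest cluster, $\max_x|\calC_x|\ge |K|/|\Delta_i|$, loses a factor $|\Delta_i|$ in the exponent, which is unacceptable. My first attempt would be to choose, inside $K$, a spanning structure in which each $x$ is charged only the portion of $\calC_x$ not already covered by the clusters of previously ordered points. I would then control these increments by a BK-type (disjoint occurrence) inequality. Such an inequality should be available from the construction in~\cite{Ott_RCM}: there the $\calC_x$ are built from space-time dependency clusters of the coupling-from-the-past, driven by i.i.d.\ randomness, and their tail bound is proved by a Peierls/path-counting argument on that i.i.d.\ randomness. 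Running that path-counting argument jointly for all $x\in\Delta_i$ gives disjoint witness paths of total length at least $|K|-|\Delta_i|$, and hence the bound $\tilde c_L^{|\Delta_i|}\e^{-c_L(|K|-|\Delta_i|)}$. The loss $\e^{c_L|\Delta_i|}$ is once more absorbed into $a_L$. If this route fails, I would fall back on reopening the proof of item (4) in~\cite{Ott_RCM} and carrying the joint version of that estimate directly.
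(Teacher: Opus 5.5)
Your proposal has a genuine gap, and its first reduction goes in the wrong direction. The bound $\P[|\calC_x|\ge n_x\ \forall x\in\Delta]\le\prod_{x\in\Delta}\tilde c_L\e^{-c_Ln_x}$ charges an overlap between clusters once for every cluster containing it, whereas $|\calC_\Delta|$ counts it only once. Nothing in items (3)--(4) prevents clusters from overlapping heavily. Here is an example. Let $\calC_x$ be the bad cluster of $x$ together with its outer boundary, in a subcritical i.i.d.\ site percolation. Then $\{\calC_\Delta=C\}$ depends only on the states in $C$, which gives the factorisation of (3); (2) and (4) are clear; and the lemma itself holds by animal counting. In this model $\calC_x=\calC_y$ for adjacent bad sites. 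Take $\Delta$ to be $k$ such points and $n_x\equiv m$. Your left-hand side is at least the probability that a single bad cluster of size $\ge m$ contains all of $\Delta$, which decays only at the exponential rate $c^\ast$ of one cluster. Your right-hand side is $\tilde c_L^k\e^{-kc_Lm}$, which is smaller as soon as $kc_L>c^\ast$. So the union bound over vectors $(n_x)$ must be abandoned, and one has to bound $\P[\calC_\Delta=K]$ directly, counting overlaps once, as your second paragraph starts to do. By contrast, the loss $c_L\mapsto c_L-\log 2$ is harmless, since only $c_L\to\infty$ is used later.

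In that second route, the factorisation over the connected components $K_i$ via item (3) is correct. But the whole content of the lemma then becomes the bound $\P[\calC_{\Delta'}=K]\le\e^{-c|K|+a|\Delta'|}$ for connected $K$, and this you only conjecture. It cannot be derived from Theorem~\ref{thm:dependency_encoding} as stated. Item (3) gives independence only for disjoint target sets; it says nothing about how much $\calC_y$ can add to $K'$ once it meets $K'$.

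To see this, replace Ott's $\calC_x$ by $\calC_x\cup\calC'_x$, where $\calC'$ is independent and built from i.i.d.\ site labels as follows. From $x$, follow \emph{pointer} labels (total probability $\le\e^{-c}$) until reaching a non-pointer site. If that site carries a \emph{hub} label (probability $\approx\e^{-cm}$), attach one of $k$ fixed arms of length $m$, chosen according to the fixed port through which the hub was entered. Each event $\{\calC'_\Delta=C\}$ depends only on the labels in $C$, so items (1)--(4) survive the superposition. Yet $k$ points whose pointer paths enter one hub through distinct ports give $|\calC_\Delta|\ge km$ with probability $\e^{-cm-O_k(1)}$. This contradicts the lemma once $kc_L>c$ and $m\to\infty$.

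Hence the within-component estimate must come from inside Ott's construction. Essentially one needs a joint version of the Peierls/animal-counting bound behind item (4), run on the union of the underlying dependency sets with overlaps counted once. Your proposal names this as the main obstacle but does not supply it. The paper gives no argument either: it simply imports the statement as \cite[Corollary 4.2]{Ott_RCM}.
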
 

Those two results imported, the proof can be started.

\medskip
{\centering
\textbf{In what follows, we fix $(d,p,q)\in \calG_{\mathsf{FK}}$.  $L$ will always be chosen larger than $L_0$ so that Theorem~\ref{thm:dependency_encoding} applies, and $\P$ will denote the probability measure constructed in this result. We also fix $N \geq L$.}
}
\subsection{Resummation as a weighted sum of polymer partition functions}\label{subsec:resumm_weighted}

We fix a function $F$ satisfying the requirement of point (2) of Theorem \ref{thm:local_analyticity}. 
This means that $F$ is a non-negative local function not identically zero. 
We also denote the support of $F$ by $\Delta$. 

Observe that $\phi_{\T_N, p}[F(\om)]$ is a rational function of the parameter $p$. Hence, it defines a holomorphic function from $\bbC$ to the Riemann sphere (the poles correspond to the zeroes of the partition function and get mapped to $\infty$).  With a slight abuse of notation, we will write $\phi_{\T_N, p + z}[F(\om)]$ for the image of the extension at a given point $p+z \in \bbC$.

We introduce the following quantity, for $z \in \C$ satisfying\footnote{This assumption will be implicit throughout all the paper. If needed, the convergence radius needs to be decreased accordingly.} $|z| < 1-p$:
\[
\alpha_z \coloneqq  (1+ \tfrac z p)(1 - \tfrac{z}{1-p})^{-1}.
\]
The following lemma shows that the analytic extension $\phi_{\T_N, p + z}$ corresponds to the measure $\phi_{\T_N,p}$ biased by $\alpha_z^{|\omega|}$.

\begin{lemma}\label{lem:rewriting_phi_loc}
For any $z \in \C$ such that $\phi_{\T_N,p}[\alpha_z^{|\om|} ] \neq 0$,
\[
\phi_{\T_N, p + z}[ F(\om) ] = \frac{\phi_{\T_N,p}[F(\om) \alpha_z^{|\om|} ]}{\phi_{\T_N,p}[\alpha_z^{|\om|} ].}
\]
\end{lemma}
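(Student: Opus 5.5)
The identity is purely algebraic: write the unnormalised weights of $\phi_{\T_N,p+z}$ and factor out those of $\phi_{\T_N,p}$. For real $p'\in(0,1)$, definition~\eqref{equ:def_RCM} (with $k_{\T_N}$ in place of $k^\xi_G$) gives
\[
\phi_{\T_N,p'}[F(\om)] = \frac{\sum_{\om} F(\om)\big(\tfrac{p'}{1-p'}\big)^{|\om|} q^{k_{\T_N}(\om)}}{\sum_{\om}\big(\tfrac{p'}{1-p'}\big)^{|\om|} q^{k_{\T_N}(\om)}},
\]
where the sums run over the finitely many $\om\in\{0,1\}^{E(\T_N)}$. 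Both numerator and denominator are polynomials in $\tfrac{p'}{1-p'}$. Replacing $p'$ by $p+z$ therefore gives the rational extension discussed just before the lemma, wherever the denominator does not vanish (and for $|z|<1-p$ the factor $1-p-z$ does not vanish either).

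The key step is the observation that
\[
\frac{p+z}{1-p-z} = \frac{p}{1-p}\cdot\frac{1+\tfrac zp}{1-\tfrac{z}{1-p}} = \frac{p}{1-p}\,\alpha_z .
\]
Substituting this into both sums, each weight becomes $\big(\tfrac{p}{1-p}\big)^{|\om|}q^{k_{\T_N}(\om)}\,\alpha_z^{|\om|}$. Dividing numerator and denominator by $\mathbf{Z}_{\T_N,p,q}=\sum_\om \big(\tfrac{p}{1-p}\big)^{|\om|}q^{k_{\T_N}(\om)}$, which is a positive real number, turns them into $\phi_{\T_N,p}[F(\om)\alpha_z^{|\om|}]$ and $\phi_{\T_N,p}[\alpha_z^{|\om|}]$ respectively. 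This gives the claimed formula.

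It remains to check that the hypothesis $\phi_{\T_N,p}[\alpha_z^{|\om|}]\neq0$ is the right condition. This expectation equals $\mathbf{Z}_{\T_N,p+z,q}/\mathbf{Z}_{\T_N,p,q}$, so it is nonzero exactly when the complex partition function at $p+z$ is nonzero. In that case the extension $\phi_{\T_N,p+z}[F(\om)]$ takes a finite value, and the value is given by the quotient above.

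One point needs care: the "analytic extension" must coincide with the rational expression in $z$. Both sides are rational functions of $z$ that agree for all real $z$ near $0$, since there the computation above is just the definition of $\phi_{\T_N,p+z}$. By the identity theorem they agree wherever both are defined. The argument has no real obstacle; the only things to do are track the domain condition $|z|<1-p$ and the non-vanishing of the denominator.
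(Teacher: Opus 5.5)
Your proposal is correct and follows the paper's proof: both rest on the identity $\frac{p+z}{1-(p+z)} = \frac{p}{1-p}\,\alpha_z$, which turns the unnormalised weights at $p+z$ into $\alpha_z^{|\om|}$ times the weights at $p$, and then divide numerator and denominator by $\mathbf{Z}_{\T_N,p,q}$. Your extra remarks, on $|z|<1-p$ and on the rational extension agreeing with the substituted expression, are harmless additions, since the paper defines the extension by exactly this substitution.
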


\begin{proof}
Observe that 
\begin{align*}
\sum_{\om\in\{0,1\}^{E(\T_N)}} F(\om)\left( \frac{p+z}{1-(p+z)} \right)^{|\om|} q^{k^{\bbT_N}(\om)} &= \sum_{\om\in\{0,1\}^{E(\T_N)}} F(\om)\alpha^{|\om|}_z \left( \frac{p}{1-p} \right)^{|\om|} q^{k^{\bbT_N}(\om)} \\
&= Z_{\T_N, p}\phi_{\T_N,p}[F(\om)\alpha_z^{|\om|}].
\end{align*}
The same computation with $F \equiv 1$ gives
\begin{align*}
Z_{\T_N, p+z} = Z_{\T_N, p}\phi_{\T_N,p}[\alpha_z^{|\om|}].
\end{align*}
The result follows by dividing the first display by the second one.
\end{proof}

We thus start by expanding the quantity $\phi_{\T_N,p}[ F(\om)\alpha_z^{|\om|} ] $, the goal being to realize this quantity as a weighted sum of modified polymer partition functions to which we will later apply cluster expansion. 
We start with a bit of notation.

\begin{notation}[Polymer set notation] \label{def:polymer} Introduce the following families of sets: 
\begin{itemize}
\item The set $\sfP^L_N$ was previously defined as $\{ \gamma \subset \T_N^L, \gamma \text{ connected and non-empty}\}$.
An element of $\sfP^L_N$ will be called a \emph{polymer}, and will typically be denoted by $\gamma, \bar\gamma$ or $\tilde{\gamma}$. 
We shall simply write $\{\gamma\}$ for a generic family of polymers of $\T_N^L$.
By classical bounds on lattice animals, the number of polymers on $\T_N^L$ of cardinality $n$ containing 0 is upper bounded by $\exp(c(d)n)$, where $c(d)>0$ is a constant depending only on the dimension $d$.
In what follows, $c(d)$ will always be used to refer to this constant. 


\item Define the set of \emph{independent} families of polymers as:
\[
\mathfrak{I} \coloneqq \{ \{ \gamma \}\text{ polymer family of }\sfP_N^L, \forall \gamma, \gamma'  \in \{ \gamma \}, \gamma \neq \gamma' \Rightarrow \gamma \cup \gamma' \text{ is not connected} \}. 
\]

\item For a polymer family $\{\gamma\}$, define its \emph{trace} on $\T_N^L$ by $\mathsf{Tr}(\{\gamma\}) \coloneqq  \bigcup_{\gamma \in \{\gamma\} }\gamma$.

\item For any set $A \subset \T_N^L$, define $I(A)$ as the set of families of polymers intersecting $A$, that is:
\[
I(A) \coloneqq  \{ \{\gamma\} \text{ polymer family of }\sfP_N^L, \forall \gamma \in \{\gamma\}, \gamma \cap A \neq \emptyset\}.
\]


\end{itemize}

\end{notation}

Next lemma is classical, and is proved in~\cite[Lemma 4.4]{Ott_Ising}.
For completeness, we provide a quick proof.

\begin{lemma}\label{lem:def_c(d)}
For any $\Delta \subset \T_N^L$, and any $n \geq |\Delta|$, 
\[
|\{ \{ \gamma \} \in  \mathfrak{I} \cap I(\Delta), |\mathsf{Tr}(\{\gamma\})| = n  \}| \leq 4^n\exp(c(d)n).
\]
\end{lemma}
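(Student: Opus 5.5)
The plan is to encode each independent family by two pieces of data: its trace $T = \mathsf{Tr}(\{\gamma\})$, and a partition of $T$ into polymers. First I will show that the family is determined by these data. The polymers of an independent family are pairwise disjoint: if $\gamma \neq \gamma'$ met, then $\gamma \cup \gamma'$ would be connected, contradicting independence. The same argument shows that no two polymers of the family are adjacent. Hence the polymers of $\{\gamma\} \in \mathfrak{I}$ are exactly the connected components of $T$ in $\T_N^L$, so the map $\{\gamma\} \mapsto \mathsf{Tr}(\{\gamma\})$ is injective on $\mathfrak{I}$. It therefore suffices to count the subsets $T \subset \T_N^L$ with $|T| = n$ such that every connected component of $T$ meets $\Delta$. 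I read $\{\gamma\} \in I(\Delta)$ as saying that each polymer of the family intersects $\Delta$.

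To count such sets $T$, let $\Delta = \{x_1, \dots, x_k\}$ with $k = |\Delta| \leq n$. For each component $\gamma$ of $T$, choose the smallest index $i$ with $x_i \in \gamma$. This assigns to each $i$ either a polymer $\gamma_i \ni x_i$ or nothing, with distinct indices receiving distinct components. Record $n_i = |\gamma_i|$, setting $n_i = 0$ when $x_i$ receives nothing. Then $T$ is determined by the tuple $(\gamma_i)_{i \leq k}$, and $\sum_i n_i = n$.

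Next I will bound the number of tuples. The number of tuples $(n_1, \dots, n_k)$ of non-negative integers summing to $n$ is $\binom{n+k-1}{k-1} \leq \binom{2n}{n} \leq 4^n$, using $k \leq n$. Given these sizes, each nonempty $\gamma_i$ is a polymer of cardinality $n_i$ containing $x_i$. By the lattice animal bound in Notation~\ref{def:polymer}, translated to $x_i$, there are at most $\exp(c(d) n_i)$ choices for it. Multiplying over $i$ gives at most $\exp(c(d)\sum_i n_i) = \exp(c(d) n)$ choices, and multiplying by the $4^n$ size profiles yields the bound $4^n \exp(c(d) n)$.

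The only point needing care is the injectivity of the trace map, that is, that an independent family is recovered from its trace as the components. This follows directly from the definition of $\mathfrak{I}$. The remaining steps are routine combinatorics.
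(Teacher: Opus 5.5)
Your proof is correct and follows the same route as the paper. The paper likewise attaches each polymer to a point of $\Delta$ it contains, counts the size profiles by $\binom{n+|\Delta|-1}{|\Delta|-1}\le 4^n$, and applies the lattice-animal bound $\exp(c(d)n_i)$ to each piece. Your version is more careful than the paper's ``roughly upper bounded by'': you make explicit the smallest-index assignment and the disjointness that gives $\sum_i n_i = n$. The injectivity of the trace map is harmless but not needed, since disjointness alone lets you map a family directly to the tuple $(\gamma_i)$.
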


\begin{proof}
Fix $n$ and $\Delta$ as in the statement. 
It is the case that the cardinality of $\{ \{ \gamma \} \in  \mathfrak{I} \cap I(\Delta), |\mathsf{Tr}(\{\gamma\})| = n  \}$ is roughly upper bounded by the cardinality of
\[ \{  \gamma_{1}, \dots, \gamma_{|\Delta|} \in \sfP^L_N \cup \{\emptyset\}, |\gamma_{1}| +  \dots  +  |\gamma_{|\Delta|}| = n \}. \]
Fix a family $r_1, \dots, r_{|\Delta|}$ of non-negative integers such that $r_1 + \dots + r_{|\Delta|} = n$, and recall that the number of such families is $\binom{n+|\Delta|-1}{|\Delta|-1}$. 
This reasoning implies that 
\[
|\{ \{ \gamma \} \in  \mathfrak{I} \cap I(\Delta), |\mathsf{Tr}(\{\gamma\})| = n  \}| \leq \binom{n+|\Delta|-1}{|\Delta|-1}\exp(c(d)n) \leq 4^n \exp(c(d)n),
\]  
by observing that $\binom{n+|\Delta|-1}{|\Delta|-1} \leq \binom{2n}{|\Delta|-1} \leq 2^{2n}$, since $|\Delta|\leq n$. 
\end{proof}

The quantities of interest in this paper will be expressed in terms of functions of a polymer model on $\T_N^L$. 
More precisely, we will use the so-called \emph{polymer partition function}, defined below. 

\begin{definition}[Polymer partition function] \label{def:polymer_partition_function}
Let $\sfw : \sfP^L_N \rightarrow \C$ be an \emph{activity function}. The following expression will be referred to as the \emph{polymer partition function} with activities $\sfw$:
\[
\Xi(\sfw) \coloneqq  \sum_{\{\gamma\} \in \mathfrak{I}} \prod_{\gamma\in\{\gamma\}} \sfw(\gamma).
\]
Observe that due to the non-intersection constraint, $\Xi(\sfw)$ is always a finite sum.
\end{definition}

The starting point of the analysis is the rewriting of the quantity $\phi_{\T_N,p}[F(\om) \alpha_z^{|\om|}]$ as a weighted sum of polymer partition functions, in order to use the tools provided by the cluster expansion. 
In the following lemma, the reader is invited not to focus too much on the explicit expression of the function $G_z$ and of the polymer weights $\sfw_z^{\{ \bar\gamma \}}$. 
The important aspect of the proposition is the way in which $\phi_{\T_N,p}[F(\om) \alpha_z^{|\om|}]$ is written as a combination of weighted polymer partition functions. 
The proof is rather tedious, but is nothing but a suitable resummation of the expansion obtained by decomposing over the possible realisations of the clusters $\{\calC_x\}_{x \in \bbT_N^L}$, introduced in Theorem~\ref{thm:dependency_encoding}. 
We start with some convenient notation. 
\begin{definition}
Fix $z \in \C$.
\begin{itemize}
\item For $x \in \T_N^L$ and a percolation configuration $\om$ in $\T_N$, introduce the function 
\[f^L_{z,x}(\om) \coloneqq  \big\{ \prod_{e\in E(\Lambda_L(x))} \alpha_z^{\om(e)} \big\} - 1 .\] 

\item  For any polymer $\gamma \in \sfP_N^L$, define the activity function
\[
\sfw_z(\gamma) \coloneqq  \sum_{A\subset \gamma} \P[ \prod_{x\in A} f_{z,x}(\om)\ind{\calC_A = \gamma} ].
\]

\item For any family $\{\bar\gamma\}$ of polymers, define the \emph{non-intersecting} activity function as, for $\gamma \in \sfP_N^L$:
\[
\sfw_z^{\{\bar\gamma\}}(\gamma)\coloneqq  \ind{\gamma \cap \mathsf{Tr}(\{\bar\gamma\}) = \emptyset} \sfw_z(\gamma).
\]

\item Finally, for any \emph{non-intersecting} family of polymers $\{\bar\gamma\}$, define the function:
\[
G_z(\{\bar\gamma\}) \coloneqq  \sum_{A\subset \mathsf{Tr}(\{\bar\gamma\}) } \P[F(\om) \prod_{x\in A} f_{z,x}(\om)\ind{\calC_{A \cup \Delta} = \mathsf{Tr}(\{\bar\gamma\})}].
\]
\end{itemize}

\end{definition}

\begin{lemma}\label{lem:plus_minus_one_expansion}
For any $z \in \C$ such that $\Xi(\sfw_z) \neq 0$,
\[
\phi_{\T_N, p+z}[F(\om)] = \sum_{ \{\bar\gamma\} \in \mathfrak{I} \cap I(\Delta)} G_z(\{\bar\gamma\})  \frac{\Xi (\sfw^{\{\bar\gamma\}}_z)}{\Xi(\sfw_z)},
\]
where $\Xi$ is defined in Definition~\ref{def:polymer_partition_function}.
\end{lemma}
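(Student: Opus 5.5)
By Lemma~\ref{lem:rewriting_phi_loc}, it suffices to establish two expansions:
\[
\phi_{\T_N,p}[\alpha_z^{|\om|}] = \Xi(\sfw_z), \qquad \phi_{\T_N,p}[F(\om)\alpha_z^{|\om|}] = \sum_{\{\bar\gamma\}\in\mathfrak I\cap I(\Delta)} G_z(\{\bar\gamma\})\,\Xi(\sfw_z^{\{\bar\gamma\}}).
\]
Dividing the second identity by the first then gives the claim. The hypothesis $\Xi(\sfw_z)\neq0$ is exactly the condition needed to apply Lemma~\ref{lem:rewriting_phi_loc}.

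\textbf{Binomial expansion.} The boxes $\Lambda_L(x)$, $x\in\T_N^L$, partition (or at least cover without overlap) the edges of $\T_N$, since the coarse lattice has spacing $2L$; the edge-boundary convention has to be fixed so that each edge is counted once. Hence
\[
\alpha_z^{|\om|}=\prod_{x\in\T_N^L}(1+f_{z,x}(\om)) = \sum_{A\subset \T_N^L}\prod_{x\in A}f_{z,x}(\om).
\]
Insert this into $\P$, using Theorem~\ref{thm:dependency_encoding}(1) to replace $\phi_{\T_N,p}$ by $\P$. For each $A$, decompose according to the value of $\calC_A$. The set $\calC_A$ is a union of connected sets each containing a point of $A$, so it splits uniquely into its connected components $\gamma_1,\dots,\gamma_k$. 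These components are pairwise non-adjacent, so $\{\gamma_i\}\in\mathfrak I$. Setting $A_i = A\cap\gamma_i$, the event $\{\calC_A=\cup\gamma_i\}$ equals $\bigcap_i\{\calC_{A_i}=\gamma_i\}$.

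The one delicate point is the adjacency convention. If the components only have to be disjoint, one must check that $\calC_{A_i}$ cannot touch $\gamma_j$. I expect this to be the main obstacle: matching the definition of $\mathfrak I$ (unions not connected) with the disjointness hypothesis of Theorem~\ref{thm:dependency_encoding}(3). It is fine here, because non-adjacent sets are in particular disjoint.

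\textbf{Factorisation.} Applying item (3) of Theorem~\ref{thm:dependency_encoding} iteratively (by induction on $k$, with $f_{z,x}$ supported in $\Lambda_L(x)$, which is indexed by $x\in A_i$) factorises
\[
\P\Big[\prod_i \prod_{x\in A_i} f_{z,x}\,\ind{\calC_{A_i}=\gamma_i}\Big] = \prod_i \P\Big[\prod_{x\in A_i} f_{z,x}\,\ind{\calC_{A_i}=\gamma_i}\Big].
\]
Reorganising the sum over $A$ as a sum over independent families $\{\gamma\}$ and subsets $A_i\subset\gamma_i$ yields $\prod_i\sfw_z(\gamma_i)$. Summing over families gives $\Xi(\sfw_z)$. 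The empty family corresponds to $A=\emptyset$ and contributes $1$.

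\textbf{Numerator.} The same expansion applies with $F$ inserted, but now we decompose according to $\calC_{A\cup\Delta}$. Its connected components split into two groups:
\begin{itemize}
\item those meeting $\Delta$, which form a family $\{\bar\gamma\}\in\mathfrak I\cap I(\Delta)$;
\item the remaining components $\gamma_j$, which are disjoint from and non-adjacent to $\mathsf{Tr}(\{\bar\gamma\})$.
\end{itemize}
Write $A=A'\cup A''$, with $A'\subset\mathsf{Tr}(\{\bar\gamma\})$ and $A''$ outside it. The event $\{\calC_{A\cup\Delta}=\ldots\}$ then equals $\{\calC_{A'\cup\Delta}=\mathsf{Tr}(\{\bar\gamma\})\}\cap\bigcap_j\{\calC_{A''\cap\gamma_j}=\gamma_j\}$. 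Here $\Delta$ should be read as its coarse-grained image in $\T_N^L$, so that $F$ is supported in $\bigcup_{x\in\Delta}\Lambda_L(x)$.

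Factorising once more with Theorem~\ref{thm:dependency_encoding}(3) gives $G_z(\{\bar\gamma\})$ times a product of $\sfw_z(\gamma_j)$. The outer families range over independent families avoiding $\mathsf{Tr}(\{\bar\gamma\})$, and summing over them produces exactly $\Xi(\sfw_z^{\{\bar\gamma\}})$. This is the definition of the non-intersecting activity, provided "avoiding" is read with the same adjacency convention as $\mathfrak I$; this again needs to be checked against the precise notion of independence.
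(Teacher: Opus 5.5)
Your route is the same as the paper's: a $\pm1$ expansion, a decomposition according to $\calC_{A\cup\Delta}$, a split into the components meeting $\Delta$ and the others, and factorisation through item (3) of Theorem~\ref{thm:dependency_encoding}. Your proof of $\phi_{\T_N,p}[\alpha_z^{|\om|}]=\Xi(\sfw_z)$ is correct, and it is more careful than the corresponding step in the paper. The key point is that each $\calC_x$ is connected and contains $x$, so it lies in a single connected component of $\calC_A$; this is what makes $\{\calC_A=\bigcup_i\gamma_i\}=\bigcap_i\{\calC_{A_i}=\gamma_i\}$ true.

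The gap is the one you leave open in your last sentence, and it cannot be closed with the definitions as stated. For each $\{\bar\gamma\}$, your decomposition produces a sum over families $\{\tilde\gamma\}\in\mathfrak I$ whose polymers are disjoint from \emph{and not adjacent to} $\mathsf{Tr}(\{\bar\gamma\})$. By contrast, $\sfw_z^{\{\bar\gamma\}}$ only kills polymers \emph{intersecting} the trace. The additional families do not contribute zero. Item (3) only requires $C_1\cap C_2=\emptyset$, so each extra term recombines into $\P[F\prod_{x\in\bar A\cup\tilde A}f_{z,x}\,\ind{\calC_{\bar A\cup\Delta}=\mathsf{Tr}(\{\bar\gamma\})}\ind{\calC_{\tilde A}=\mathsf{Tr}(\{\tilde\gamma\})}]$ with adjacent traces. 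Such a configuration is already counted once, in the term indexed by the connected components of $\calC_{A\cup\Delta}$, so it is double counted.

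A first-order check makes this concrete. Take $\Delta=\{x\}$ and differentiate at $z=0$, where $f_{0,y}\equiv0$, and set $g_y\coloneqq\partial_zf_{z,y}|_{z=0}=\frac{1}{p(1-p)}\sum_{e\in E(\Lambda_L(y))}\om(e)$.
The left-hand side gives $\sum_y\P[Fg_y]$.
The right-hand side gives $\sum_y\P[Fg_y(\ind{\calC_x\cup\calC_y\text{ connected}}+\ind{\calC_x\cap\calC_y=\emptyset})]$. The first indicator comes from the terms $A=\{y\}$ of $G_z$; the second comes from $A=\emptyset$ in $G_z$ times the one-polymer terms of $\Xi(\sfw_z^{\{\bar\gamma\}})$, recombined via (3).
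The difference is $\sum_y\P[Fg_y\,\ind{\calC_x\cap\calC_y=\emptyset,\ \calC_x\cup\calC_y\text{ connected}}]\geq 0$. It is strictly positive in general, because (3) gives $\P[Fg_y\ind{\calC_x=C_1,\calC_y=C_2}]=\P[F\ind{\calC_x=C_1}]\,\P[g_y\ind{\calC_y=C_2}]$ for \emph{every} disjoint pair, adjacent pairs included.

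The paper's own proof does not settle this point either. When it passes to ``summing over the potential realisations of these two families'', it replaces the constraint $\{\bar\gamma\}\cup\{\tilde\gamma\}\in\mathfrak I$ by $\mathsf{Tr}(\{\bar\gamma\})\cap\mathsf{Tr}(\{\tilde\gamma\})=\emptyset$. It then splits $\ind{\calC_{A\cup\Delta}=\mathsf{Tr}(\{\bar\gamma\})\cup\mathsf{Tr}(\{\tilde\gamma\})}$ into $\ind{\calC_{\bar A\cup\Delta}=\mathsf{Tr}(\{\bar\gamma\})}\ind{\calC_{\tilde A}=\mathsf{Tr}(\{\tilde\gamma\})}$. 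Both steps are exact only when the two traces are non-adjacent.

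The repair is to set $\sfw_z^{\{\bar\gamma\}}(\gamma)\coloneqq\sfw_z(\gamma)$ when $\gamma\cup\bar\gamma$ is not connected for every $\bar\gamma\in\{\bar\gamma\}$, and $0$ otherwise. This kills every polymer meeting the $1$-neighbourhood of the trace. With that definition your argument is a complete proof. The downstream estimates change only by constants: Lemmas~\ref{lem:ratio_polymers} and~\ref{lem:bound_G} then involve a set of size at most $(2d+1)|\mathsf{Tr}(\{\bar\gamma\})|$. This requires the appendix's cluster expansion to use the incompatibility relation of $\mathfrak I$ (that $\gamma\cup\gamma'$ is connected) rather than intersection.
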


\begin{proof}
One just has to prove that 
\[ \phi_{\T_N,p}[F(\om) \alpha_z^{|\om|}]  = \sum_{ \{\bar\gamma\} \in \mathfrak{I} \cap I(\Delta)} G_z(\{\bar\gamma\})  \Xi (\sfw^{\{\bar\gamma\}}_z). \]
Indeed, by taking $F \equiv 1$ in the above, we get
\[ \phi_{\T_N,p}[\alpha_z^{|\om|}]  = \Xi(\sfw_z),\]
so the conclusion follows directly by Lemma~\ref{lem:rewriting_phi_loc}.

We start with a classical $\pm 1$ expansion\footnote{The careful reader might note that some edges that lie on the boundary of a $L$-box are counted twice. For the identity to be correct, we need to remove some edges on the boundary in the definition of $f^L_{z,x}$; we shall always do it implicitly.}:
\[
\phi_{\T_N,p}[F(\om) \alpha_z^{|\om|}] = \phi_{\T_N,p}[F(\om) \prod_{x \in \bbT^N_L }\{ f_{z,x}(\om) + 1\}] = \phi_{\T_N,p}[F(\om) \sum_{A \subset \bbT^N_L } \prod_{x \in A } f_{z,x}(\om) ].
\]
The next step is to sum over the trace of the possible realisations of $\calC_{A \cup \Delta}$ under the measure $\P$. This is done writing:
\begin{align*}
\phi_{\T_N,p}[F(\om) \alpha_z^{|\om|}] = \sum_{\{\gamma \} \in \mathfrak{I}}
\sum_{A \subset \mathsf{Tr}(\{\gamma\})}
\P[F(\om)\prod_{x \in A}  f_{z,x}(\om)\ind{\calC_{A \cup \Delta} = \mathsf{Tr}(\{\gamma\})} ].
\end{align*}
We partition each family $\{\gamma\} \in \mathfrak{I}$ in two by taking $\{\bar\gamma\}$ to be the polymers of $\{\gamma\}$ that intersect $\Delta$ and $\{\tilde\gamma\}$ to be the polymers of $\{\gamma\}$ that do not intersect $\Delta$. 
By summing over the potential realisations of these two families instead of summing over $\{\gamma\}$, we get:
\begin{multline*}
\phi_{\T_N,p}[F(\om) \alpha_z^{|\om|}] = \sum_{\{\bar\gamma \}}
\sum_{\{\tilde\gamma \}}
\ind{\mathsf{Tr}(\{\bar{\gamma}\}) \cap \mathsf{Tr}(\{\tilde{\gamma}\}) = \emptyset}\\ 
\sum_{A \subset \mathsf{Tr}(\{\bar{\gamma}\}) \cup \mathsf{Tr}(\{\tilde{\gamma}\})}
\P[F(\om)\prod_{x \in A}  f_{z,x}(\om)\ind{\calC_{A \cup \Delta} = \mathsf{Tr}(\{\bar\gamma\}) \cup \mathsf{Tr}(\{\tilde\gamma\})} ],
\end{multline*}
where the first sum is over the families $\{\bar\gamma \} \in \mathfrak{I}\cap I(\Delta)$ such that $\Delta \subset \mathsf{Tr}(\{\bar\gamma\})$ and the second sum is over the families $\{\tilde\gamma \} \in \mathfrak{I}$. \textbf{All the sum over these symbols will always be taken as such in the remainder of the proof.}

For fixed families $\{\bar\gamma \}$ and $\{\tilde\gamma \}$ such that the indicator function is not 0 and a fixed set $A \subset \mathsf{Tr}(\{\bar{\gamma}\}) \cup \mathsf{Tr}(\{\tilde{\gamma}\})$, we can partition $A$ in two by taking 
\begin{align*}
\bar{A} &= A \cap \mathsf{Tr}(\{\bar\gamma\}) & \tilde{A} = A \cap \mathsf{Tr}(\{\tilde\gamma\}).
\end{align*}
By summing over the possible realisations of $\bar{A}$ and $\tilde{A}$ instead of summing over the possible realisations of $A$, we obtain
\begin{multline*}
\phi_{\T_N,p}[F(\om) \alpha_z^{|\om|}] = \sum_{\{\bar\gamma \}}
\sum_{\{\tilde\gamma \}}
\ind{\mathsf{Tr}(\{\bar{\gamma}\}) \cap \mathsf{Tr}(\{\tilde{\gamma}\}) = \emptyset}\\
\sum_{\bar{A} \subset \mathsf{Tr}(\{\bar{\gamma}\})}\sum_{\tilde{A} \subset \mathsf{Tr}(\{\tilde{\gamma}\})}
\P[F(\om)\prod_{x \in \bar{A}}  f_{z,x}(\om)\ind{\calC_{\bar{A} \cup \Delta} = \mathsf{Tr}(\{\bar\gamma\})} \prod_{x \in \tilde{A}}  f_{z,x}(\om)\ind{\calC_{\tilde{A}} = \mathsf{Tr}(\{\tilde\gamma\})}].
\end{multline*}

Fix $\{\bar{\gamma}\}$, $\{\tilde{\gamma}\}$, $\bar{A}$, $\tilde{A}$ as in the sums above and such that $\mathsf{Tr}(\{\bar{\gamma}\}) \cap \mathsf{Tr}(\{\tilde{\gamma}\}) = \emptyset$. 
Since the support $\Delta$ of $F$ is contained in $\mathsf{Tr}(\{\bar{\gamma}\})$, the decoupling property of $\P$ (point (3) of Theorem \ref{thm:dependency_encoding}) implies that:
\begin{multline*}
\P[F(\om)\prod_{x \in \bar{A}}  f_{z,x}(\om)\ind{\calC_{\bar{A} \cup \Delta} = \mathsf{Tr}(\{\bar\gamma\})} \prod_{x \in \tilde{A}}  f_{z,x}(\om)\ind{\calC_{\tilde{A}} = \mathsf{Tr}(\{\tilde\gamma\})}]\\
= \P[F(\om)\prod_{x \in \bar{A}}  f_{z,x}(\om)\ind{\calC_{\bar{A} \cup \Delta} = \mathsf{Tr}(\{\bar\gamma\})}] \P[\prod_{x \in \tilde{A}}  f_{z,x}(\om)\ind{\calC_{\tilde{A}} = \mathsf{Tr}(\{\tilde\gamma\})}].
\end{multline*}
Furthermore, since the elements $\tilde{\gamma}$ of $\{\tilde{\gamma}\}$ are disjoint, the decoupling property of $\P$ also implies that
\[
\P[\prod_{x \in \tilde{A}}  f_{z,x}(\om)\ind{\calC_{\tilde{A}} = \mathsf{Tr}(\{\tilde\gamma\})}] = \prod_{\tilde{\gamma} \in \{\tilde{\gamma}\}} \P[ \prod\limits_{x \in \tilde{\gamma} \cap \tilde{A}} f_{z,x}(\om)\ind{\calC_{\tilde{\gamma} \cap \tilde{A}} = \tilde{\gamma}}].
\]
Using these two observations and rearranging the terms, we get
\begin{multline*}
\phi_{\T_N,p}[F(\om) \alpha_z^{|\om|}] = \sum_{\{\bar\gamma \} }
\sum_{\bar{A} \subset \mathsf{Tr}(\{\bar{\gamma}\})}
\P[F(\om)\prod_{x \in \bar{A}}  f_{z,x}(\om)\ind{\calC_{\bar{A} \cup \Delta} = \mathsf{Tr}(\{\bar\gamma\})}]\\
\sum_{\{\tilde\gamma \}}
\ind{\mathsf{Tr}(\{\bar{\gamma}\}) \cap \mathsf{Tr}(\{\tilde{\gamma}\}) = \emptyset}
\sum_{\tilde{A} \subset \mathsf{Tr}(\{\tilde{\gamma}\})}
\prod_{\tilde{\gamma} \in \{\tilde{\gamma}\}} \P[ \prod\limits_{x \in \tilde{\gamma} \cap \tilde{A}} f_{z,x}(\om)\ind{\calC_{\tilde{\gamma} \cap \tilde{A}} = \tilde{\gamma}}].
\end{multline*}

Observe the following interchange of sum and product:
\begin{align*}
\sum_{\tilde{A} \subset \mathsf{Tr}(\{\tilde{\gamma}\})}
\prod_{\tilde{\gamma} \in \{\tilde{\gamma}\}} 
\P[ \prod\limits_{x \in \tilde{\gamma} \cap \tilde{A}} f_{z,x}(\om)\ind{\calC_{\tilde{\gamma} \cap \tilde{A}} = \tilde{\gamma}}]
= 
\prod_{\tilde{\gamma} \in \{\tilde{\gamma}\}} 
\sum_{\tilde{A} \subset \tilde{\gamma}}
\P[ \prod\limits_{x \in \tilde{A}} f_{z,x}(\om)\ind{\calC_{\tilde{A}} = \tilde{\gamma}}].
\end{align*}
Also observe that the indicator function can be written as follows:
\[
\ind{\mathsf{Tr}(\{\bar{\gamma}\}) \cap \mathsf{Tr}(\{\tilde{\gamma}\}) = \emptyset} = \prod_{\tilde\gamma \in \{\tilde\gamma\}} \ind{\tilde\gamma \cap \mathsf{Tr}(\{\bar\gamma\}) = \emptyset}.
\]
The previous two observations imply the following rewriting: 
\begin{align*}
\phi_{\T_N,p}[F(\om) \alpha_z^{|\om|}] 
= 
\sum_{\{\bar\gamma\}}
\sum_{\bar{A} \subset \mathsf{Tr}(\{\bar\gamma\} )}
\P[F(\om)\prod_{x \in \bar{A}}  f_{z,x}(\om)\ind{\calC_{\bar{A} \cup \Delta} = \mathsf{Tr}(\{\bar\gamma\})}]
\Xi(\sfw_z^{\{\bar\gamma\}}).
\end{align*}

It follows immediately from the definition of $G_z$ that
\[
\sum_{ \{\bar\gamma\} \in \mathfrak{I} \cap I(\Delta) \atop \Delta \subset \mathsf{Tr}(\{\bar\gamma\})} G_z(\{\bar\gamma\})  \Xi (\sfw_z^{\{\bar\gamma\}}).
\]
The conclusion follows by observing that $G_z(\{\bar\gamma\}) = 0$ when $\Delta$ is not contained in $\mathsf{Tr}(\{\bar\gamma\})$.
\end{proof}

\subsection{Cluster expansion bounds on the ratio of polymer partition functions}\label{subsec:clust_exp_2}

The next step estimates the ratio of the two polymer partition functions $\Xi(\sfw_z^{\{\bar\gamma\}})$ and $\Xi(\sfw_z )$ appearing in Lemma~\ref{lem:plus_minus_one_expansion}.
This will be done using the machinery of cluster expansion provided in the appendix.
To this end, we first prove that the parameter $L$ can be chosen so that the activities $\sfw_z$ satisfy the assumption of Theorem~\ref{thm:cluster_expansion}: this is the main role of the coarse-graining step induced by grouping the $\alpha_z^{|\om|}$ terms in boxes of size $L$ in the function $f^L_{z,x}$. 
This fact was checked in~\cite{Ott_RCM}, but we provide a quick proof for completeness. 

\begin{lemma}\label{lem:choosing_L}
For every $L>0$ large enough, there exists $\delta > 0$ such that for any $0 < \eps < \delta $, there exists $C_\eps > 0$ going to 0 when $\eps$ goes to 0 such that for any $|z|< \eps$ and any polymer $\gamma \in \sfP^L_N$, 
\[
|\sfw_z(\gamma)| \leq C_\eps \exp\big(-|\gamma|(2+c(d))\big).
\]
\end{lemma}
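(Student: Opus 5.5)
The bound will follow by estimating each term of the defining sum of $\sfw_z(\gamma)$ crudely. The two sources of smallness are the function $f_{z,x}$, which is small when $z$ is small, and the exponential tail of the clusters $\calC_A$ from Lemma~\ref{lem:exp_dec_cluster_sets}. The order of choices is important: first fix $L$ (independently of $z$), then choose $\delta$ and $\eps$ depending on $L$.

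First I bound $f_{z,x}$ uniformly in $\om$. Since $\alpha_z = (1+\tfrac zp)(1-\tfrac z{1-p})^{-1}$, we have $|\alpha_z - 1| \le K|z|$ for $|z|$ small, where $K$ depends only on $p$. The box $\Lambda_L(x)$ contains at most $d(2L+1)^d$ edges, so for every $\om$
\[
|f_{z,x}(\om)| \le (1+K|z|)^{d(2L+1)^d} - 1 \eqqcolon \eta_L(|z|),
\]
and $\eta_L(\eps)\to 0$ as $\eps \to 0$ for each fixed $L$.

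Next I bound each term of the activity. For $A \subset \gamma$ with $A\neq\emptyset$,
\[
\Big|\P\big[\textstyle\prod_{x\in A} f_{z,x}(\om)\ind{\calC_A = \gamma}\big]\Big| \le \eta_L(\eps)^{|A|}\,\P[|\calC_A| \ge |\gamma|] \le \eta_L(\eps)^{|A|} \exp(-c_L|\gamma| + a_L|A|),
\]
using Lemma~\ref{lem:exp_dec_cluster_sets}. The term $A=\emptyset$ requires care. Here $\calC_\emptyset=\emptyset\neq\gamma$ because polymers are non-empty, so this term vanishes. Summing over $A\subset\gamma$ gives
\[
|\sfw_z(\gamma)| \le e^{-c_L|\gamma|}\big[(1+\eta_L(\eps)e^{a_L})^{|\gamma|} - 1\big].
\]

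It remains to choose the parameters. The step I expect to be the main obstacle is handling the constant $a_L$, which may grow with $L$, so that it does not destroy the decay. The remedy is the order of quantifiers. First take $L$ large enough that $c_L \ge 2(2+c(d)) + 1$, which is possible because $c_L\to\infty$. With $L$ now fixed, take $\delta$ small enough that $\eta_L(\delta)e^{a_L}\le 1$. Using $(1+u)^n - 1 \le n u (1+u)^{n-1} \le n u 2^{n}$ for $u\le 1$, I get
\[
|\sfw_z(\gamma)| \le \eta_L(\eps)e^{a_L}\, |\gamma|\, 2^{|\gamma|} e^{-c_L|\gamma|} \le C_\eps e^{-(2+c(d))|\gamma|},
\]
with $C_\eps \coloneqq \eta_L(\eps)e^{a_L}\sup_{n\ge1} n2^n e^{-(c_L-2-c(d))n}$. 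The supremum is finite because $c_L - 2 - c(d) > \log 2$, and $C_\eps\to0$ as $\eps\to0$ since $\eta_L(\eps)\to0$. Every constant here is uniform in $N$ and $\gamma$, as required.
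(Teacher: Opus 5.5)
Your proof is correct and follows the paper's argument: a uniform bound on $|f_{z,x}|$ that vanishes as $|z|\to 0$, Lemma~\ref{lem:exp_dec_cluster_sets} applied to $\P[|\calC_A|\ge|\gamma|]$, a binomial sum over non-empty $A$, and the same order of choices ($L$ first to make $c_L$ large, then $\delta$ so that the smallness of $f$ beats $e^{a_L}$). The only difference is cosmetic. You extract the vanishing prefactor via $(1+u)^n-1\le nu2^n$ and absorb the factor $|\gamma|$ into the exponential margin, whereas the paper uses $(C^L_\delta)^{|A|}\le\sqrt{C^L_\delta}\,(\sqrt{C^L_\delta})^{|A|}$ for $A\neq\emptyset$, which gives a prefactor $\sqrt{C^L_{|z|}}$.
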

\begin{proof}
Let $\delta> 0$. 
First observe that for any fixed $L \in \Z_{> 0}$,
\begin{align}\label{eq:unif_bound_on_f}
C_\delta^L\coloneqq  \sup_{|z| < \delta}\sup_{x \in \T_N^L}\sup_{\om \in \{0,1\}^{E(\T_N^L)}} | f_{z,x}(\om)| \leq \sup_{|z| < \delta}\sup_{k \in \{0, \cdots, E(\Lambda_L)\}} |\alpha_z^k -1 | \underset{\delta\rightarrow 0}{\longrightarrow} 0. 
\end{align}

Thus, for any $|z|<\delta$, any $\gamma \in \sfP^L_N$, if $L$ and $\delta$ are such that $C_\delta^L \leq 1$, then by Lemma \ref{lem:exp_dec_cluster_sets}, we get the following bound on the modulus of the weights:
\begin{align*}
|\sfw_z(\gamma)|  
\leq
\sum_{A \subset \gamma}(C_\delta^L)^{|A|} \P[|\calC_A| \geq |\gamma|] 
&\leq 
\sum_{A\subset \gamma \atop A \neq \emptyset} (C_\delta^L)^{|A|}\exp(a_L|A| - c_L|\gamma|) \\
&\leq \sqrt{C_\delta^L} \sum_{A\subset \gamma \atop A \neq \emptyset} \left(\sqrt{C_\delta^L}\right)^{|A|}\exp(a_L|A| - c_L|\gamma|) \\
&= \sqrt{C_\delta^L} (1+\sqrt{C_\delta^L}\exp(a_L))^{|\gamma|}\exp(-c_L|\gamma|).
\end{align*}
We then choose $L$ large enough so that $c_L \geq 3+c(d)$, and $\delta$ so small that for this value of $L$, $\sqrt{C^L_\delta}\e^{a_L} \leq 1$. 
Then, it is the case that for any $|z|\leq \delta$, 
\[ |\sfw_z(\gamma)| \leq \sqrt{C_{|z|}^L} \exp(-|\gamma|(2+c(d))), \]
which concludes the proof. 
\end{proof}
\textbf{From now on, $L$ and $\delta>0$ are fixed accordingly so that both Theorem~\ref{thm:dependency_encoding} and Lemma~\ref{lem:choosing_L} hold. }
We can now estimate the ratio of the two polymer partition functions $\Xi(\sfw_z^{\{\bar\gamma\}})$ and $\Xi(\sfw_z )$ by combining Lemma~\ref{lem:choosing_L} with the cluster expansion results provided by Theorem~\ref{thm:cluster_expansion} and Corollary~\ref{thm:cluster_expansion_inter_A}. 

\begin{lemma}\label{lem:ratio_polymers}
For any $0 < \eps < \delta$, there exists $c_\eps > 0$ going to 0 when $\eps$ tends to 0 such that for any $|z|< \eps$ and any family $\{ \bar\gamma\} \in \mathfrak{I}$, 
\[
\left| \frac{\Xi(\sfw_z^{\{\bar\gamma\}})}{\Xi(\sfw_z)} \right| \leq \exp(c_\eps |\mathsf{Tr}(\{\bar\gamma\})| ). 
\]
\end{lemma}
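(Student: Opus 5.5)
The plan is to write the ratio as the exponential of a difference of logarithms and control that difference with the convergent cluster expansion. By Lemma~\ref{lem:choosing_L}, for $|z|<\eps<\delta$ the activities satisfy $|\sfw_z(\gamma)| \leq C_\eps \e^{-|\gamma|(2+c(d))}$ with $C_\eps \to 0$. This is precisely the Kotecký--Preiss type hypothesis of Theorem~\ref{thm:cluster_expansion}, since the lattice-animal bound $\e^{c(d)n}$ leaves a margin $\e^{-2|\gamma|}$. The same bound holds for the restricted activities $\sfw_z^{\{\bar\gamma\}}$, which are obtained by setting some activities to zero. Hence both $\Xi(\sfw_z)$ and $\Xi(\sfw_z^{\{\bar\gamma\}})$ are non-zero, and
\[
\log \Xi(\sfw) = \sum_{X} \Phi(X)\prod_{\gamma\in X}\sfw(\gamma),
\]
where the sum is over clusters $X$, that is, connected multisets of polymers, and $\Phi$ is the Ursell function.

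The next step is to subtract. Clusters in which no polymer meets $T\coloneqq \mathsf{Tr}(\{\bar\gamma\})$ have identical weights in both expansions and cancel. Therefore
\[
\log\Xi(\sfw_z) - \log \Xi(\sfw_z^{\{\bar\gamma\}}) = \sum_{X:\ \exists \gamma\in X,\ \gamma\cap T\neq\emptyset} \Phi(X)\prod_{\gamma\in X}\sfw_z(\gamma).
\]
I bound this by a union bound over the sites $x\in T$. Each relevant cluster contains a polymer through some $x\in T$, so the sum is at most
\[
\sum_{x\in T}\sum_{X \ni x} |\Phi(X)|\prod_{\gamma \in X}|\sfw_z(\gamma)|.
\]
This is exactly the quantity controlled by Corollary~\ref{thm:cluster_expansion_inter_A} with $A=\{x\}$, or directly with $A=T$. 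The standard estimate gives, for each $x$,
\[
\sum_{X\ni x}|\Phi(X)\prod_{\gamma\in X}\sfw_z(\gamma)| \leq \sum_{\gamma\ni x}|\sfw_z(\gamma)|\,\e^{a(\gamma)},
\]
with $a(\gamma)=|\gamma|$. This is at most $C_\eps\sum_{n\geq1}\e^{c(d)n}\e^{-(2+c(d))n}\e^{n}$, which is at most $C_\eps$ up to a constant.

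Summing over $x\in T$ gives $|\log\Xi(\sfw_z)-\log\Xi(\sfw_z^{\{\bar\gamma\}})| \leq c_\eps|T|$ with $c_\eps \coloneqq K C_\eps \to 0$. Taking the modulus of the exponential then yields
\[
|\Xi(\sfw_z^{\{\bar\gamma\}})/\Xi(\sfw_z)| \leq \exp(c_\eps|T|),
\]
which is the claim. The constants do not depend on $N$ or on $\{\bar\gamma\}$, because the activity bound is uniform.

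The main point requiring care is matching the exact form of the appendix results. I need to check that the weight function $a(\gamma)$ used in the convergence criterion of Theorem~\ref{thm:cluster_expansion} is compatible with the decay $\e^{-(2+c(d))|\gamma|}$. I also need the per-site tail sum to carry the small prefactor $C_\eps$, rather than only an $O(1)$ constant, so that $c_\eps\to0$. Both follow from the extra $\e^{-2|\gamma|}$ margin: it absorbs the factor $\e^{a(\gamma)}$ with $a(\gamma)=|\gamma|$ and still leaves a geometric sum multiplied by $C_\eps$.
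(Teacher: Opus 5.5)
Your proposal is correct and follows the paper's proof. Both arguments use the convergent cluster expansions of $\Xi(\sfw_z)$ and $\Xi(\sfw_z^{\{\bar\gamma\}})$, cancel the clusters avoiding $\mathsf{Tr}(\{\bar\gamma\})$, and bound the remaining clusters by $c_\eps|\mathsf{Tr}(\{\bar\gamma\})|$. The one minor difference is that you bound this last sum by a direct per-site Koteck\'y--Preiss estimate with $a(\gamma)=|\gamma|$, which the $\e^{-2|\gamma|}$ margin permits, instead of the translation-averaging proof of Corollary~\ref{thm:cluster_expansion_inter_A}.
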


\begin{proof}
Fix $\{ \bar\gamma\} \in \mathfrak{I}$.
First observe that for any polymer $\gamma$, $|\sfw_z^{\{\bar\gamma\}}(\gamma)| \leq |\sfw_z(\gamma)|$. 
Thus, a convergent cluster expansion for $\Xi(\sfw_{z})$ implies a convergent cluster expansion for $\Xi(\sfw_z^{\{\bar\gamma\}})$.
As both $L$ and $\delta$ are given by Lemma~\ref{lem:choosing_L}, both $\Xi(\sfw_z)$ and $\Xi(\sfw_z^{\{\bar\gamma\}})$ admit a convergent cluster expansion due to Theorem~\ref{thm:cluster_expansion}. 

Further observe that for a polymer $\gamma$ that does not intersect the trace $\mathsf{Tr}(\{\bar\gamma\})$, one has $\sfw_z^{\{\bar\gamma\}}(\gamma) = \sfw_z(\gamma)$. 
In the converse case, we simply have $\sfw_z^{\{\bar\gamma\}}(\gamma) = 0$. 
This implies a simplification in the ratio of exponentials appearing in the cluster expanded forms of the polymer partition functions. Namely, those observations imply that: 
\[
\frac{ \Xi(\sfw_z^{\{\bar\gamma\}})}{\Xi(\sfw_z)}  \\ =  \exp\big(- \sum_{n \geq 0}\sum_{\gamma_1, \cdots, \gamma_n \in \sfP^L_N \atop \mathsf{Tr}(\{\bar\gamma\}) \cap (\gamma_1\cup\cdots\cup\gamma_n) \neq \emptyset}\varphi_n(\gamma_1, \cdots, \gamma_n) \prod_{k=1}^n \sfw_z(\gamma_k)\big).
\]
We now use Corollary~\ref{thm:cluster_expansion_inter_A} together with Lemma~\ref{lem:choosing_L} to conclude that there exists $c_\eps > 0$ going to 0 when $\eps$ tends to 0 such that, for any $|z|< \eps < \delta$,
\[
\left| \frac{\Xi(\sfw_z^{\{\bar\gamma\}})}{\Xi(\sfw_z)} \right| \leq \exp(\tilde{c}_\eps |\mathsf{Tr}(\{\bar\gamma\}| ).
\]
\end{proof}

\subsection{Exponential summability of $G_z$}\label{subsec:exp_summability}

We turn to the estimation of the function $G_z$. 
In fact, we will prove that it is ``exponentially summable'', in the following sense. 

\begin{lemma}\label{lem:bound_G}
There exists $c_\eps>0$, going to 0 when $\eps$ tends to 0, such that for every $\eps > 0$ small enough, if $|z| < \eps$, then:
\begin{align*}
\sum_{\{\bar\gamma\} \in \mathfrak{I}\cap I(\Delta)} |G_z(\{\bar\gamma\})|\exp(-c_\eps |\mathsf{Tr}(\{ \bar\gamma \})|) \leq \phi_{\T_N,p}[F(\om)].
\end{align*}
\end{lemma}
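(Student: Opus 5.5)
Since $F\ge 0$, the triangle inequality and the bound on the $f_{z,x}$ will reduce everything to probabilities under $\P$ of the events $\{\calC_{A\cup\Delta}=\mathsf{Tr}(\{\bar\gamma\})\}$ weighted by $F$, and a geometric decay in $|\mathsf{Tr}|$ will then be traded against the counting bounds. First, fix $|z|<\eps$ and let $C_\eps\coloneqq C^L_\eps$ be the uniform bound \eqref{eq:unif_bound_on_f} on $|f_{z,x}|$, which tends to $0$ with $\eps$ because $L$ is fixed. Since $F\ge0$,
\[
|G_z(\{\bar\gamma\})| \le \sum_{A\subset \mathsf{Tr}(\{\bar\gamma\})} C_\eps^{|A|}\, \P\big[F(\om)\ind{\calC_{A\cup\Delta}=\mathsf{Tr}(\{\bar\gamma\})}\big].
\]

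Next, I exchange the order of summation. Any $A\subset\T_N^L$ together with a realisation of $\calC$ determines a unique trace $T=\calC_{A\cup\Delta}$. The family $\{\bar\gamma\}$ is in turn determined by this trace: its polymers are the connected components of $T$, since $\{\bar\gamma\}\in\mathfrak I$ forces distinct polymers to have disconnected union. Hence summing over $\{\bar\gamma\}$ and $A$ amounts to summing over $A$ and taking expectations, and the left-hand side of the lemma is bounded by
\[
\sum_{A\subset\T_N^L} C_\eps^{|A|}\,\P\big[F(\om)\exp(-c_\eps|\calC_{A\cup\Delta}|)\big]
\le \P[F(\om)]\sum_{A\subset \T_N^L} C_\eps^{|A|}\,\mathrm{(?)}.
\]
This naive sum over all $A$ diverges with $N$: without the factor $e^{-c|\calC|}$ giving summability, it is of order $(1+C_\eps)^{|\T_N^L|}$. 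So the exchange must keep the constraint $A\subset\calC_{A\cup\Delta}$, and the trace must not be thrown away.

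The correct bookkeeping is to sum first over the trace $T$ and only then over $A\subset T$. This gives
\[
\sum_{T} e^{-c_\eps|T|}(1+C_\eps)^{|T|}\, \P\big[F\ind{\calC_\Delta\subset T,\ \calC_{A\cup\Delta}=T \text{ for some }A}\big].
\]
Here the issue is that $F$ must factor out so as to give exactly $\phi[F]$, with no extra constant. For the term with $A=\emptyset$ we get $\sum_T \P[F\ind{\calC_\Delta=T}]e^{-c_\eps|T|}\le \P[F]$, which is the main contribution. For $A\neq\emptyset$, each such term carries at least one factor $C_\eps$, and I would bound $F\le \|F\|_\infty$. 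This is too weak, because the lemma has $\phi[F]$ on the right-hand side rather than $\|F\|_\infty$.

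The approach I would therefore try uses the dependency structure. Condition on $\calC_\Delta=T_0$. The points of $A$ outside $T_0$ produce clusters which, by property (3) of Theorem~\ref{thm:dependency_encoding}, are independent of $F\ind{\calC_\Delta=T_0}$ when they are disjoint from it, and whose sizes have exponential tails by Lemma~\ref{lem:exp_dec_cluster_sets}. Such points must lie in $T$, so each of them costs at least a factor $C_\eps$ and adds at least one site to $|T|$.

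This yields a bound of the form $\P[F\ind{\calC_\Delta=T_0}]\,e^{-c_\eps|T_0|}\prod(\text{small})$, where the product is a convergent lattice-animal sum in the spirit of Lemma~\ref{lem:def_c(d)}. The subtle case is when the clusters of $A$ merge with $T_0$, since then independence fails. Exact summation to at most $\P[F]$ should then come from choosing $c_\eps$ so large relative to $C_\eps$ (with both still tending to $0$, e.g. $c_\eps=\sqrt{C_\eps}$) that $e^{-c_\eps}(1+\text{animal sum})\le1$ per added site. This merging case is the main obstacle.
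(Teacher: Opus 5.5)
Your proposal does not prove the lemma. The terms with $A\neq\emptyset$ are never bounded, and the conditioning argument you sketch for them fails in exactly the case you flag, which is not an exceptional case.

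Property (3) of Theorem~\ref{thm:dependency_encoding} factorises $\P[F\ind{\calC_\Delta=T_0}\,\cdot\,]$ only against events $\{\calC_B=C\}$ with $C\cap T_0=\emptyset$. Every $A$ that meets $T_0=\calC_\Delta$ is already in your merging case, since $x\in\calC_x$. More generally, $\{\bar\gamma\}\in I(\Delta)$ forces every connected component of $\calC_{A\cup\Delta}$ to meet $\Delta$, so the clusters of $A$ are always glued to $T_0$.

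For overlapping clusters you would need tail bounds on $|\calC_{A\cup\Delta}\setminus T_0|$ under the tilted weight $F\ind{\calC_\Delta=T_0}$. Lemma~\ref{lem:exp_dec_cluster_sets} only controls the law of the clusters under $\P$ itself. Bounding $F\le\Vert F\Vert_\infty$ instead loses the factor $\phi_{\T_N,p}[F]$, as you note yourself. The closing heuristic (take $c_\eps=\sqrt{C_\eps}$ so that each added site costs at most $1$) is not an argument.

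Your diagnosis of the divergence after exchanging the sums is also off. The inclusion $A\subset\calC_{A\cup\Delta}$ holds automatically. What you actually dropped is the constraint that every component of $\calC_{A\cup\Delta}$ meets $\Delta$.

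The missing idea is that the paper never controls these correlations. It writes $\phi_{\T_N,p}[F]$ itself as a sum of non-negative terms of the same shape as the left-hand side. Set $\tilde p=p+\eps$. Since $\alpha_\eps^{|\mathbf 1|}$ is a constant, $\phi_{\T_N,p}[F]=\phi_{\T_N,p}[F\alpha_\eps^{|\mathbf 1|}]/\phi_{\T_N,p}[\alpha_\eps^{|\mathbf 1|}]$. Running the resummation of Lemma~\ref{lem:plus_minus_one_expansion} with the constant weights $f_{\tilde p,x}(\mathbf 1)=\alpha_\eps^{(2L)^d}-1>0$ gives
\[ \phi_{\T_N,p}[F]=\sum_{\{\bar\gamma\}\in\mathfrak I\cap I(\Delta)}G^{\mathbf 1}_{\tilde p}(\{\bar\gamma\})\,\frac{\Xi(\sfw^{\mathbf 1,\{\bar\gamma\}}_{\tilde p})}{\Xi(\sfw^{\mathbf 1}_{\tilde p})}, \]
with every term non-negative. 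Your first display is essentially the domination $|G_z|\le G^{\mathbf 1}_{\tilde p}$. The bound of Corollary~\ref{thm:cluster_expansion_inter_A}, applied to the cluster expansion of this positive ratio, shows that the ratio is at least $\exp(-\tilde c_\eps|\mathsf{Tr}(\{\bar\gamma\})|)$. The lemma then follows at once: the merging is absorbed into an exact identity.

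Your exchange-of-sums idea could also be completed without any probabilistic input, though nothing of this kind appears in your proposal. Keep the $I(\Delta)$ constraint. Then the left-hand side is at most $\P[F H]$, where
\[ H=\sum_{A}(C^L_\eps)^{|A|}e^{-c_\eps|\calC_{A\cup\Delta}|}\ind{\text{every component of }\calC_{A\cup\Delta}\text{ meets }\Delta}. \]
One can lower-bound $|\calC_{A\cup\Delta}\setminus\calC_\Delta|$ by half the length of a minimal spanning tree on $(A\setminus\calC_\Delta)\cup\{\calC_\Delta\}$, a Steiner-tree estimate. A tree-graph summation then appears to give $H\le1$ for every realisation of the clusters, once $c_\eps$ is a large dimensional constant times $(C^L_\eps)^{1/(d+1)}$.
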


\begin{proof}
Let $\eps < \delta$, and set $\tilde p = p + \eps$. Also write $\mathbf{1}$ for the percolation configuration on $\T_N$ consisting of open edges only.
The proof consists in expanding the quantity $\phi_{\T_N,p}[F(\om)\alpha_{\eps}^{|\mathbf{1}|}]$ in the cluster expansion formalism to provide an exponential upper bound on $G_z$. 
Indeed, observe that\footnote{This statement is similar to that of Lemma~\ref{lem:plus_minus_one_expansion}; the reader can check that the proof applies \emph{mutatis mutandis}.}
\begin{equation}\label{equ:rewriting_F_proof_exp_sum_G}
\phi_{\T_N,p}[F(\om)] =  \frac{\phi_{\T_N,p}[F(\om)\alpha_{\eps}^{|\mathbf{1}|}]}{\phi_{\T_N,p}[\alpha_{\eps}^{|\mathbf{1}|}]} = \sum_{\{\bar\gamma\} \in \mathfrak{I}\cap I(\Delta)} G^{\mathbf{1}}_{\tilde p}(\{\bar\gamma\})\frac{\Xi(\sfw_{\tilde p}^{\mathbf{1}, \{\bar\gamma\}})}{\Xi(\sfw_{\tilde p}^{\mathbf{1}})},
\end{equation}
where 
\begin{itemize}
\item  For any polymer $\gamma \in \sfP_N^L$, the activity function is modified as follows:
\[
\sfw^{\mathbf{1}}_{\tilde p}(\gamma) \coloneqq  \sum_{A\subset \gamma} \P[ \prod_{x\in A} f_{\tilde p,x}(\mathbf{1})\ind{\calC_A = \gamma} ].
\]

\item For any family $\{\bar\gamma\}$ of polymers, the \emph{non-intersecting} activity function is modified as follows:
\[
\sfw^{\mathbf{1}, \{\bar\gamma\}}_{\tilde p}(\gamma)\coloneqq  \ind{\gamma \cap \mathsf{Tr}(\{\bar\gamma\}) = \emptyset} \sfw^{\mathbf{1}}_{\tilde p}(\gamma).
\]

\item Finally the function $G_z$ is modified as follows
\[
G_{\tilde p}^{\mathbf{1}}(\{\bar\gamma\}) \coloneqq  \sum_{A\subset \mathsf{Tr}(\{\bar\gamma\}) } \P[F(\om) \prod_{x\in A} f_{\tilde p,x}(\mathbf{1})\ind{\calC_{A \cup \Delta} = \mathsf{Tr}(\{\bar\gamma\})}].
\]

\end{itemize}

Observe that $\sfw^{\mathbf{1}}_{\tilde p}, \sfw^{\mathbf{1}, \{\bar\gamma\}}_{\tilde p}$ and $G_{\tilde p}^{\mathbf{1}}$ are real-valued functions taking only non-negative values.
Indeed, when $\eps > 0$, it is the case that $\alpha_{\eps} > 1$, so that $f_{\tilde p, x}(\mathbf{1}) = \alpha_{\eps}^{(2L)^d} - 1 > 0$. 
The fact that $G_{\tilde p}^{\mathbf{1}}$ is non-negative follows from the previous observation together with the positivity of~$F$. 

Up to decreasing the value of $\eps < \delta$, for any family $\{\bar\gamma\} \in \mathfrak{I}\cap I(\Delta)$ and any $z \in \C$ with $|z| < \eps/2$, one has $|G_z(\{\bar\gamma\})| \leq  G_{\tilde p}^{\mathbf{1}}(\{\bar\gamma\})$.
This can be seen by observing that $F \geq 0$, and that for any percolation configuration $\om \in \{0,1\}^{E(\T_N)}$, then $0 \leq |f_{z,x}(\om)| \leq f_{\tilde p, x}(\mathbf{1})$ when $|z| < \eps/2$.


Now, observe that $f_{\tilde p, x}(\mathbf{1}) < C_\eps^L \leq C_\delta^L$, where $C_\delta^L$ was defined in~\eqref{eq:unif_bound_on_f}.
Thus, the proof of the bound of Lemma~\ref{lem:choosing_L} can be repeated for $\sfw_{\tilde p}^{\mathbf{1}}$. So, by Theorem~\ref{thm:cluster_expansion}, we get convergent cluster expansions for the two functions $\Xi(\sfw_{\tilde p}^{\mathbf{1}})$ and $\Xi(\sfw_{\tilde p}^{\mathbf{1}, \{\bar\gamma\}})$. By the same reasoning as in the proof of Lemma~\ref{lem:ratio_polymers}, it follows that:

\[
\frac{\Xi(\sfw_{\tilde p}^{\mathbf{1}, \{\bar\gamma\}})}{\Xi(\sfw_{\tilde p}^{\mathbf{1}})} = \exp\big(-\sum_{n\geq 0}\sum_{\gamma_1, \dots, \gamma_n \in \sfP^L_N \atop \mathsf{Tr}(\{\bar\gamma\}) \cap (\gamma_1 \cup \cdots \cup \gamma_n) \neq \emptyset }\varphi_n(\gamma_1, \dots, \gamma_n) \prod_{k=1}^n \sfw_{\tilde p}^{\mathbf{1}}(\gamma_k) \big).
 \]

By\footnote{Note that this time, our purpose is to show that the ratio of the partition function is not smaller than an exponential decaying as slow as desired. Thus, we use the lower bound provided by the second item of Corollary~\ref{thm:cluster_expansion_inter_A} and not the upper bound given by Lemma~\ref{lem:ratio_polymers}.} Corollary~\ref{thm:cluster_expansion_inter_A}, there exists $\tilde{c}_\eps > 0 $, going to 0 when $\eps$ goes to 0 such that:

\[
 \frac{\Xi(\sfw_{\tilde p}^{\mathbf{1}, \{\bar\gamma\}})}{\Xi(\sfw_{\tilde p}^{\mathbf{1}})} \geq \exp(-\tilde{c}_\eps|\mathsf{Tr}(\{\bar\gamma\})|).
\]

Going back to~\eqref{equ:rewriting_F_proof_exp_sum_G}, we obtain that, when $|z| < \eps/2$,
\begin{align*}
\phi_{\T_N,p}[F(\om)]  =  \sum_{\{\bar\gamma\} \in \mathfrak{I}\cap I(\Delta)} G^{\mathbf{1}}_{\tilde p}(\{\bar\gamma\})\frac{\Xi(\sfw_{\tilde p}^{\mathbf{1}, \{\bar\gamma\}})}{\Xi(\sfw_{\tilde p}^{\mathbf{1}})} 
&\geq  \sum_{\{\bar\gamma\} \in \mathfrak{I}\cap I(\Delta)} G^{\mathbf{1}}_{\tilde p}(\{\bar\gamma\})\exp(-\tilde{c}_\eps |\mathsf{Tr}(\{\bar\gamma \})|) \\
&\geq  \sum_{\{\bar\gamma\} \in \mathfrak{I}\cap I(\Delta)} |G_{z}(\{\bar\gamma\})|\exp(-\tilde{c}_\eps |\mathsf{Tr}(\{\bar\gamma \})|).
\end{align*}
This concludes the proof. 
\end{proof}

\textbf{In what follows, we will set $c_\eps$ to be the maximal value of the constants $c_\eps$ and $\tilde{c}_\eps$ given respectively by Lemmas~\ref{lem:ratio_polymers} and~\ref{lem:bound_G}.}

\subsection{Proof of Theorem~\ref{thm:local_analyticity}}\label{sec:proof_thm_local}

Lemmas~\ref{lem:ratio_polymers} and~\ref{lem:bound_G} will be our main ingredients for providing a proof of Theorem~\ref{thm:local_analyticity}. 
We first write the main result of the section, and shall explain how it easily implies the result of Theorem~\ref{thm:local_analyticity}. 
Recall that $L$ and $\delta > 0$ are chosen such that Lemmas~\ref{lem:ratio_polymers} and~\ref{lem:bound_G} are valid. 

\begin{proposition}\label{prop:unif_bound_ratio_probas}
There exist $\delta > 0$ and a sequence $c_\eps$ going to 0 when $\eps$ goes to 0 such that for any $|z|\leq \eps < \delta$, any $N \geq 0$, and any non-negative local function $F$ which is not identically 0, 
\begin{equation}\label{equ:target_bound_thm_local_analyticity}
\left| \frac{\phi_{\T_N, p+z}[F(\om)]}{\phi_{\T_N, p}[F(\om)]} \right| \leq 2\exp(c_\eps |\Delta| ).
\end{equation}
\end{proposition}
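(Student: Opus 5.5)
The plan is to combine the three ingredients already established: the expansion of Lemma~\ref{lem:plus_minus_one_expansion}, the ratio bound of Lemma~\ref{lem:ratio_polymers}, and the exponential summability of Lemma~\ref{lem:bound_G}. Throughout, $\Delta$ is identified with the set of coarse-grained sites of $\T_N^L$ whose $L$-boxes meet the support of $F$; its cardinality is at most the number of edges in the support. Fix $|z|\le\eps<\delta$. Cluster expansion (Theorem~\ref{thm:cluster_expansion}, via Lemma~\ref{lem:choosing_L}) gives $\Xi(\sfw_z)\neq 0$, so Lemma~\ref{lem:plus_minus_one_expansion} applies. The triangle inequality then gives
\[
|\phi_{\T_N,p+z}[F]|\le \sum_{\{\bar\gamma\}\in\mathfrak I\cap I(\Delta)} |G_z(\{\bar\gamma\})|\,\exp(c_\eps|\mathsf{Tr}(\{\bar\gamma\})|).
\]
It remains to bound this by $2e^{c_\eps|\Delta|}\phi_{\T_N,p}[F]$, with $N$-independent constants.

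The difficulty is a sign mismatch. Lemma~\ref{lem:bound_G} controls $\sum|G_z|e^{-c|\mathsf{Tr}|}$, but we need $\sum|G_z|e^{+c|\mathsf{Tr}|}$. I would absorb the positive exponential into the positive weights used in the proof of Lemma~\ref{lem:bound_G}, at a larger real parameter.

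\textbf{Terms with $A\neq\emptyset$.} Write $G_z=G_z^{\emptyset}+G_z^{+}$, where $G_z^\emptyset(\{\bar\gamma\})=\P[F\,\ind{\calC_\Delta=\mathsf{Tr}(\{\bar\gamma\})}]$. In $G^{+}_z$, every term carries at least one factor $f_{z,x}$ with $|f_{z,x}|\le C^L_{\eps}$. Pick $\eps'$ with $f_{p+\eps',x}(\mathbf 1)\ge e^{2c_\eps}\,C^L_\eps$. This is possible, because $C^L_\eps\to0$ as $\eps\to0$, so $\eps'$ can be kept below $\delta$ with $c_{\eps'}$ still small. Since $A\subset\mathsf{Tr}$ forces nothing about $|A|$ relative to $|\mathsf{Tr}|$, I would instead redistribute the weight via Lemma~\ref{lem:exp_dec_cluster_sets}. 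Bound $e^{2c_\eps|\mathsf{Tr}|}\P[\,\cdot\,;\calC_{A\cup\Delta}=\mathsf{Tr}]$ using the cluster-size tail, losing $e^{a_L|A\cup\Delta|}$ and a factor $e^{2c_\eps|\Delta|}$. The outcome should be
\[
\sum|G^{+}_z|\,e^{c_\eps|\mathsf{Tr}|}\le e^{c'_\eps|\Delta|}\sum G^{\mathbf 1}_{p+\eps'}\,e^{-c_{\eps'}|\mathsf{Tr}|}.
\]
The positive-weight identity~\eqref{equ:rewriting_F_proof_exp_sum_G} at parameter $p+\eps'$, together with the lower bound of Corollary~\ref{thm:cluster_expansion_inter_A}, then bounds this by $e^{c'_\eps|\Delta|}\phi_{\T_N,p}[F]$.

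\textbf{Term with $A=\emptyset$ (main obstacle).} Summing over traces gives $\P[F\,e^{c_\eps|\calC_\Delta|}]$, and this must be compared with $\P[F]=\phi_{\T_N,p}[F]$ without losing more than $e^{c|\Delta|}$. This is delicate because $F$ may have tiny expectation and be correlated with large $\calC_\Delta$. I would first try to run the identity~\eqref{equ:rewriting_F_proof_exp_sum_G} backwards. It expresses $\phi_{\T_N,p}[F]$ as $\sum G^{\mathbf 1}_{\tilde p}\,\Xi^{\{\bar\gamma\}}/\Xi$, with all terms non-negative, and each $G^{\mathbf 1}_{\tilde p}$ dominates its own $A=\emptyset$ part. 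The Corollary~\ref{thm:cluster_expansion_inter_A} lower bound $\Xi^{\{\bar\gamma\}}/\Xi\ge e^{-\tilde c|\mathsf{Tr}|}$ then yields $\sum_T\P[F;\calC_\Delta=T]e^{-\tilde c|T|}\le\phi[F]$. To upgrade $e^{-\tilde c|T|}$ to $e^{+c|T|}$, I would split traces into $|T|\le K|\Delta|$, contributing at most $e^{cK|\Delta|}\phi[F]$, and $|T|>K|\Delta|$. For the large traces I would use the factorisation in item (3) of Theorem~\ref{thm:dependency_encoding} and the tail $e^{-c_L|T|+a_L|\Delta|}$, aiming to show they contribute at most $\phi[F]$; this is the source of the constant $2$.

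Finally, I would collect the constants into a single $c_\eps\to0$ and check that every bound is uniform in $N$ and in $F$.
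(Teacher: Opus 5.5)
There is a genuine gap. Both steps you leave at the level of ``the outcome should be'' and ``aiming to show'' need an ingredient you never invoke: a \emph{lower} bound on $\phi_{\T_N,p}[F]$ coming from finite energy.

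\textbf{Where the argument fails.} Consider first your $A\neq\emptyset$ domination
\[
\sum|G^{+}_z|e^{c_\eps|\mathsf{Tr}|}\le e^{c'_\eps|\Delta|}\sum G^{\mathbf 1}_{p+\eps'}e^{-c_{\eps'}|\mathsf{Tr}|}.
\]
It cannot be obtained termwise. For $|A|=1$ and $|\mathsf{Tr}|\gg|\Delta|$ you would need $C^L_\eps e^{(c_\eps+c_{\eps'})|\mathsf{Tr}|}\le e^{c'_\eps|\Delta|}f_{p+\eps',x}(\mathbf 1)$, which is false. Your proposed repair, redistributing via Lemma~\ref{lem:exp_dec_cluster_sets}, replaces $\P[F;\calC_{A\cup\Delta}=\mathsf{Tr}]$ by the $F$-free quantity $\|F\|_\infty e^{-c_L|\mathsf{Tr}|+a_L(|A|+|\Delta|)}$. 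After that there is no way back to $G^{\mathbf 1}_{p+\eps'}$, which still contains $F$.

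The large-trace part of your $A=\emptyset$ term has the same problem. A tail bound is an absolute bound, while $\phi_{\T_N,p}[F]$ can be exponentially small in $|\Delta|$ with $F$ strongly correlated with $|\calC_\Delta|$ being large. The factorisation in item (3) of Theorem~\ref{thm:dependency_encoding} does not help: it decouples disjoint clusters, not $F$ from $\calC_\Delta$.

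\textbf{The missing ingredient.} Normalise $\|F\|_\infty=1$. Finite energy then gives $\phi_{\T_N,p}[F]\ge e^{-c_{\mathsf{FE}}|\Delta|}$. On traces with $|\mathsf{Tr}|>K|\Delta|$ you can absorb $e^{(a_L+c_{\mathsf{FE}})|\Delta|}\le e^{(a_L+c_{\mathsf{FE}})|\mathsf{Tr}|/K}$ into the decay $e^{-c_L|\mathsf{Tr}|}$. Sum over $A$ with the binomial formula and over families with Lemma~\ref{lem:def_c(d)}, with $L$ large, $\eps$ small and $K>2(a_L+c_{\mathsf{FE}})$. This bounds the contribution by $\sum_{n>K|\Delta|}e^{-n}\,\phi_{\T_N,p}[F]\le\phi_{\T_N,p}[F]$, and that is the real source of the factor $2$. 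This crude bound is usable only on large traces: on small traces the loss $e^{c_{\mathsf{FE}}|\Delta|}$ would destroy $c_\eps\to0$.

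\textbf{The repaired proof.} With this ingredient, your split $G_z=G_z^\emptyset+G_z^+$ and the auxiliary parameter $\eps'$ become unnecessary. Apply the cut at $|\mathsf{Tr}|=K|\Delta|$ to the whole sum, for all $A$ at once, with $K$ fixed independently of $\eps$.
\begin{itemize}
\item On small traces the sign mismatch disappears through $e^{c_\eps|\mathsf{Tr}|}\le e^{2c_\eps K|\Delta|}e^{-c_\eps|\mathsf{Tr}|}$. Lemma~\ref{lem:bound_G} already controls $|G_z|$, including every $A\neq\emptyset$ term, so it gives $e^{2c_\eps K|\Delta|}\phi_{\T_N,p}[F]$.
\item On large traces, use the crude finite-energy bound above.
\end{itemize}
This is exactly the paper's proof.

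A smaller remark: for the small-trace part of your $A=\emptyset$ term you do not need to run~\eqref{equ:rewriting_F_proof_exp_sum_G} backwards. The bound follows from $\sum_T\P[F;\calC_\Delta=T]=\phi_{\T_N,p}[F]$.
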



We first explain how Proposition~\ref{prop:unif_bound_ratio_probas} implies the result of Theorem~\ref{thm:local_analyticity}.

\begin{proof}[Proof of Theorem~\ref{thm:local_analyticity}]
Consider a non-negative local function $F$ that is not identically 0.
Observe that for any $N \in \Z_{> 0}$, the function \[ F_N : z \in \D_\delta(0) \mapsto \phi_{\T_N, p+z}[F(\om)] \in \C \] is well-defined and analytic. Indeed, Proposition~\ref{prop:unif_bound_ratio_probas} ensures in particular that the partition function renormalizing this quantity does not vanish in $\D_\delta(0)$. 
We will use Vitali's convergence theorem (see~\cite[Theorem 4, p.~164]{remmert2013classical}) to conclude and start by checking the validity of its assumptions.
By existence of the infinite volume measure\footnote{This existence is formally guaranteed by the differentiability of the free energy at $p$, which is a consequence of the result of~\cite{Ott_RCM}.  
To apply Vitali's theorem, one may alternatively observe that the interval $(-\delta, \delta)$ contains a dense subset of parameters at which the infinite-volume limit measure is unique due to~\cite[Theorem 4.63]{grimmett_RCM}.} in a neighbourhood of $p$ when $(d,p,q) \in \calG_{\mathsf{FK}}$, $F_N$ does converge on the real interval $(-\delta, \delta)$. Observe that this interval has an accumulation point on $\D_\delta(0)$, and that $F_N$ is uniformly bounded on $\D_\delta(0)$ by Proposition \ref{prop:unif_bound_ratio_probas}. 
It then follows by Vitali's convergence theorem that $F_N$ converges uniformly toward an analytic function on $\D_\delta(0)$, which proves item (1) of Theorem~\ref{thm:local_analyticity}. 
Point (2) now follows by taking $N$ to infinity in Proposition~\ref{prop:unif_bound_ratio_probas}.
This concludes the proof. 
\end{proof}

We now turn to the proof of the important Proposition~\ref{prop:unif_bound_ratio_probas}. 

\begin{proof}[Proof of Proposition~\ref{prop:unif_bound_ratio_probas}]

Let $F$ be a function that is non-negative, local and not identically 0. 
The target bound~\eqref{equ:target_bound_thm_local_analyticity} is clearly invariant by scaling, so we may assume that $\Vert F \Vert_\infty = 1$.

Fix $\eps < \delta$. 
The value of $L$ (resp. $\delta$) will potentially have to be increased (resp. decreased); we will mention it when it is the case. 
Consider $|z| < \eps$. 
Since the choice of the parameters implies the convergence of the cluster expansion of $\Xi(\sfw_z)$, it is the case that $\Xi(\sfw_z) \neq 0$, so that Lemma~\ref{lem:plus_minus_one_expansion} implies that $\phi_{\T_N, p+z}[F(\om)]$ is well defined and satisfies
\[
\frac{\phi_{\T_N, p+z}[F(\om)]}{\phi_{\T_N, p}[F(\om)]}  = \frac{1}{\phi_{\T_N,p}[F(\om)]}\sum_{\{\bar\gamma\}\in \mathfrak{I}\cap I(\Delta)} G_z(\{\bar\gamma\}) \frac{\Xi(\sfw_z^{\{\bar\gamma\}})}{\Xi(\sfw_z)}.
\]
We fix a constant $K > 0$, the value of which will be carefully set at the end of the proof, and split the sum as follows:
\begin{multline*}
\sum_{\{\bar\gamma\}\in \mathfrak{I} \cap I(\Delta)} G_z(\{\bar\gamma\})\frac{\Xi(\sfw_z^{\{\bar\gamma\}})}{\Xi(\sfw_z)} = 
\\
\underbrace{\sum_{\{\bar\gamma\}\in \mathfrak{I}\cap I(\Delta) \atop |\mathsf{Tr}(\{\bar\gamma\})| \leq K|\Delta|} G_z(\{\bar\gamma\})\frac{\Xi(\sfw_z^{\{\bar\gamma\}})}{\Xi(\sfw_z)} }_{S^1_K}+ \underbrace{\sum_{\{\bar\gamma\}\in \mathfrak{I}\cap I(\Delta) \atop |\mathsf{Tr}(\{\bar\gamma\})| > K|\Delta| } G_z(\{\bar\gamma\})\frac{\Xi(\sfw_z^{\{\bar\gamma\}})}{\Xi(\sfw_z)}}_{S_K^2}.
\end{multline*}
We will bound $S_K^1$ and $S_K^2$ separately, and it will be observed that the main contribution comes from the term $S_K^1$. 

\medskip
\noindent\textbf{Bound on $S_K^1$.}
Observe that 
\begin{align*}
|S_K^1| &\leq \sum_{\{\bar\gamma\}\in \mathfrak{I}\cap I(\Delta) \atop |\mathsf{Tr}(\{\bar\gamma\})| \leq K|\Delta|} |G_z(\{\bar\gamma\})| \Big|\frac{\Xi(\sfw_z^{\{\bar\gamma\}})}{\Xi(\sfw_z)} \Big| \\
&\leq \sum_{\{\bar\gamma\}\in \mathfrak{I}\cap I(\Delta) \atop |\mathsf{Tr}(\{\bar\gamma\})| \leq K|\Delta|} |G_z(\{\bar\gamma\})| \exp(c_\eps |\mathsf{Tr}(\{\bar\gamma\}) | ) \\
&\leq \exp(2c_\eps K|\Delta| )\sum_{\{\bar\gamma\}\in \mathfrak{I}\cap I(\Delta) \atop |\mathsf{Tr}(\{\bar\gamma\})| \leq K|\Delta|} |G_z(\{\bar\gamma\})| \exp(- c_\eps |\mathsf{Tr}(\{\bar\gamma\}) | ) \\
&\leq \exp(2c_\eps K|\Delta| ) \phi_{\T_N,p}[F(\om)].
\end{align*}
In the second line, we used Lemma~\ref{lem:ratio_polymers}. 
In the third line we used that $|\mathsf{Tr}(\{\bar\gamma\})| \leq K|\Delta|$ due to the definition of $S^1_K$.
Finally we used Lemma~\ref{lem:bound_G} in the last line (the fact that the sum is truncated even improves the bound provided by Lemma~\ref{lem:bound_G} as this sum has positive terms). 
Observe that this bound is valid for any value of the constant $K$.

We thus proved that 
\begin{equation}\label{equ:bound_S1}
|S^1_K|\leq\exp(2c_\eps K|\Delta| )\phi_{\T_N,p}[F(\om)] .
\end{equation}

\medskip
\noindent\textbf{Bound on $S_K^2$.}
The bound we get is more combinatorial in essence, and in particular does not use the control on $G_z(\{\bar\gamma\})$ provided by Lemma~\ref{lem:bound_G}.  
By definition of $G_z(\{\bar\gamma\})$, $S^2_K$ can be expanded under the following form:
\begin{align*}
S^2_K =
 \sum_{\{\bar\gamma\}\in \mathfrak{I}\cap I(\Delta) \atop |\mathsf{Tr}(\{\bar\gamma\})| > K|\Delta|}
 \sum_{A\subset \mathsf{Tr}(\{\bar\gamma\}) }
 \P[F(\om) \prod_{x\in A} f_{z,x}(\om)\ind{\calC_{A \cup \Delta} = \mathsf{Tr}(\{\bar\gamma\})}]
 \frac{\Xi(\sfw_z^{\{\bar\gamma\}})}{\Xi(\sfw_z)}. 
\end{align*}

As previously we use the triangular inequality to control this sum, and the ratio $\vert \frac{\Xi(\sfw_z^{\{\bar\gamma\}})}{\Xi(\sfw_z)} \vert$ is controlled by Lemma~\ref{lem:ratio_polymers}. 
Now, observe that due to the definition of $C_\eps^L$ provided by~\eqref{eq:unif_bound_on_f}, we have 
\begin{align*}
|S^2_K| 
\leq
\sum_{\{\bar\gamma\}\in \mathfrak{I}\cap I(\Delta) \atop |\mathsf{Tr}(\{\bar\gamma\})| > K|\Delta|}
\exp(c_\eps |\mathsf{Tr}(\{\bar\gamma\})|)
\sum_{A\subset \mathsf{Tr}(\{\bar\gamma\}) }  (C^L_\eps)^{|A|}
\P[F(\om) \ind{\calC_{A \cup \Delta} = \mathsf{Tr}(\{\bar\gamma\})}].
\end{align*}
Since $\Vert F \Vert_{\infty} = 1$, we then get
\[
|S^2_K| 
\leq 
\sum_{\{\bar\gamma\}\in \mathfrak{I}\cap I(\Delta) \atop |\mathsf{Tr}(\{\bar\gamma\})| > K|\Delta|}
\exp(c_\eps |\mathsf{Tr}(\{\bar\gamma\})|)
\sum_{A\subset \mathsf{Tr}(\{\bar\gamma\}) }  (C^L_\eps)^{|A|}
\P[|\calC_{A \cup \Delta}| \geq |\mathsf{Tr}(\{\bar\gamma\})|].
\]

By Lemma~\ref{lem:exp_dec_cluster_sets}, 
\[\P[|\calC_{A \cup \Delta}| \geq |\mathsf{Tr}(\{\bar\gamma\})|] \leq \exp(- c_L |\mathsf{Tr}(\{\bar\gamma\})| + a_L(|\Delta|+|A|)).\] 
We first sum this estimate over $A\subset \mathsf{Tr}(\{\bar\gamma\})$ and use the binomial formula to obtain: 

\[
|S^2_K| 
\leq 
\sum_{\{\bar\gamma\}\in \mathfrak{I}\cap I(\Delta) \atop |\mathsf{Tr}(\{\bar\gamma\})| > K|\Delta|}
\exp\Big( \big[ c_\eps - c_L+ \log(1+C_\eps^L\e^{a_L}) \big]   |\mathsf{Tr}(\{\bar\gamma\})| \Big)
\exp(a_L |\Delta|).
\]

At this stage, it is useful to use the finite-energy property\footnote{There exists a configuration $\om_0$ in $\{0,1\}^{\Delta}$ such that for any percolation configuration $\om$ on $\T_N$ coinciding with $\om_0$ on $\Delta$, then $F(\om) = 1$. Thus, $\phi_{\T_N,p}[F(\om)] \geq \phi_{\T_N,p}[ \om \overset{\Delta}{\equiv} \om_0 ] \geq \exp(-c_{\mathsf{FE}}|\Delta|)$ by a classical finite energy argument for the measure $\phi_p$ with $p \in (0,1)$. } of the measure $\phi_{p}$, and to introduce a constant $c_{\mathsf{FE}} > 0$, that depends only on $p$ and $q$, such that
\[  \phi_{\T_N,p}[F(\om)] > \exp(- c_{\mathsf{FE}} |\Delta|). \] 
That way, one has that

\begin{multline*}
(\phi_{\T_N,p}[F(\om)] )^{-1}
|S^2_K| 
\leq \\
\sum_{\{\bar\gamma\}\in \mathfrak{I}\cap I(\Delta) \atop |\mathsf{Tr}(\{\bar\gamma\})| > K|\Delta|}
\exp\Big( \big[ c_\eps - c_L+ \log(1+C_\eps^L\e^{a_L}) \big]   |\mathsf{Tr}(\{\bar\gamma\})| \Big)
\exp((a_L + c_{\mathsf{FE}}) |\Delta|).
\end{multline*}

We further continue to bound this sum, by noticing the fact that $|\Delta| < \frac{1}{K}|\mathsf{Tr}(\{\bar\gamma\})|$ to deal with the term $(a_L+c_{\mathsf{FE}})|\Delta| <(\frac{a_L}{K} + \frac{c_{\mathsf{FE}}}{K})|\mathsf{Tr}(\{\bar\gamma\}) |$. 
We then resum over the possible values of $|\mathsf{Tr}(\{\bar\gamma\})|$, and use the combinatorial Lemma~\ref{lem:def_c(d)}, which bounds the number of independent families of polymers intersecting $\Delta$ and having trace of cardinality $n$, by $\exp((c(d)+\log(4))n)$. 
We finally obtain: 

\[
(\phi_{\T_N,p}[F(\om)] )^{-1}
| S^2_K |
\leq
\sum_{n > K|\Delta|}
\exp\big(n( c_\eps - c_L + \log(1+C_\eps^L\e^{a_L} ) + \tfrac{a_L + c_{\mathsf{FE}}}{K} + c(d) + \log(4) )\big). 
\]
 
Up to potentially increasing the value of $L$, assume that it is chosen so large that $c_L > c(d) + \log(4) + 2$. 
Now, the fact that both $C_\eps^L, c_\eps \rightarrow 0$ as $\eps \rightarrow 0$ implies that one can choose $\delta > 0$ so small that for any $\eps < \delta,  c_\eps + \log(1+C_\eps^L\e^{a_L} ) < 1/2$.
Finally, select $K> 0$ so large that  $\frac{a_L+c_{\mathsf{FE}}}{K} < 1/2$.
All in all, we obtain that:

\[
(\phi_{\T_N,p}[F(\om)] )^{-1} | S^2_K |
\leq
\sum_{n > K|\Delta|} \exp(-n) \leq 1 \leq \exp(2c_\eps K|\Delta|) .
 \]

Thus, for this choice of $L, \eps$, and $K$, we obtain that:  

\begin{equation}\label{equ:bound_S^2}
 |S^2_K|\leq \exp(2c_\eps K|\Delta|)\phi_{\T_N,p}[F(\om)] .
 \end{equation}

\medskip
\noindent\textbf{Conclusion.}
Consider the choice of $L, \eps < \delta$ and $K$ made at the previous paragraph. 
Combining~\eqref{equ:bound_S1} and~\eqref{equ:bound_S^2} directly implies that: 
\[
\left\vert  \frac{\phi_{\T_N, p+z}[F(\om)] }{\phi_{\T_N, p}[F(\om)]}\right\vert \leq \frac{|S^1_K| + |S^2_K|}{\phi_{\T_N,p}[F(\om)]} \leq 2 \exp(2c_\eps K|\Delta| ).
\]

As the values of $L,K$ and $\eps < \delta_0$ are fixed independently of $N$, and as $c_\eps$ tends to 0 when $\eps$ tends to 0, the proof of the proposition is completed by altering the value of $c_\eps$.
\end{proof}

 \section{Analyticity of the susceptibility and of the magnetisation}\label{sec:mag_susc}

We derive from the previous section the analyticity of $\chi$ and of $\theta$ in the suitable range of parameters. 
As we shall see, once the item (2) of Theorem~\ref{thm:local_analyticity} has been established, those two properties fall under the scope of the method developed in~\cite{PG_analyticity} in the framework of Bernoulli percolation.

\subsection{Analyticity of $\chi$ in the subcritical regime}

As an easy example, we start by proving that the susceptibility of the random-cluster model is analytic in the subcritical regime, giving the proof of Theorem~\ref{thm:analyticity_susceptibility}. 
We invite the reader unfamiliar with the work of~\cite{PG_analyticity} to have a look at its proof to understand the importance of the item (2) of Theorem~\ref{thm:local_analyticity} in the analysis. 

\begin{proof}
Let $p_0<p_c$, and observe that $(d,p_0,q) \in \calG_{\mathsf{FK}}$ for this choice of parameters. We denote by $\calC_0$ the connected component of 0. As $p_0<p_c$, it follows from~\cite{DCRT_sharpness} and a classical renormalisation procedure that the volume of $\calC_0$ has exponential tails under the random-cluster measure.
Thus, we may choose $\eps > 0$ so small that 
\[ \sum_{n\geq1} n\e^{(2d) c_\eps n}\phi_{p_0,q}[|\calC_0| = n] < \infty, \]
where $c_\eps$ was constructed in Theorem~\ref{thm:local_analyticity}.

Now, observe that for all $p \in (0,p_c)$,
\begin{align*}
\chi_{\mathsf{FK}}(p) = \phi_{p,q}[|\calC_0|] =  \sum_{n \geq 1} n \sum_{C \ni 0 \atop {|C| = n \atop C \text{ connected}}} \phi_{p,q}[\ind{\calC_0 = C}(\om)].  
\end{align*}

For any finite connected set $C$ containing 0, observe that the function $\ind{\calC_0 = C}$ is local. 
Possibly after reducing $\eps$, we may thus consider the analytic extension $\phi_{p_0+z,q}[\ind{\calC_0 = C}(\om)]$ constructed in Theorem~\ref{thm:local_analyticity}, and formally define for $z \in \D_\eps(0)$
\begin{align}\label{eq:def_extension_chi}
\chi_{\mathsf{FK}}(p_0+z) \coloneqq  \sum_{n \geq 1} n \sum_{C \ni 0 \atop {|C| = n \atop C \text{ connected}}} \phi_{p_0+z,q}[\ind{\calC_0 = C}(\om)]. 
\end{align} 
This series of analytic functions defines an analytic function provided that it converges uniformly on $\D_\eps(0)$ \cite[Chapter 5.1.1]{Ahlfors_comp_analysis}. Observe that if $C$ has volume $n$, the support of $\ind{\calC_0 = C}$ has volume at most $2dn$. 
Item (2) of Theorem~\ref{thm:local_analyticity} can thus be invoked to obtain:
\[  n  \sum_{C \ni 0 \atop {|C| = n \atop C \text{ connected}}} \big\vert \phi_{p_0+z,q}[\ind{\calC_0 = C}(\om)] \big\vert \leq  n\exp(2d c_\eps n)\phi_{p_0,q}[|\calC_0| = n]. \]
By choice of $\eps$, we get the following uniform bound
\begin{align}
	\sum_{n \geq 1} n \sup\limits_{|z| < \eps} \sum_{C \ni 0 \atop {|C| = n \atop C \text{ connected}}} \big| \phi_{p_0+z,q}[\ind{\calC_0 = C}(\om)] \big|  < \infty.
\end{align}
Hence, by the Weierstrass M-test, the series given by \eqref{eq:def_extension_chi} is uniformly convergent on $\D_\eps(0)$. It follows that this series defines an analytic extension of $p \mapsto \chi_{\mathsf{FK}}(p)$ around $p_0$. 

The result for $\chi_{\mathsf{Potts}}$ follows by the identity $\chi_{\mathsf{Potts}}(\beta) = \tfrac{q-1}{q}\chi_{\mathsf{FK}}(p(\beta))$ for any $\beta < \beta_c$, where we recall that $p(\beta) = 1- \e^{-\beta}$. 
This identity is an easy consequence of the Edwards--Sokal coupling. 
\end{proof}

\begin{remark}
In the case of the Ising model, a simpler proof can be derived using~\cite{Ott_Ising}, and observing that in the subcritical regime, the Ising susceptibility at $\beta$ is the second partial derivative in $h$ of the free energy evaluated at $(\beta, h=0)$. 
As~\cite{Ott_Ising} shows the joint analyticity of this quantity in $(\beta,h)$ when $\beta<\beta_c$, this gives a proof of the analyticity of the susceptibility in the case of the Ising model. 
\end{remark}

\subsection{Analyticity of $\theta$}

The previous example stresses the importance of the exponential upper bound given by Theorem~\ref{thm:local_analyticity} in order to prove the analyticity of percolation quantities that can be expressed as a sum of probabilities of local events. Unfortunately, a naive adaptation of the proof of Proposition~\ref{thm:analyticity_susceptibility} is doomed to fail for proving the analyticity of $\theta$ in the supercritical regime, as the quantity $\phi_{p,q}[|\calC_0| = n]$ is expected to decay as $\exp(-cn^{\frac{d-1}{d}})$ (see~\cite{CCN_sharpness} for the ``historical'' proof in the case of Bernoulli percolation). Indeed, this bound is not good enough for the above-mentioned strategy to work, because one would require an exponential bound.

In~\cite[Section 8]{PG_analyticity}, the authors fix this issue by introducing a suitable notion of \textit{separating components} that separate 0 from infinity, and by showing that the probability that the percolation configuration exhibits some \textit{separating component} of size $n$ decays exponentially in $n$ after a suitable coarse-graining. This gives an expression of $\theta$ as a sum of probabilities of local events with the required exponential bound. 

The argument of~\cite{PG_analyticity} is mostly based on combinatorial arguments, which do not rely on the independence of the edges in Bernoulli percolation. 
In the proof below, we give the details on how to extend their argument to the general FK-percolation case by using Theorem~\ref{thm:local_analyticity}.
As most of the argument is the same as~\cite{PG_analyticity}, the proof is not self-contained, and we point out to the relevant statements of~\cite{PG_analyticity} that we use.

\begin{proof}[Sketch of proof of Theorem \ref{thm:analyticity_magnetisation}]
Let us take $(d,p_0,q) \in \calG_{\mathsf{FK}}$. If $p_0 < p_c$ then $\theta(p_0)$ is $0$ in a neighbourhood of $p_0$, so the result for $\theta$ is obviously true in this case. So, let us suppose that $p_0 > p_c$. 
Before starting with the argument, let us recall that in the regime under consideration, the model possesses the \textit{local uniqueness} property (stated in point (2.b) of Definition \ref{def:good_parameters}). 
In the proof, this property will be used in a classical way to perform a coarse-graining argument. 
We fix a positive integer $N$ and we call $\bbLN$ the coarse-grained lattice with mesh $N$, endowed with the $*$-connectivity.
This is the lattice whose vertex set is given by $N \bbZ^d$ and where two points are connected if and only if they are at $L^\infty$ distance equal to $N$ (with $\bbZ^d$ seen as embedded in $\bbR^d$). 

For a given configuration $\om$ on $\bbZ^d$,~\cite{PG_analyticity} introduces the notion of \emph{separating component}.
Roughly, a separating component corresponds to a connected component of bad boxes, together with their boundary which consists of good boxes,
where a good (resp. bad) box is a box in which some \textit{local uniqueness} event occurs (resp. does not occur). 
The precise definition of a separating component is not needed here, but we refer the reader to section 8.2 of~\cite{PG_analyticity}. 
We will just use three claims about \textit{separating components}, stated below. 
These claims are proven in~\cite{PG_analyticity} for Bernoulli percolation and we explain how to prove them in the general FK-percolation case.\\


The first claim is a simple geometric observation that follows from the definition of a separating component.
In what follows, when $S$ is a connected set of $\bbLN$, we say that $S$ \emph{occurs} if $S$ is a separating component for $\om$. \\

\textbf{Claim A.} The event that $S$ occurs is a local event with support of size $C N^d |S|$ (where $C$ is a constant that only depends on the dimension).\\

In order to state the next claim, we define $MS^N_n$ as the set of finite collections of pairwise disjoint $\bbLN$-connected sets $\{S_1,S_2,\dots,S_k\}$ such that $|S_1| + |S_2| + \dots + |S_k| = n$. With a slight abuse of notation we still write $S$ for a generic element of $MS^N_n$. We say that $S$ occurs in a configuration $\om$ if all the elements of $S$ are separating components in $\om$. \\

\textbf{Claim B.} There exists $N = N(p_0) > 0$, $t = t(p_0) > 0$ and an interval $(a,b)$ containing $p_0$ such that
\begin{align*}
\sum\limits_{S \in MS^N_n} \phi_{p,q}[ S \mbox{ occurs}] \leq e^{-tn},
\end{align*}
for every $n \geq 1$ and every $p \in (a,b)$.\\

The careful reader may check that Claim B can be obtained by a simple adaptation of the proof of \cite[Lemma 8.6]{PG_analyticity}. 
Indeed, the only change required is to use the uniformity in boundary conditions of the \textit{local uniqueness} property (2.b) in place of the independence property of Bernoulli percolation.
Claim B then follows by a standard Peierls' argument. \\

\textbf{Claim C.} Let $N$ and $(a,b)$ be as in the previous claim. We denote by $D_N$ the event that the cluster of $0$ has a diameter smaller than $N/5$. Then it holds that
\begin{align*}
1 - \theta(p) = \phi_{p,q}[D_N] - \phi_{p,q}[\text{Some separating component occurs}, D_N^c],
\end{align*}
for every $p \in (a,b)$.\\

The claim above is stated in \cite[equation (8.5)]{PG_analyticity} as a consequence of \cite[Lemmas 8.2 and 8.4]{PG_analyticity} whose proof only rely on graph-theoretic arguments and on Claim B. 
Thus, Claim C is also valid in the general FK-percolation case. 
Let us now combine the claims to obtain an analytic extension of $\theta$. 
Thus, choose $N$ as fixed by Claim B. 
We use the same resummation as in the Bernoulli case, but Theorem \ref{thm:local_analyticity} is now required to prove the analyticity of the resummation of $\theta$.

Since two intersecting separating components cannot occur at the same time, it follows from Claim B and the inclusion-exclusion principle that 
\begin{align}\label{eq:extension_theta_sep_comp}
	\phi_{p,q}[\mbox{Some separating component} &\mbox{ occurs}, D_N^c] \\ 
	&= \sum\limits_{n \geq 1} \sum\limits_{S \in MS_n^N} (-1)^{c(S)+1} \phi_{p,q}[S \mbox{ occurs}, D_N^c],
\end{align}
where $c(S)$ denotes the number of separating components of $S$.
By item (1) of Theorem \ref{thm:local_analyticity} and Claim $A$, there exists a complex neighbourhood of $p$ in which all the terms in the sum above can be extended as analytic functions. 
Moreover, item (2) of Theorem \ref{thm:local_analyticity} combined with Claims A and B yields the existence of $\eps > 0$ such that, for all $|z| < \eps$ and $n > 0$,
\begin{align*}
	\sum\limits_{S \in MS_n^N}|\phi_{p_0+z,q}[S \mbox{ occurs}, D_N^c]| \leq \exp(-(t/2) n),
\end{align*}
where $N$ and $t$ are positive constants given by Claim B. So,
\begin{align}
	\sum\limits_{n \geq 1} \sup\limits_{|z| < \eps} \sum\limits_{S \in MS_n^N}|\phi_{p_0+z,q}[S \mbox{ occurs}, D_N^c]| < \infty.
\end{align}

Hence, by the Weierstrass M-test, the series given by \eqref{eq:extension_theta_sep_comp} is uniformly convergent on $\D_\eps(0)$. So, it defines an analytic extension of $\phi_{p,q}[\mbox{Some separating component occurs}, D_N^c]$ in a complex neighbourhood of $p_0$. By Theorem \ref{thm:local_analyticity}, the function $\phi_{p,q}[D_N]$ also admits a analytic extension to a neighbourhood of $p_0$ since $D_N$ is a local event. It follows from Claim~C that $p \mapsto \theta(p)$ admits an analytic extension around $p_0$. 

The result for the magnetisation $m^*$ in the Potts model is a direct consequence of the result for $\theta$ and the Edwards-Sokal coupling relation~\eqref{equ:edwards_sokal} relating $m^*$ with $\theta$.

\end{proof}

\subsection{Analyticity of multi-point correlation functions}

We hope to have convinced the reader that once Theorem~\ref{thm:local_analyticity} holds, all the arguments of~\cite{PG_analyticity} can be replicated \emph{mutatis mutandis}, by relying on uniform local uniqueness in place of independence of Bernoulli percolation. We thus simply refer to the latter work for a proof of Theorem~\ref{thm:multi_point_correlations}.

\begin{proof}[Sketch of proof of Theorem~\ref{thm:multi_point_correlations}] 
We exactly follow the argument of~\cite[Theorem 8.10]{PG_analyticity}, where the same statement is established for Bernoulli percolation.  
All the combinatorial statements can be imported, and the independence of Bernoulli percolation is replaced by the uniformity of the probability of the local uniqueness in the boundary conditions given by (2b). 
Finally, we use Theorem~\ref{thm:local_analyticity} in the end of the proof to provide uniformly bounded extensions of $\tau$ and $\tau^\mathsf{f}$. 
\end{proof}

\vspace{0,5cm}
\subsection*{Acknowledgements}
The authors wish to thank Sébastien Ott for suggesting a different resummation than our original one, and thus improving the presentation of Lemma~\ref{lem:plus_minus_one_expansion}.
We also thank Christoforos Panagiotis for many insightful discussions related to this work. 
The research of LD is supported by the French National Research Agency (ANR), project number \texttt{ANR-23-CPJ1-0150-01}. The research of LG is supported by the Swiss National Science Foundation.

\newpage
\bibliographystyle{amsalpha}
\bibliography{biblioanalyticity.org.tug}
\newpage
\setcounter{section}{1}
\setcounter{counterEnvMain}{1}
\setcounter{counterEnvDefault}{1}

\renewcommand{\thesection}{\Alph{section}}

\renewcommand{\thecounterEnvMain}{\thesection.\arabic{counterEnvMain}}
\renewcommand{\thecounterEnvDefault}{\thesection.\arabic{counterEnvDefault}}

\setcounter{counterEnvMain}{0}
\setcounter{counterEnvDefault}{0}
\section*{Appendix: cluster expansion tools}

We recall some classical results about cluster expansion techniques, together with technical lemmas that are used throughout the paper. 
The statements are not as general as in~\cite{velenik_book}, but are rather tailored for our setting. 
The framework is that of~\emph{non-intersecting} polymer models on $\T_N^L$, with activities depending on a parameter.

Recall the following definitions, introduced in Definition \ref{def:polymer}: 

A \emph{polymer} $\gamma \subset \T_N^L$ is a non-empty connected subset of $\T_N^L$. The set of polymers is denoted by $\mathsf{P}_N^L$ and the set of \emph{non-intersecting} families of polymers is denoted by $\mathsf{NI}$.
Also recall that $c(d) > 0$ is a constant such that the cardinality of the set of polymers $\gamma$ satisfying $0 \in \gamma$ and $|\gamma| = n$ is upper bounded by $\exp(c(d)n)$. 

An \emph{activity function} is a function $\sfw : \mathsf{P}_N^L \rightarrow \C$, and we will consider families of activity functions $\sfw_z$ indexed by $z \in \bbC$. 
Recall that the polymer partition function is defined as 
\[
\Xi(\sfw_z) = \sum_{\{\gamma\} \in \mathsf{NI}} \prod_{\gamma \in \{\gamma\}} \sfw_z(\gamma). 
\]

Finally, for any $n \geq 0$, the $n$-th \emph{Ursell function} is classically defined as follows. For an ordered family of polymers $(\gamma_1, \cdots, \gamma_n)$, define
\[
\varphi_n(\gamma_1, \dots, \gamma_n) \coloneqq  \frac{1}{n!}\sum_{G \subset K_n} \prod_{\{ij\} \in G} \{-\ind{\gamma_i \cap \gamma_j \neq \emptyset}\}.
\]
The sum is taken over all the \emph{connected} subgraphs of the complete graph $K_n$ with $n$ vertices. 

In this setting, the main theorem is the following\footnote{We insist that this result is much less general than the full power of the cluster expansion given by~\cite{velenik_book}. In particular, the hardcore interaction of the polymers and the convergence criterion that we propose are far from being optimal.}

\begin{theorem}\label{thm:cluster_expansion}
Assume that for $\epsilon > 0$ small enough, there exists a constant $C_\eps > 0$, going to 0 when $\eps$ tends to 0, such that for any $|z| < \eps$, for every polymer $\gamma \in \sfP^L_N$, 
\[
|\sfw_z(\gamma)| \leq C_\eps \exp( - |\gamma|(1 + c(d))).
\]
Then, there exists $\eps > 0$ such that:
\begin{itemize}
\item For all $|z| < \eps$, the following equality is true, and the sum appearing in the right-hand side converges absolutely:
\[
\Xi(\sfw_z) = \exp\big( \sum_{n\geq 1}\sum_{\gamma_1, \cdots, \gamma_n \in \mathsf{P}_N^L} \varphi_n(\gamma_1, \dots, \gamma_n) \prod_{k = 1}^{n} \sfw_z(\gamma_k) \big). 
\]
We then say that $\Xi(\sfw_z)$ admits a \textit{convergent cluster expansion}.
\item Moreover, there exists $c_\eps > 0$, going to 0 when $\eps$ goes to 0 such that for any $|z| < \eps$, and all $N \in \bbN$,
\[
\sum_{n\geq 1}\sum_{\gamma_1, \cdots, \gamma_n \in \mathsf{P}_N^L} \big|\varphi_n(\gamma_1, \dots, \gamma_n)\big| \prod_{k = 1}^{n} \big|\sfw_z(\gamma_k)\big| \leq c_{\eps} |\bbT_N^L| . 
\]
\end{itemize}
\end{theorem}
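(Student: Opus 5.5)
The plan is to derive both items from the standard Kotecký–Preiss convergence criterion for hard-core polymer gases, specialised to the polymers $\sfP^L_N$ with incompatibility relation $\gamma\cap\gamma'\neq\emptyset$. The tool is the Kotecký–Preiss theorem: if there is a function $a:\sfP^L_N\to\R_{\ge 0}$ such that for every polymer $\gamma$
\[
\sum_{\gamma' : \gamma'\cap\gamma\neq\emptyset} |\sfw_z(\gamma')|\,\e^{a(\gamma')} \leq a(\gamma),
\]
then the cluster expansion of $\log\Xi(\sfw_z)$ converges absolutely. In addition, for every $\gamma$,
\[
\sum_{n\geq1}\sum_{\gamma_1,\dots,\gamma_n \atop \exists k,\ \gamma_k\cap\gamma\neq\emptyset}|\varphi_n(\gamma_1,\dots,\gamma_n)|\prod_{k}|\sfw_z(\gamma_k)| \leq a(\gamma).
\]
I will take $a(\gamma)=|\gamma|$ and verify the criterion from the hypothesis $|\sfw_z(\gamma)|\le C_\eps\e^{-|\gamma|(1+c(d))}$.

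The verification goes as follows. Fix $\gamma$. Every $\gamma'$ meeting $\gamma$ contains some vertex $x\in\gamma$. By the lattice-animal bound (translation invariance on the torus), there are at most $\e^{c(d)m}$ polymers of size $m$ containing $x$. Hence
\[
\sum_{\gamma'\cap\gamma\neq\emptyset}|\sfw_z(\gamma')|\e^{|\gamma'|}\le |\gamma|\sum_{m\ge1}\e^{c(d)m}\,C_\eps\e^{-m(1+c(d))}\e^{m}=|\gamma|\,C_\eps\sum_{m\ge1}1 .
\]
This diverges, so I need a little more room. I will take $a(\gamma)=\eta|\gamma|$ with a small $\eta\in(0,1)$, say $\eta=1/2$. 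The sum then becomes $|\gamma|C_\eps\sum_m \e^{-m/2}\le 3C_\eps|\gamma|$. Choosing $\eps$ so small that $3C_\eps\le \tfrac12$ gives the bound $\le \tfrac12|\gamma|=a(\gamma)$. This is the only delicate point: the exponent $1+c(d)$ in the hypothesis leaves exactly one unit of decay, and part of it is spent on the weight $\e^{a}$. The criterion therefore holds uniformly for $|z|<\eps$ and uniformly in $N$.

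The first item is then the conclusion of Kotecký–Preiss, giving absolute convergence and $\Xi(\sfw_z)=\exp(\cdots)$. For nonvanishing and analyticity one can also argue via the identity at $z$ with the activities scaled by $t\in[0,1]$, but the KP theorem already gives the identity directly.

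For the second item, I will bound the full sum by summing over root vertices. Every cluster $(\gamma_1,\dots,\gamma_n)$ with $\varphi_n\neq0$ is connected, so I can charge it to a vertex $x\in\gamma_1$. Applying the per-vertex estimate with the single-site "polymer" $\{x\}$ bounds the contribution of clusters touching $x$ by a quantity that I need to be small, not just $\le 1/2$.

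To get $c_\eps\to0$, I will use a sharper form of the KP bound. Since every term contains at least one factor $|\sfw_z(\gamma_k)|\le C_\eps\e^{-|\gamma_k|(1+c(d))}$, I will run the criterion for the rescaled activities $\sfw_z/\sqrt{C_\eps}$. These still satisfy the criterion when $\sqrt{C_\eps}$ is small. Pulling out one factor $\sqrt{C_\eps}$ per cluster then shows that the total over clusters touching $x$ is at most $\sqrt{C_\eps}\cdot\tfrac12$. More cleanly: by homogeneity, the cluster sum for activities $\lambda\sfw$ with $|\lambda|\le 1$ is a power series in $\lambda$ with no constant term, bounded by $a(\{x\})$ at $|\lambda|=1/\sqrt{C_\eps}$. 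Schwarz's lemma then gives a bound of $\tfrac12\sqrt{C_\eps}$ at $\lambda=1$. Summing over the $|\T_N^L|$ vertices yields the claim with $c_\eps=\tfrac12\sqrt{C_\eps}\to0$.
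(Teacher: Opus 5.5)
Your proof is correct. The first item is handled exactly as in the paper: the Kotecký--Preiss criterion with $a(\gamma)=\tfrac12|\gamma|$ is verified via the lattice-animal bound, with $\eps$ chosen so that a fixed multiple of $C_\eps$ is at most $1/2$.

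For the second item you take a different route.

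\emph{The paper's route.} It uses the rooted form of the conclusion (as in Friedli--Velenik, Theorem 5.4):
\[
|\sfw_z(\gamma_1)|\bigl(1+\textstyle\sum_{n\ge2}n\sum_{\gamma_2,\dots,\gamma_n}|\varphi_n|\prod_{k\ge2}|\sfw_z(\gamma_k)|\bigr)\le|\sfw_z(\gamma_1)|\e^{a(\gamma_1)} .
\]
It then sums over all roots $\gamma_1$, and bounds $\sum_{\gamma_1}|\sfw_z(\gamma_1)|\e^{a(\gamma_1)}$ by the criterion computation with $\gamma'=\T_N^L$. Because the root's own activity is kept, the smallness $c_\eps=C_\eps(1-\e^{-1/2})^{-1}$ comes for free.

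\emph{Your route.} You use only the weaker form ``clusters incompatible with $\gamma$ sum to at most $a(\gamma)$'', applied to $\gamma=\{x\}$. This gives just an $O(1)$ bound per vertex. You then recover smallness by running the criterion for the rescaled activities $\sfw_z/\sqrt{C_\eps}$. Since the absolute cluster sum is a series in the scaling parameter with nonnegative coefficients and no constant term, this is valid and yields $c_\eps=\tfrac12\sqrt{C_\eps}$.

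\emph{Comparison.} Your version treats Kotecký--Preiss purely as a black box. The cost is an extra step and a worse rate ($\sqrt{C_\eps}$ instead of $C_\eps$).

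Three small remarks:
\begin{itemize}
\item Schwarz's lemma is unnecessary. For nonnegative coefficients $b_n$ and $R\ge1$, one has $\sum_{n\ge1}b_n\le R^{-1}\sum_{n\ge1}b_nR^n$.
\item The usual derivation of the incompatibility bound, by symmetrising over which polymer is the root, already gives $\sum_{\text{clusters touching }x}\le\sum_{\gamma_1\ni x}|\sfw_z(\gamma_1)|\e^{a(\gamma_1)}=O(C_\eps)$. This would make the rescaling superfluous and is essentially the paper's argument.
\item In your last paragraph, ``$|\lambda|\le 1$'' should read $|\lambda|\le 1/\sqrt{C_\eps}$.
\end{itemize}
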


The following result is particularly useful in this work. 
To the best knowledge of the authors, the proof proposed below is new. 

\begin{corollary}\label{thm:cluster_expansion_inter_A}
Assume that for $\epsilon > 0$ small enough, there exists a constant $C_\eps > 0$, going to 0 when $\eps$ tends to 0, such that for any $|z| < \eps$, for every polymer $\gamma \in \sfP^L_N$, 
\[
|\sfw_z(\gamma)| \leq C_\eps \exp( - |\gamma|(2 + c(d))).
\]
Then, there exists $c_\eps > 0$, going to 0 when $\eps$ goes to 0 such that for any $|z| < \eps$, for any set $A \subset \T_N^L$, 
\[
\sum_{n\geq 1}\sum_{\gamma_1, \cdots, \gamma_n \in \mathsf{P}_N^L \atop A \cap (\gamma_1 \cup \cdots \cup \gamma_n) \neq \emptyset} \big\vert\varphi_n(\gamma_1, \dots, \gamma_n)\big\vert \prod_{k = 1}^{n} \big\vert \sfw_z(\gamma_k)\big\vert \leq c_\eps |A|.
\]
\end{corollary}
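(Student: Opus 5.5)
The plan is to reduce the sum over clusters touching $A$ to $|A|$ copies of a sum over clusters touching a single point. Then we use the standard Kotecký--Preiss type bound, in its tree-graph form, for clusters containing a given polymer.

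First, every cluster $(\gamma_1,\dots,\gamma_n)$ with $A\cap(\gamma_1\cup\cdots\cup\gamma_n)\neq\emptyset$ has some $\gamma_k$ containing a point $x\in A$. A union bound over $x\in A$ and over the index $k$ gives
\[
\sum_{n\geq1}\sum_{\gamma_1,\dots,\gamma_n \atop A\cap(\cup\gamma_i)\neq\emptyset}|\varphi_n|\prod|\sfw_z(\gamma_k)| \leq \sum_{x\in A}\sum_{n\geq 1} n\sum_{\gamma_1\ni x}\sum_{\gamma_2,\dots,\gamma_n}|\varphi_n(\gamma_1,\dots,\gamma_n)|\prod_{k}|\sfw_z(\gamma_k)|.
\]
Here I have used that $\varphi_n$ is symmetric, so the constraint may be put on $\gamma_1$ at the cost of a factor $n$. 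It therefore suffices to bound the inner quantity by some $c_\eps\to0$, uniformly in $x$ and $N$.

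Second, I would invoke the classical cluster expansion estimate; this is exactly what underlies the proof of Theorem~\ref{thm:cluster_expansion}. Let $a(\gamma)=|\gamma|$. Assume the Kotecký--Preiss condition
\[
\sum_{\gamma'\cap\gamma\neq\emptyset}|\sfw_z(\gamma')|e^{a(\gamma')}\leq a(\gamma)
\quad\text{for all }\gamma .
\]
Then for every polymer $\gamma_1$,
\[
\sum_{n\geq1} n\sum_{\gamma_2,\dots,\gamma_n}|\varphi_n(\gamma_1,\dots,\gamma_n)|\prod_{k\ge2}|\sfw_z(\gamma_k)|\leq e^{a(\gamma_1)}.
\]
Here the factor $n$ compensates the $1/n!$ versus $1/(n-1)!$ normalisation. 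The condition is checked from the hypothesis of the corollary: a polymer $\gamma'$ meeting $\gamma$ contains some $y\in\gamma$, and by the lattice animal bound
\[
\sum_{\gamma'\ni y}|\sfw_z(\gamma')|e^{|\gamma'|}\leq C_\eps\sum_{m\ge1}e^{c(d)m}e^{-(2+c(d))m}e^{m}\leq C_\eps\sum_{m\geq 1} e^{-m}\leq C_\eps .
\]
Summing over $y\in\gamma$ gives at most $C_\eps|\gamma|\leq|\gamma|$ once $\eps$ is small.

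Third, I sum over $\gamma_1\ni x$ using the weight $|\sfw_z(\gamma_1)|e^{|\gamma_1|}$. The same computation gives at most $C_\eps\sum_m e^{c(d)m}e^{-(2+c(d))m}e^{m}\leq C_\eps$. I then set $c_\eps\coloneqq C_\eps$ (up to a constant), which tends to $0$ as $\eps\to0$. The bound is uniform in $N$ and in $A$, and multiplying by $|A|$ concludes.

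The main obstacle is the second step: Theorem~\ref{thm:cluster_expansion} as stated only provides the global $c_\eps|\T_N^L|$ bound, not the pinned one. So I must either cite the pinned Kotecký--Preiss bound in its standard form (e.g.\ from~\cite{velenik_book}) or reprove it. To reprove it, I would use the Penrose tree-graph inequality $|\varphi_n|\leq \frac{1}{n!}\sum_{T}\prod_{ij\in T}\ind{\gamma_i\cap\gamma_j\neq\emptyset}$ and sum leaves inductively from the root $\gamma_1$. The extra $e^{-|\gamma|}$ margin in the hypothesis, namely $2+c(d)$ instead of $1+c(d)$, is precisely what pays for the weights $e^{a(\gamma)}$ in this induction.
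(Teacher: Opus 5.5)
Your proof is correct, but it takes a different route from the paper's.

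\textbf{The paper's route.} The paper never uses a pinned estimate for this corollary. It averages over translations $\tau\in\T_N^L$, using that $\varphi_n$ and $\sfw_z$ are invariant under translating the whole family $(\gamma_1,\dots,\gamma_n)$. The sum over clusters meeting $A$ then equals $|\T_N^L|^{-1}$ times the sum over all clusters, each weighted by the number of $\tau$ with $(\tau+A)\cap(\gamma_1\cup\dots\cup\gamma_n)\neq\emptyset$. That number is at most $|A|\prod_k(1+|\gamma_k|)$. The modified activities $(1+|\gamma|)\sfw_z(\gamma)$ still satisfy the $1+c(d)$ hypothesis of Theorem~\ref{thm:cluster_expansion}, because the extra factor $e^{-|\gamma|}$ absorbs $1+|\gamma|$. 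The extensive bound $c_\eps|\T_N^L|$ from that theorem therefore gives $c_\eps|A|$ after dividing by $|\T_N^L|$.

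\textbf{What your route buys.} Your union bound over $x\in A$, followed by the pinned Kotecký--Preiss estimate, is the more standard argument, and it is more general in three ways.
\begin{enumerate}
\item It needs no translation invariance of the activities. The paper's averaging step requires this, and it is not among the corollary's hypotheses, although it holds for the torus activities to which the corollary is applied.
\item It gives an explicit constant, $c_\eps=C_\eps/(e-1)$.
\item With $a(\gamma)=|\gamma|/2$ in place of $|\gamma|$, it would already work under the weaker $1+c(d)$ decay assumption of Theorem~\ref{thm:cluster_expansion}. Your closing remark that the $2+c(d)$ margin is ``precisely'' what pays for $e^{a(\gamma)}$ is therefore tied to your particular choice of $a$.
\end{enumerate}
The paper's averaging trick, in exchange, derives the local bound from the extensive volume bound alone, treating Theorem~\ref{thm:cluster_expansion} as a black box.

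\textbf{The obstacle you flag.} It is not a real gap. The pinned bound $1+\sum_{n\ge 2} n\sum_{\gamma_2,\dots,\gamma_n}|\varphi_n(\gamma_1,\dots,\gamma_n)|\prod_{k\ge 2}|\sfw_z(\gamma_k)|\le e^{a(\gamma_1)}$ is exactly what the paper quotes from \cite[Theorem 5.4]{velenik_book} inside its own proof of Theorem~\ref{thm:cluster_expansion}. You can cite it directly rather than reprove it via the Penrose tree-graph inequality.
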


\begin{proof}[Proof of Theorem \ref{thm:cluster_expansion}]
The proof of the first statement consists in checking the criterion provided by \cite[Theorem 5.4]{velenik_book}.
Indeed, first consider $\eps$ small enough so that the inequality $C_\eps (1 - \exp( -1/2 ))^{-1} \leq 1/2$ holds. 
Further set $a(\gamma) \coloneqq  \tfrac{1}{2}|\gamma|$.
Then, for any $|z| < \eps$ and any $\gamma' \in \sfP^L_N$,
\begin{align}
\sum_{\gamma \in \Gamma^N_L }|\sfw_z(\gamma)|\ind{\gamma \cap \gamma' \neq \emptyset}\e^{a(\gamma)} 
&\leq \sum_{x \in \gamma'}\sum_{\gamma \ni x }|\sfw_z(\gamma)|\e^{a(\gamma)} \\
&\leq |\gamma'|\sum_{k=1}^{|\bbT_N^L|} C_{\eps} \e^{kc(d)} \e^{-k(1 + c(d))}\e^{k/2} \\
&\leq C_{\eps}|\gamma'| (1 - \exp( -1/2 )) ^{-1} \leq a(\gamma'). \label{eq:convergence_criterion}  
\end{align}
 The convergence criterion of \cite[Theorem 5.4]{velenik_book} is thus satisfied and the first item of the theorem is true. 
 Now, for the second statement, \cite[Theorem 5.4]{velenik_book} also implies that for any $\gamma_1 \in \mathsf{P}_N^L$
\[
\vert \sfw_z(\gamma_1) \vert \Big(1 + \sum\limits_{n \geq 2} n \sum\limits_{\gamma_2,\dots,\gamma_n \in \mathsf{P}_N^L} \big\vert\varphi_n(\gamma_1, \dots, \gamma_n)\big\vert \prod_{k = 2}^{n} \big\vert\sfw_z(\gamma_k) \big\vert \Big) \leq \vert \sfw_z(\gamma_1) \vert e^{a(\gamma_1)}.
\]
Summing over all $\gamma_1 \in \mathsf{P}_N^L$, we get
\[
\sum_{n\geq 1}\sum_{\gamma_1, \cdots, \gamma_n \in \mathsf{P}_N^L} n \big|\varphi_n(\gamma_1, \dots, \gamma_n)\big| \prod_{k = 1}^{n} \big|\sfw_z(\gamma_k)\big| \leq \sum\limits_{\gamma_1 \in \mathsf{P}_N^L} |\sfw_z(\gamma_1)| e^{a(\gamma_1)} . 
\]
Taking $\gamma' = \bbT_N^L$ in \eqref{eq:convergence_criterion}, then yields
\[
\sum_{n\geq 1}\sum_{\gamma_1, \cdots, \gamma_n \in \mathsf{P}_N^L} n \big|\varphi_n(\gamma_1, \dots, \gamma_n)\big| \prod_{k = 1}^{n} \big|\sfw_z(\gamma_k)\big| \leq C_{\eps}|\bbT_N^L|(1 - \exp( -1/2 ))^{-1},
\]
which concludes the proof.

\end{proof}

\begin{proof}[Proof of Corollary \ref{thm:cluster_expansion_inter_A}]
We use an ``averaging method'' to obtain the required bound. 
 First assume that $\eps$ is small enough so that the cluster expansion is convergent. 
 Fix $A \subset \T_N^L$.
 In that case, the quantity
 \[
 \sum_{n\geq 1}\sum_{\gamma_1, \cdots, \gamma_n \atop A \cap (\gamma_1 \cup \cdots \cup \gamma_n) \neq \emptyset} \varphi_n(\gamma_1, \dots, \gamma_n) \prod_{k = 1}^{n} \sfw_z(\gamma)
 \]
 is well-defined by the absolute convergence of the cluster expansion (by Theorem~\ref{thm:cluster_expansion}).
 The next step is to observe that both the Ursell functions $U$ and the activities $\sfw_z$ are invariant under translations of the polymer family. Thus, 
 \begin{multline*}
 \sum_{n\geq 1}\sum_{\gamma_1, \cdots, \gamma_n \atop A \cap (\gamma_1 \cup \cdots \cup \gamma_n) \neq \emptyset} \big\vert \varphi_n(\gamma_1, \dots, \gamma_n) \big\vert \prod_{k = 1}^{n} \big\vert \sfw_z(\gamma_k) \big\vert \\
= \frac{1}{|\T_N^L|} \sum_{\tau \in \T_N^L}   \sum_{n\geq 1}\sum_{\gamma_1, \cdots, \gamma_n \atop (\tau +A) \cap (\gamma_1 \cup \cdots \cup \gamma_n) \neq \emptyset} \big\vert \varphi_n(\gamma_1, \dots, \gamma_n) \big\vert \prod_{k = 1}^{n} \big\vert \sfw_z(\gamma_k) \big\vert.
 \end{multline*}
By rearranging the terms
\begin{multline*}
\sum_{n\geq 1}\sum_{\gamma_1, \cdots, \gamma_n \atop A \cap (\gamma_1 \cup \cdots \cup \gamma_n) \neq \emptyset} \big\vert \varphi_n(\gamma_1, \dots, \gamma_n) \big\vert \prod_{k = 1}^{n} \big\vert \sfw_z(\gamma_k) \big\vert\\
 = \frac{1}{|\T_N^L|} \sum_{n\geq 1}\sum_{\gamma_1, \cdots, \gamma_n} \big|\varphi_n(\gamma_1, \dots, \gamma_n)\big|  \sum_{\tau \in \T_N^L \atop (\tau +A) \cap (\gamma_1 \cup \cdots \cup \gamma_n) \neq \emptyset} \prod_{k = 1}^{n} \big|\sfw_z(\gamma_k)\big| .
\end{multline*}

 Observe that in the third sum of the right-hand side, at most $|A|\cdot |\gamma_1 \cup \cdots \cup \gamma_n|$ choices of $\tau$ may produce a non-zero term. 
 We use the bound $ |\gamma_1 \cup \cdots \cup \gamma_n | \leq \prod_{k=1}^n (1 + |\gamma_k|)$ to obtain:
 \begin{multline}
\sum_{n\geq 1}\sum_{\gamma_1, \cdots, \gamma_n \atop A \cap (\gamma_1 \cup \cdots \cup \gamma_n) \neq \emptyset} \big\vert \varphi_n(\gamma_1, \dots, \gamma_n) \big\vert \prod_{k = 1}^{n} \big\vert \sfw_z(\gamma_k) \big\vert \\
 \leq \frac{|A|}{|\bbT_N^L|} \sum_{n\geq 1}\sum_{\gamma_1, \cdots, \gamma_n} |\varphi_n(\gamma_1, \dots, \gamma_n)| \prod_{k = 1}^{n}(1+|\gamma_k|)\cdot |\sfw_z(\gamma_k)|. \label{ineq:averaged_bound}
 \end{multline}
Note that $\sum_{n\geq 1}\sum_{\gamma_1, \cdots, \gamma_n} \varphi_n(\gamma_1, \dots, \gamma_n) \prod_{k \geq 1}(1+|\gamma_k|)\cdot \sfw_z(\gamma_k)$ is the cluster expansion of a polymer partition function with modified activities $\tilde\sfw_z(\gamma) \coloneqq  (1+|\gamma|)\cdot \sfw_z(\gamma)$. 
Observe that  these modified activities satisfy the requirement of Theorem \ref{thm:cluster_expansion}. So, by the second point of this theorem, we obtain that, for all $N \in \bbN$,
\[
\sum_{n\geq 1}\sum_{\gamma_1, \cdots, \gamma_n \in \mathsf{P}_N^L} |\varphi_n(\gamma_1, \dots, \gamma_n)| \prod_{k = 1}^{n}(1+|\gamma_k|)\cdot |\sfw_z(\gamma_k)| \leq c_{\eps} |\bbT_N^L|,
\]
with $c_{\eps}$ that goes to $0$ as $\eps$ goes to $0$. Substituting this in \eqref{ineq:averaged_bound} gives
\[
\sum_{n\geq 1}\sum_{\gamma_1, \cdots, \gamma_n \atop A \cap (\gamma_1 \cup \cdots \cup \gamma_n) \neq \emptyset} \big\vert \varphi_n(\gamma_1, \dots, \gamma_n) \big\vert \prod_{k = 1}^{n} \big\vert \sfw_z(\gamma_k) \big\vert \leq c_{\eps} |A|,
\]
which concludes the proof.

\end{proof}

\end{document}